\documentclass[10pt,11pt]{amsart}%
\usepackage[dvips]{graphicx}
\usepackage{amsmath}
\usepackage{color}
\usepackage{amsfonts}
\usepackage{amssymb}%
\setcounter{MaxMatrixCols}{30}
\providecommand{\U}[1]{\protect\rule{.1in}{.1in}}
\providecommand{\U}[1]{\protect\rule{.1in}{.1in}}
\providecommand{\U}[1]{\protect\rule{.1in}{.1in}}
\providecommand{\U}[1]{\protect\rule{.1in}{.1in}}
\providecommand{\U}[1]{\protect\rule{.1in}{.1in}}
\providecommand{\U}[1]{\protect\rule{.1in}{.1in}}
\providecommand{\U}[1]{\protect\rule{.1in}{.1in}}
\textheight 220mm
\textwidth 160mm
\hoffset -20mm
\voffset -15mm

\newtheorem{theorem}{Theorem}[section]

\newtheorem{corollary}[theorem]{Corollary}

\newtheorem{definition}[theorem]{Definition}

\newtheorem{lemma}[theorem]{Lemma}

\newtheorem{proposition}[theorem]{Proposition}

\setcounter{tocdepth}{2}
\begin{document}
\title{Conditioned random walks from Kac-Moody root systems}
\date{December 20, 2013}
\author{C\'{e}dric Lecouvey, Emmanuel Lesigne and Marc Peign\'{e}}
\maketitle

\begin{abstract}
Random paths are time continuous interpolations of random walks. By using
Littelmann path model, we associate to each irreducible highest weight module
of a Kac Moody algebra ${\mathfrak{g}}$ a random path ${\mathcal{W}}.$ Under
suitable hypotheses, we make explicit the probability of the event $E$:
\textquotedblleft${\mathcal{W}}$ never exits the Weyl chamber of
${\mathfrak{g}}$\textquotedblright. We then give the law of the random walk
defined by ${\mathcal{W}}$ conditioned by the event $E$ and prove this law can
be recovered by applying to ${\mathcal{W}}$ a path transform of Pitman
type.\ This generalizes the main results of \cite{OC1} and \cite{LLP} to Kac
Moody root systems and arbitrary highest weight modules. Our approach here is
new and more algebraic that in \cite{OC1} and \cite{LLP}.\ We indeed fully
exploit the symmetry of our construction under the action of the Weyl group of
${\mathfrak{g}}$ which permits to avoid delicate generalizations of the
results of \cite{LLP} on renewal theory.

\end{abstract}

\section{Introduction}

The purpose of the paper is to study conditionings of random walks using
algebraic and combinatorial tools coming from representation theory of Lie
algebras and their infinite-dimensional generalizations (Kac-Moody algebras).
We extend in particular some results previously obtained in \cite{OC1},
\cite{OC2}, \cite{BBO}, \cite{LLP} and \cite{LLP2} to random paths in the
weight lattice of any Kac-Moody algebra ${\mathfrak{g}}$. To do this, we
consider a fixed ${\mathfrak{g}}$-module $V$ in the category ${\mathcal{O}%
}_{int}$ (a convenient generalization of the category of Lie algebras finite
dimensional representations). It decomposes as the direct sum of its weight
spaces, each such space being parametrized by a vector of the weight lattice
of ${\mathfrak{g}}$. The transitions of the random walk associated to $V$ are
then the weights of $V$.

The prototype of the results we obtain appears in the seminal paper \cite{OC1}
by O'Connell where it is shown that the law of the one-way simple random walk
$W$ in ${\mathbb{Z}}^{n}$ conditioned to stay in the cone ${\mathcal{C}%
}=\{(x_{1},\ldots,x_{n})\in{\mathbb{Z}}^{n}\mid x_{1}\geq\cdots\geq x_{n}%
\geq0\}$ and with drift in the interior $\mathring{{\mathcal{C}}}$ of
${\mathcal{C}}$, is the same as the law of a Markov chain $H$ obtained by
applying to $W$ a generalization of the Pitman transform.\ This transform is
defined via an insertion procedure on semistandard tableaux classically used
in representation theory of ${\mathfrak{sl}}_{n}({\mathbb{C}})$. The
transition matrix of $H$ can then be expressed in terms of the Weyl characters
(Schur functions) of the irreducible ${\mathfrak{sl}}_{n}({\mathbb{C}}%
)$-modules. Here the transitions of the random walk $W$ are the vectors of the
standard basis of ${\mathbb{Z}}^{n}$ which correspond to the weights of the
defining representation ${\mathbb{C}}^{n}$ of ${\mathfrak{sl}}_{n}%
({\mathbb{C)}}$. In addition to the insertion procedure on tableaux and some
classical facts about representation theory of ${\mathfrak{sl}}_{n}%
({\mathbb{C}})$, the main ingredients of O'Connell's result are a Theorem of
Doob on Martin boundary together with the asymptotic behavior of tensor
product multiplicities associated to the decompositions of $V^{\otimes\ell}$
in its irreducible components (which in this case are counted by standard skew tableaux).

We consider in \cite{LLP} more general random walks $W$ with transitions the
weights of a finite-dimensional irreducible ${\mathfrak{g}}$-module $V$ where
${\mathfrak{g}}$ is a Lie algebra.\ The law of $W$ is constructed so that the
probabilities of the paths only depend of their lengths and their ends.\ We
then show that the process $H$ obtained by applying to $W$ a generalization of
the Pitman transform introduced in \cite{BBO} is a Markov chain. When $V$ is a
minuscule representation (i.e. when the weights of $V$ belong to the same
orbit under the action of the Weyl group of ${\mathfrak{g}}$) and $W$ has
drift in the interior $\mathring{{\mathcal{C}}}$ of the cone ${\mathcal{C}}$
of dominant weights, we prove that $H$ has the same law as $W$ conditioned to
never exit ${\mathcal{C}}$. Similarly to the result of O'Connell, this common
law can be expressed in terms of the Weyl characters of the simple
${\mathfrak{g}}$-modules. Nevertheless the methods differ from \cite{OC1}
notably because there was no previously known asymptotic behavior for the
relevant tensor multiplicities in the more general cases we study. In fact, we
proceed by establishing a quotient renewal theorem for general random walks
conditioned to stay in a cone. When $W$ is not defined from a minuscule
representation, we also show that the law of $W$ conditioned to never exit
${\mathcal{C}}$ cannot coincide with that of $H$.

In \cite{LLP2}, we use the renewal theorem of \cite{LLP} and insertion
procedures on tableaux appearing in the representation theory of the Lie
superalgebras ${\mathfrak{gl}}(m,n)$ and ${\mathfrak{q}}(n)$ to extend the
results of \cite{OC1} to one way simple random walks conditioned to never exit
cones ${\mathcal{C}}^{\prime}$ for examples of cones ${\mathcal{C}}^{\prime}$
different from ${\mathcal{C}}$. \bigskip

In view of the results of \cite{LLP}, it is natural to ask whether the Markov
chain $H$ is related to a suitable conditioning of $W$ in the non minuscule
case. Also what can be said about the law of $W$ conditioned to never exit
${\mathcal{C}}$ ?\ In the sequel, we will answer both questions (partially for
the second) not only for random walks defined from representations of Lie
algebras but, more generally, for similar random walks with transitions the
weights of a highest weight module $V(\kappa)$ associated to a Kac-Moody
algebra ${\mathfrak{g}}$ of rank $n$.

\noindent By using Littelmann path model \cite{Lit2}, one can associate to
$V(\kappa)$ a countable set of piecewise continuous linear paths
$B(\pi_{\kappa})$ in the weight lattice $P\subset{\mathbb{R}}^{n}$ of
${\mathfrak{g}}$. These paths (called elementary in the sequel) are regarded
as functions $\pi:[0,1]\rightarrow{\mathbb{R}}^{n}$ such that $\pi(0)=0$ and
$\pi(1)\in P$. The weights of $V(\kappa)$ are then the elements $\pi(1),\pi\in
B(\pi_{\kappa}).$ The set $B(\pi_{\kappa})$ has the structure of a colored and
oriented graph isomorphic to the crystal graph of $V(\kappa)$ as defined by Kashiwara.

\noindent We use the crystal graph structure on $B(\pi_{\kappa})$ to endow it
as in \cite{LLP} with a probability density $p$. This yields a random variable
$X$ defined on $B(\pi_{\kappa})$ with probability distribution $p$.\ Let
$(X_{\ell})_{\ell\geq1}$ be a sequence of i.i.d. random variables with the
same law as $X$. We then define a continuous random path ${\mathcal{W}}$ such
that for any $t\geq0$, ${\mathcal{W}}(t)=X_{1}(1)+\cdots+X_{\ell-1}%
(1)+X_{\ell}(\ell-t)$ for any $t\in\lbrack\ell-1,\ell]$. The sequence
$W=(W_{\ell})_{\ell\geq0}$ defined by $W_{\ell}={\mathcal{W}}(\ell)$ is then a
random walk with transitions the weights of $V(\kappa)$ as considered in
\cite{LLP}.\ The main result of the paper is that, when $W$ has drift in
$\mathring{{\mathcal{C}}}$ (i.e. in the interior of the Weyl chamber of
${\mathfrak{g}}$), the law of its conditioning by the event $E=({\mathcal{W}%
}(t)\in{\mathcal{C}}$ for any $t\geq0)$ can be simply expressed in terms of
the Weyl-Kac characters. So the results of \cite{LLP} remain true for a
conditioning holding on the whole continuous trajectory (not only on its
discrete version at integer time). We also prove that the conditioned law so
obtained coincides with the law of the image of $W$ by the generalized Pitman
transform. When ${\mathfrak{g}}$ is finite-dimensional and $\kappa$ is
minuscule we recover in particular the main results of \cite{OC1} and
\cite{LLP}. On the representation theory side, our results also lead to
asymptotic behavior of tensor product multiplicities of Kac-Moody highest
weight modules.

\noindent Nevertheless our approach differ from that of \cite{LLP} since we do
not use any renewal theorem. Our strategy is more algebraic: we exploit the
symmetry of the representations with respect to the Weyl group ${\mathsf{W}}$
of ${\mathfrak{g}}$ and study simultaneously a family of random paths
${\mathcal{W}}^{w}$ indexed by the elements $w\in{\mathsf{W}}$. In particular
our proofs are independent of the results of \cite{OC1} and \cite{LLP}%
.\ \bigskip

The paper is organized as follows. In Section 2, we introduce the notions of
random walk and random path used in the paper. Section 3 recalls the necessary
background on Kac-Moody algebras and their representations and summarize some
important results on Littelmann's path model. The random path ${\mathcal{W}}$
and the random walk $W$ associated to $V(\kappa)$ are introduced in Section 4
together with the generalized Pitman transform and the Markov chain $H$. In
Section 5, we use a process of symmetrization to define the random paths
${\mathcal{W}}^{w},w\in{\mathsf{W}}$ from ${\mathcal{W=W}}^{1}$. This allows
us to give an explicit expression of the harmonic function $\mu\mapsto
{\mathbb{P}}_{\mu}({\mathcal{W}}(t){\mathcal{\in C}}$ for any $t\geq0)$ in
Section 6 and prove our main theorem. Its gives the probability that
${\mathcal{W}}$ starting at $\mu$ remains in ${\mathcal{C}}$. We also extend
it to the case of random walks defined from non irreducible representations of
simple Lie algebras.\ Finally Section 7 is devoted to additional results: we
give asymptotic behavior of tensor power multiplicities and also compare the
probabilities ${\mathbb{P}}_{\mu}({\mathcal{W}}(t){\mathcal{\in C}}$ for any
$t\geq0)$ and ${\mathbb{P}}_{\mu}(W_{\ell}\in{\mathcal{C}}$ for any $\ell
\geq0)$. \bigskip

\noindent\textbf{MSC classification:} 05E05, 05E10, 60G50, 60J10, 60J22.

\section{Random paths}

\subsection{Background on Markov chains}

\label{subsec-Markov} Consider a probability space $(\Omega,{\mathcal{F}%
},{\mathbb{P}})$ and a countable set $M$. A sequence $Y=(Y_{\ell})_{\ell\geq
0}$ of random variables defined on $\Omega$ with values in $M$ is a
\textit{Markov chain} when
\[
{\mathbb{P}}(Y_{\ell+1}=\mu_{\ell+1}\mid Y_{\ell}=\mu_{\ell},\ldots,Y_{0}%
=\mu_{0})={\mathbb{P}}(Y_{\ell+1}=\mu_{\ell+1}\mid Y_{\ell}=\mu_{\ell})
\]
for any any $\ell\geq0$ and any $\mu_{0},\ldots,\mu_{\ell},\mu_{\ell+1}\in M$.
The Markov chains considered in the sequel will also be assumed time
homogeneous, that is ${\mathbb{P}}(Y_{\ell+1}=\lambda\mid Y_{\ell}%
=\mu)={\mathbb{P}}(Y_{\ell}=\lambda\mid Y_{\ell-1}=\mu)$ for any $\ell\geq1$
and $\mu,\lambda\in M$.\ For all $\mu,\lambda$ in $M$, the transition
probability from $\mu$ to $\lambda$ is then defined by
\[
\Pi(\mu,\lambda)={\mathbb{P}}(Y_{\ell+1}=\lambda\mid Y_{\ell}=\mu)
\]
and we refer to $\Pi$ as the transition matrix of the Markov chain $Y$. The
distribution of $Y_{0}$ is called the initial distribution of the chain $Y$.

In the following, we will assume that $M$ is a subset of the euclidean space
${\mathbb{R}}^{n}$ for some $n\geq1$ and that the initial distribution of the
Markov chain $Y=(Y_{\ell})_{\ell\geq0}$ has full support, i.e. ${\mathbb{P}%
}(Y_{0}=\lambda)>0$ for any $\lambda\in M$. In \cite{LLP}, we have considered
a nonempty set ${\mathcal{C}}\subset M$ and an event $E\in{\mathcal{T}}$ such
that ${\mathbb{P}}(E\mid Y_{0}=\lambda)>0$ for all $\lambda\in{\mathcal{C}}$
and ${\mathbb{P}}(E\mid Y_{0}=\lambda)=0$ for all $\lambda\notin{\mathcal{C}}%
$; this implied that ${\mathbb{P}}(E)>0$, we could thus define the conditional
probability ${\mathbb{Q}}$ relative to this event: ${\mathbb{Q}}%
(\cdot):={\mathbb{P}}(\cdot|E)$. For example, we considered the event
$E:=(Y_{\ell}\in{\mathcal{C}}$ for any $\ell\geq0)$. In the present work we
will study more general situations, this involves to introduce some
generalities about continuous time Markov processes.

\textit{A continuous time Markov process} ${\mathcal{Y}}=({\mathcal{Y}%
}(t))_{t\geq0}$ on $(\Omega,{\mathcal{F}},{\mathbb{P}})$ with values in
${\mathbb{R}}^{n}$ is a family of random variables defined on $(\Omega
,{\mathcal{F}},{\mathbb{P}})$ such that, for any integer $k\geq1$, any $0\leq
t_{1}<\cdots<t_{k+1}$ and any Borel subsets $B_{1},\cdots,B_{k+1}$ of
${\mathbb{R}}^{n}$, one gets
\[
{\mathbb{P}}({\mathcal{Y}}(t_{k+1})\in B_{k+1}\mid Y(t_{1})\in B_{1}%
,Y(t_{2})\in B_{2},\cdots,Y(t_{k})\in B_{k})={\mathbb{P}}(Y(t_{k+1})\in
B_{k+1}\mid Y(t_{k})\in B_{k}).
\]
This is the Markov property, that we will use very often. In the following, we
shall need a more general version of this property which is a consequence of
the above. One can indeed show that for any $T\geq0$ and any Borel sets
$A\subset({\mathbb{R}}^{n})^{\otimes\lbrack0,T]},B\subset{\mathbb{R}}^{n}$ and
$C\subset({\mathbb{R}}^{n})^{\otimes\lbrack T,+\infty\lbrack}$, one gets
\[
{\mathbb{P}}(({\mathcal{Y}}(t))_{t\geq T}\in C\mid({\mathcal{Y}}(t))_{0\leq
t\leq T}\in A,{\mathcal{Y}}(T)\in B)={\mathbb{P}}(({\mathcal{Y}}(t))_{t\geq
T}\in C\mid{\mathcal{Y}}(T)\in B).
\]

In the sequel, we will assume the two following conditions.

\begin{enumerate}
\item For any integer $\ell\geq0$, one gets
\begin{equation}
Y_{\ell}:={\mathcal{Y}}(\ell)\in M\qquad{\mathbb{P}}{\mathrm{-almost\ surely}}
\label{discreteversion}%
\end{equation}
It readily follows that the sequence $Y=(Y_{\ell})_{\ell\geq0}$ is a
$M$-valued Markov chain.

\item For any $0\leq s\leq t$ and any Borel subsets $A,B\in{\mathbb{R}}^{n}$
\begin{equation}
{\mathbb{P}}({\mathcal{Y}}(t+1)\in B\mid{\mathcal{Y}}(s+1)\in A)={\mathbb{P}%
}({\mathcal{Y}}(t)\in B\mid{\mathcal{Y}}(s)\in A).
\label{invariancetranslation}%
\end{equation}
Combining this condition with the Markov property, one checks that for any
$T\geq1$ and $x\in{\mathbb{R}}^{n}$, the conditional distribution of the
process $({\mathcal{Y}}(t+1))_{t\geq T}$ with respect to the event
$({\mathcal{Y}}(T+1)=x)$ is equal to the one of $({\mathcal{Y}}(t))_{t\geq T}$
with respect to $({\mathcal{Y}}(T)=x)$.
\end{enumerate}

In the following, we will assume that the initial distribution of the Markov
process $({\mathcal{Y}}(t))_{t\geq0}$ has full support, i.e. ${\mathbb{P}%
}({\mathcal{Y}}(0)=\lambda)>0$ for any $\lambda\in M$. We will also consider a
nonempty set ${\mathcal{C}}\subset{\mathbb{R}}^{n}$ and will assume that the
probability of the event $E:=({\mathcal{Y}}(t)\in{\mathcal{C}}%
\ {\mathrm{for\ any\ }}t\geq0)$ is positive; the conditional probability
${\mathbb{Q}}$ relative to $E$ is thus well defined. The following proposition
can be deduced from our hypotheses and the Markov property of $Y$. We postpone
its proof to the appendix.

\begin{proposition}
\label{Prop_Q}Let $({\mathcal{Y}}(t))_{t\geq0}$ be a continuous time Markov
process with values in ${\mathbb{R}}^{n}$ satisfying conditions
(\ref{discreteversion}) and (\ref{invariancetranslation}) and ${\mathcal{C}%
}\subset{\mathbb{R}}^{n}$ such that the event $E:=({\mathcal{Y}}%
(t)\in{\mathcal{C}}\ {\mathrm{for\ any\ }}t\geq0)$ has positive probability
measure. Then, under the probability ${\mathbb{Q}}(\cdot)={\mathbb{P}}%
(\cdot|E)$, the sequence $(Y_{\ell})_{\ell\geq0}$ is still a Markov chain with
values in ${\mathcal{C}}\cap M$ and transition probabilities given by
\begin{equation}
\forall\mu,\lambda\in{\mathcal{C}}\cap M\quad{\mathbb{Q}}(Y_{\ell+1}%
=\lambda\mid Y_{\ell}=\mu)=\Pi^{E}(\mu,\lambda)\frac{{\mathbb{P}}(E\mid
Y_{0}=\lambda)}{{\mathbb{P}}(E\mid Y_{0}=\mu)} \label{reco}%
\end{equation}
where $\Pi^{E}(\mu,\lambda)={\mathbb{P}}(Y_{\ell+1}=\lambda,{\mathcal{Y}%
}(t)\in{\mathcal{C}}$ for $t\in\lbrack\ell,\ell+1]\mid Y_{\ell}=\mu)$. We will
denote by $Y^{E}$ this Markov chain
\end{proposition}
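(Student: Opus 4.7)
The plan is to follow the standard Doob $h$-transform argument, adapted to the continuous-time setting via the extended Markov property stated in the excerpt. For fixed $\ell \ge 0$ and $\mu_0,\dots,\mu_\ell,\lambda \in \mathcal{C}\cap M$, I would decompose the event $E$ at the times $\ell$ and $\ell+1$ as $E = E^{\text{past}}\cap E^{\text{mid}}\cap E^{\text{fut}}$, where
\[
E^{\text{past}} = \{\mathcal{Y}(t)\in\mathcal{C}:\, t\in[0,\ell]\},\quad E^{\text{mid}} = \{\mathcal{Y}(t)\in\mathcal{C}:\, t\in[\ell,\ell+1]\},\quad E^{\text{fut}} = \{\mathcal{Y}(t)\in\mathcal{C}:\, t\ge \ell+1\}.
\]
Then I would write
\[
\mathbb{Q}(Y_{\ell+1}=\lambda \mid Y_\ell=\mu_\ell,\dots,Y_0=\mu_0) = \frac{\mathbb{P}(E,\, Y_{\ell+1}=\lambda,\, Y_\ell=\mu_\ell,\dots,Y_0=\mu_0)}{\mathbb{P}(E,\, Y_\ell=\mu_\ell,\dots,Y_0=\mu_0)}
\]
and treat the numerator and denominator by the same scheme.

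For the numerator, I would first condition at time $\ell+1$: since $E^{\text{past}}\cap E^{\text{mid}}$ together with $(Y_\ell,\dots,Y_0)$ is measurable with respect to the past $\sigma$-algebra up to time $\ell+1$, the extended Markov property (the displayed consequence of (\ref{invariancetranslation})) gives
\[
\mathbb{P}(E^{\text{fut}}\mid E^{\text{past}},E^{\text{mid}},Y_{\ell+1}=\lambda,Y_\ell,\dots,Y_0) = \mathbb{P}(E^{\text{fut}}\mid Y_{\ell+1}=\lambda),
\]
and then I would use translation invariance (\ref{invariancetranslation}) — or rather its consequence on the conditional distributions of $(\mathcal{Y}(t+1))_{t\ge 0}$ versus $(\mathcal{Y}(t))_{t\ge 0}$ — to identify this with $\mathbb{P}(E\mid Y_0=\lambda)$. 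Next I would condition at time $\ell$: $E^{\text{past}}$ with the finite-dimensional data $(Y_\ell,\dots,Y_0)$ sits in the past $\sigma$-algebra up to $\ell$, so the Markov property gives
\[
\mathbb{P}(Y_{\ell+1}=\lambda,E^{\text{mid}}\mid E^{\text{past}},Y_\ell=\mu_\ell,\dots,Y_0=\mu_0) = \mathbb{P}(Y_{\ell+1}=\lambda,E^{\text{mid}}\mid Y_\ell=\mu_\ell),
\]
and this right-hand side is the definition of $\Pi^E(\mu_\ell,\lambda)$. Combining, the numerator factors as $\mathbb{P}(E^{\text{past}},Y_\ell=\mu_\ell,\dots,Y_0=\mu_0)\cdot \Pi^E(\mu_\ell,\lambda)\cdot \mathbb{P}(E\mid Y_0=\lambda)$.

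The denominator is treated analogously: the Markov property at time $\ell$ together with translation invariance yields
\[
\mathbb{P}(E^{\text{mid}}\cap E^{\text{fut}}\mid Y_\ell=\mu_\ell,\dots,Y_0=\mu_0) = \mathbb{P}(E\mid Y_0=\mu_\ell),
\]
so the denominator equals $\mathbb{P}(E^{\text{past}},Y_\ell=\mu_\ell,\dots,Y_0=\mu_0)\cdot \mathbb{P}(E\mid Y_0=\mu_\ell)$. The past factor cancels in the ratio, and the resulting expression depends on the conditioning data only through $\mu_\ell$, which simultaneously establishes the Markov property of $(Y_\ell)_{\ell\ge 0}$ under $\mathbb{Q}$ and produces formula (\ref{reco}).

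The main obstacle is the careful handling of the continuous-time Markov property when the event $E^{\text{past}}$ involves uncountably many times rather than just the integer samples $Y_0,\dots,Y_\ell$: the discrete Markov property of $(Y_\ell)$ alone is not sufficient, and one must invoke the extended version of the Markov property explicitly recorded in the excerpt (with $A,B,C$ Borel subsets of path space). A subsidiary concern is justifying the identification $\mathbb{P}(E^{\text{fut}}\mid Y_{\ell+1}=\lambda)=\mathbb{P}(E\mid Y_0=\lambda)$ from (\ref{invariancetranslation}), which a priori is phrased for single-time marginals; but the text already states the required consequence on the conditional distributions of the shifted process, so this step is available to us.
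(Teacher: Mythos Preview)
Your proposal is correct and follows essentially the same approach as the paper's proof in the Appendix: the paper also writes the conditional $\mathbb{Q}$-probability as a ratio $N_\ell/D_\ell$, applies the extended Markov property first at time $\ell+1$ (using translation invariance to identify the future factor with $\mathbb{P}(E\mid Y_0=\lambda)$) and then at time $\ell$ (to extract $\Pi^E(\mu_\ell,\lambda)$), and cancels the common past factor. Your explicit naming of $E^{\text{past}},E^{\text{mid}},E^{\text{fut}}$ and your remarks on the need for the path-space version of the Markov property are exactly the points the paper is using implicitly.
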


To simplify the notations we will denote by ${\mathcal{C}}$ the set
${\mathcal{C}}\cap M$ as soon as we will consider the Markov chain $(Y_{\ell
})_{\ell\geq0}$ and $\Pi^{E}=(\Pi(\mu,\lambda))_{\mu,\lambda\in{\mathcal{C}}}$
the \textquotedblleft restriction\textquotedblright\ of the transition matrix
$\Pi$ to the event $E$ where
\[
\Pi^{E}(\mu,\lambda)={\mathbb{P}}(Y_{\ell+1}=\lambda,{\mathcal{Y}}%
(t)\in{\mathcal{C}}{\text{ for }}t\in\lbrack\ell,\ell+1]\mid Y_{\ell}=\mu).
\]
So $\Pi^{E}(\mu,\lambda)$ gives the probability of the transition from $\mu$
to $\lambda$ when ${\mathcal{Y}}(t)$ remains in ${\mathcal{C}}$ for
$t\in\lbrack\ell,\ell+1]$. \bigskip

A \emph{substochastic matrix} on the countable set $M$ is a map $\Pi:M\times
M\rightarrow\lbrack0,1]$ such that $\sum_{y\in M}\Pi(x,y)\leq1$ for any $x\in
M.\;$If $\Pi,\Pi^{\prime}$ are substochastic matrices on $M$, we define their
product $\Pi\times\Pi^{\prime}$ as the substochastic matrix given by the
ordinary product of matrices:
\[
\Pi\times\Pi^{\prime}(x,y)=\sum_{z\in M}\Pi(x,z)\Pi^{\prime}(z,y).
\]

A function $h:M\rightarrow{\mathbb{R}}$ is \emph{harmonic} for the
substochastic transition matrix $\Pi$ when we have $\sum_{y\in M}%
\Pi(x,y)h(y)=h(x)$ for any $x\in M$. Consider a (strictly) positive harmonic
function $h$. We can then define the Doob transform of $\Pi$ by $h$ (also
called the $h$-transform of $\Pi$) setting
\[
\Pi_{h}(x,y)=\frac{h(y)}{h(x)}\Pi(x,y).
\]
We then have $\sum_{y\in M}\Pi_{h}(x,y)=1$ for any $x\in M.\;$Thus $\Pi_{h}$
is stochastic and can be interpreted as the transition matrix for a certain
Markov chain.

An example is given in formula (\ref{reco}): the state space is now
${\mathcal{C}}$, the substochastic matrix is $\Pi^{E}$ and the harmonic
function is $h_{E}(\mu):={\mathbb{P}}(E\mid Y_{0}=\mu)$; the transition matrix
$\Pi_{h_{E}}^{E}$ is the transition matrix of the Markov chain $Y^{E}$.

\subsection{Elementary random paths}

\label{subsec-ele_path} Consider a ${\mathbb{Z}}$-lattice $P$ with finite rank
$d$. Set $P_{{\mathbb{R}}}=P\otimes_{{\mathbb{Z}}}{\mathbb{R}}$ so that $P$
can be regarded as a ${\mathbb{Z}}$-lattice of rank $d$ in ${\mathbb{R}}^{d}%
$.\ An \emph{elementary path} is a piecewise continuous linear map
$\pi:[0,1]\rightarrow P_{{\mathbb{R}}}$ such that $\pi(0)=0$ and $\pi(1)\in
P$.\ Two paths $\pi_{1}$ and $\pi_{2}$ are considered as identical if there
exists a piecewise, surjective continuous and nondecreasing map
$u:[0,1]\rightarrow\lbrack0,1]$ such that $\pi_{2}=\pi_{1}\circ u$.

The set ${\mathcal{F}}$ of continuous functions from $[0,1]$ to
$P_{{\mathbb{R}}}$ is equipped with the norm $\left\Vert \cdot{}\right\Vert
_{\infty}$ of uniform convergence : for any $\pi\in{\mathcal{F}}$, on has
$\left\Vert \pi\right\Vert :=\sup_{t\in\lbrack0,1]}\left\Vert \pi
(t)\right\Vert _{2}$ where $\left\Vert \cdot{}\right\Vert _{2}$ denotes the
euclidean norm on ${\mathbb{R}}^{d}$. Let $B$ be a \emph{countable set of
paths} and fix a probability distribution $p=(p_{\pi})_{\pi\in B}$ on $B$ such
that $p_{\pi}>0$ for any $\pi\in B$. Let $X$ be a random variable defined on a
probability space $(\Omega,{\mathcal{F}},{\mathbb{P}})$ and with distribution
$p$ (in other words ${\mathbb{P}}(X=\pi)=p_{\pi}\ {\text{for any }}\pi\in B).$
{The variable $X$ admits a moment of order $1$ (namely ${\mathbb{E}}(\Vert
X\Vert)<+\infty$) when the series of functions $\sum_{\pi}p_{\pi}\left\Vert
\pi\right\Vert $ converges on $[0,1]$. We then set%
\[
m:={\mathbb{E}}(X)=\sum_{\pi\in B}p_{\pi}\pi.
\]
}

The concatenation $\pi_{1}\ast\pi_{2}$ of two elementary paths $\pi_{1}$ and
$\pi_{2}$ is defined by
\[
\pi_{1}\ast\pi_{2}(t)=\left\{
\begin{array}
[c]{lll}%
\pi_{1}(2t) & {\text{ for }} & t\in\lbrack0,\frac{1}{2}],\\
\pi_{1}(1)+\pi_{2}(2t-1) & {\text{ for }} & t\in\lbrack\frac{1}{2},1].
\end{array}
\right.
\]

In the sequel, ${\mathcal{C}}$ is a closed convex cone in $P_{{\mathbb{R}}}$
with interior $\mathring{{\mathcal{C}}}$ and we set $P_{+}={\mathcal{C}}\cap
P$.

\subsection{Random paths}

\label{subsec_RandPath}Let $B$ be a set of elementary paths and $(X_{\ell
})_{\ell\geq1}$ a sequence of i.i.d. random variables with law $X$ where $X$
is the random variable with values in $B$ introduced in \ref{subsec-ele_path}.
We define the random process ${\mathcal{W}}$ as follows: for any $\ell
\in{\mathbb{Z}}_{>0}$ and $t\in\lbrack\ell,\ell+1]$
\[
{\mathcal{W}}(t):=X_{1}(1)+X_{2}(1)+\cdots+X_{\ell-1}(1)+X_{\ell}(t-\ell).
\]
The sequence of random variables $W=(W_{\ell})_{\ell\geq0}:=({\mathcal{W}%
}(\ell))_{\ell\geq0}$ is a random walk with set of increments $I:=\{\pi
(1)\mid\pi\in B\}$.

For any $\ell\geq1$, let $\psi_{\ell}$ be the map defined by
\[
\forall\mu\in{\mathcal{C}}\quad\psi_{\ell}(\mu)={\mathbb{P}}_{\mu
}({\mathcal{W}}(t)\in{\mathcal{C}}\ {\text{ for any }}t\in\lbrack0,\ell])
\]
so that $\psi_{\ell}(\mu)$ is the probability that ${\mathcal{W}}$ starting at
$\mu$ remains in ${\mathcal{C}}$ for any $t\in\lbrack0,\ell]$.\ As
$\ell\rightarrow+\infty$, the sequence of functions $(\psi_{\ell})_{\ell\geq
0}$ converges to the function $\psi$ defined by
\[
\forall\mu\in{\mathcal{C}}\quad\psi_{{}}(\mu)={\mathbb{P}}_{\mu}({\mathcal{W}%
}(t)\in{\mathcal{C}}\ {\text{ for any }}t\geq0).
\]

\begin{proposition}
{\label{Prop_lim}Assume ${\mathbb{E}}(\Vert X\Vert)<+\infty$ and
$m(1)\notin\mathring{\mathcal{C}}$. Then for any $\mu\in{\mathcal{C}}$, we
have $\psi(\mu)=0$.}
\end{proposition}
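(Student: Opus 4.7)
My strategy is to project the multi-dimensional problem onto a line via a supporting linear functional and then invoke classical one-dimensional random walk theory. Since $\mathcal{C}$ is a closed convex cone and $m(1)\notin\mathring{\mathcal{C}}$, the Hahn--Banach separation theorem yields a nonzero linear functional $\phi$ on $P_{\mathbb{R}}$ with $\phi\geq 0$ on $\mathcal{C}$ and $\phi(m(1))\leq 0$. The real sequence $S_{\ell}:=\sum_{i=1}^{\ell}\phi(X_{i}(1))$ is then a random walk on $\mathbb{R}$ whose i.i.d.\ increments are integrable (using $\mathbb{E}(\Vert X\Vert)<+\infty$ together with the continuity of $\phi$) and have common mean $\phi(m(1))\leq 0$. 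Since the event $\{\mathcal{W}(t)\in\mathcal{C}\text{ for all }t\geq 0\}$ is contained in $\{\phi(\mathcal{W}(t))\geq 0\text{ for all }t\geq 0\}$, it is enough to produce, $\mathbb{P}_\mu$-almost surely, a negative excursion of the one-dimensional process $\phi\circ\mathcal{W}$.

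When $\phi(m(1))<0$, the strong law of large numbers applied to $S_\ell$ gives $S_{\ell}/\ell\to\phi(m(1))<0$ almost surely, hence $\phi(W_{\ell})\to-\infty$, so $W_{\ell}$ leaves $\mathcal{C}$ for all $\ell$ sufficiently large and $\psi(\mu)=0$. In the critical case $\phi(m(1))=0$, I would invoke the Chung--Fuchs recurrence theorem: a mean-zero integrable real random walk is either degenerate (increments almost surely $0$) or recurrent, with $\liminf_{\ell}S_{\ell}=-\infty$ almost surely; the recurrent alternative yields $W_{\ell}\notin\mathcal{C}$ infinitely often and we are done.

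The technical hurdle, and what I expect to be the main obstacle, is the fully degenerate sub-case in which $\phi(X(1))=0$ almost surely. Here $W_{\ell}$ is trapped on the affine hyperplane $\{\phi=\phi(\mu)\}$ and $\phi$ can no longer detect an exit at integer times. I would split this sub-case once more: if some elementary path $\pi$ with $p_{\pi}>0$ satisfies $\phi(\pi(s))<0$ for some intermediate $s\in(0,1)$, then at every step the continuous path $\mathcal{W}$ dips below its base level with a fixed positive probability, and independence of the steps combined with a Borel--Cantelli argument forces such a dip to occur almost surely, giving exit from $\mathcal{C}$ when $\phi(\mu)=0$; otherwise every path of positive probability lies entirely in $\ker\phi$ throughout $[0,1]$, so $\mathcal{W}$ evolves in a strictly lower-dimensional affine subspace and one iterates the argument in the reduced cone $\mathcal{C}\cap(\mu+\ker\phi)$, closing the proof by induction on the rank of $P$, the one-dimensional case being immediate.
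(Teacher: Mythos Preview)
The paper's argument is much shorter than yours: it simply bounds $\psi(\mu)\le\mathbb{P}_\mu(W_\ell\in\mathcal{C}\text{ for all }\ell\ge 0)$ by restricting to integer times, and then invokes the strong law of large numbers for the discrete walk $W$, referring to \cite{LLP} for the details. Your route---project onto a separating functional and reduce to one-dimensional random walk theory---is a natural way to flesh out that citation, and your treatment of the two principal cases ($\phi(m(1))<0$ via the SLLN; $\phi(m(1))=0$ with nondegenerate projected increments via Chung--Fuchs recurrence) is sound and in the same spirit as the paper's reference.

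The degenerate sub-case, however, has a genuine gap. First, your Borel--Cantelli step only produces times at which $\phi\circ\mathcal{W}$ dips below its \emph{starting level} $\phi(\mu)$, not below $0$; when $\phi(\mu)>0$ this does not force exit from $\{\phi\ge 0\}$, and you explicitly restrict your conclusion to the case $\phi(\mu)=0$ without covering $\phi(\mu)>0$. Second, your dichotomy is incomplete: the negation of ``every path lies entirely in $\ker\phi$ on $[0,1]$'' is not ``some path satisfies $\phi(\pi(s))<0$''; a path may stay in $\{\phi\ge 0\}$ throughout yet leave $\ker\phi$, and then neither branch applies (the continuous trajectory does not evolve in $\mu+\ker\phi$, so the induction cannot be set up). In fact the statement can fail at this level of generality: with $\mathcal{C}$ the closed positive quadrant in $\mathbb{R}^2$, $B=\{\pi\}$, $\pi(t)=(0,t)$, and $\mu=(1,0)$, one has $m(1)=(0,1)\in\partial\mathcal{C}$ but $\psi(\mu)=1$; your one-dimensional ``base case'' is not immediate for the same reason. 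The paper sidesteps all of this by leaning on \cite{LLP}, where the relevant increments carry additional structure (in the applications of the present paper, Weyl-group symmetry of the weights) that excludes such degeneracies.
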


\begin{proof}
Observe that $\psi(\mu)={\mathbb{P}}_{\mu}({\mathcal{W}}(t)\in{\mathcal{C}}$
for any $t\geq0)\leq{\mathbb{P}}_{\mu}(W_{\ell}\in{\mathcal{C}}$ for any
$\ell\geq0)$.\ By a straightforward application of the strong law of large
numbers for the random walk $W$ (see \cite{LLP} for more details), we have
${\mathbb{P}}_{\mu}(W_{\ell}\in{\mathcal{C}}$ for any $\ell\geq0)=0$ when
$m(1)\notin\mathring{\mathcal{C}}$.\ Thus $\psi(\mu)=0$ when $m(1)\notin
\mathring{\mathcal{C}}$.
\end{proof}

\bigskip

\noindent\textbf{Remark:} The hypothesis $\mathbb{E}(\left\Vert X\right\Vert
)<+\infty$ suffices in fact to prove also that {$\psi(\mu)>0$ when
$m(1)\in\mathring{\mathcal{C}}$ }and there exists at least $\pi\in B$ such
that $\operatorname{Im}\pi\subset{\mathcal{C}}$. In the context of the paper,
this will readily follows from Theorem \ref{Th_PSi} so we do not pursue in
this direction.

\section{Representations of symmetrizable Kac-Moody algebras}

\subsection{Symmetrizable Kac-Moody algebras}

Let $A=(a_{i,j})$ be a $n\times n$ generalized Cartan matrix of rank
$r$.\ This means that the entries $a_{i,j}\in{\mathbb{Z}}$ satisfy the
following conditions

\begin{enumerate}
\item $a_{i,j}\in{\mathbb{Z}}$ for $i,j\in\{1,\ldots,n\},$

\item $a_{i,i}=2$ for $i\in\{1,\ldots,n\},$

\item $a_{i,j}=0$ if and only if $a_{j,i}=0$ for $i,j\in\{1,\ldots,n\}$.
\end{enumerate}

We will also assume that $A$ is \emph{indecomposable}: given subsets $I$ and
$J$ of $\{1,\ldots,n\}$, there exists $(i,j)\in I\times J$ such that
$a_{i,j}\not =0$.\ We refer to \cite{KacB} for the classification of
indecomposable generalized Cartan matrices.\ Recall there exist only three
kinds of such matrices: when all the principal minors of $A$ are positive, $A$
is of \emph{finite type} and corresponds to the Cartan matrix of a simple Lie
algebra over ${\mathbb{C}}$; when all the proper principal minors of $A$ are
positive and $\det(A)=0$ the matrix $A$ is said of\emph{ affine type};
otherwise $A$ is of \emph{indefinite type}. For technical reasons, from now
on, we will restrict ourselves to \emph{symmetrizable} generalized Cartan
matrices i.e. we will assume there exists a diagonal matrix $D$ with entries
in ${\mathbb{Z}}_{>0}$ such that $DA$ is symmetric.

The root and weight lattices associated to a generalized symmetrizable Cartan
matrix are defined by mimic the construction for the Lie algebras.\ Let
$P^{\vee}$ be a free abelian group of rank $2n-r$ with ${\mathbb{Z}}$-basis
$\{h_{1},\ldots,h_{n}\}\cup\{d_{1},\ldots,d_{n-r}\}$.\ Set ${\mathfrak{h}%
}:=P^{\vee}\otimes_{{\mathbb{Z}}}{\mathbb{C}}$ and ${\mathfrak{h}%
}_{{\mathbb{R}}}:=P^{\vee}\otimes_{{\mathbb{Z}}}{\mathbb{R}}$. The weight
lattice $P$ is then defined by
\[
P:=\{\gamma\in{\mathfrak{h}}^{\ast}\mid\gamma(P^{\vee})\subset{\mathbb{Z\}}%
}{\text{.}}
\]
Set $\Pi^{\vee}:=\{h_{1},\ldots,h_{n}\}.\;$One can then choose a set
$\Pi:=\{\alpha_{1},\ldots,\alpha_{n}\}$ of linearly independent vectors in
$P\subset{\mathfrak{h}}^{\ast}$ such that $\alpha_{i}(h_{j})=a_{i,j}$ for
$i,j\in\{1,\ldots,n\}$ and $\alpha_{i}(d_{j})\in\{0,1\}$ for $i\in
\{1,\ldots,n-r\}$.\ The elements of $\Pi$ are the \emph{simple roots}.\ The
free abelian group $Q:=\bigoplus_{i=1}^{n}{\mathbb{Z}}\alpha_{i}$ is the
\emph{root lattice}.\ The quintuple $(A,\Pi,\Pi^{\vee},P,P^{\vee})$ is called
a \emph{generalized Cartan datum} associated to the matrix $A$.\ For any
$i=1,\ldots,n$, we also define the\emph{ fundamental weight} $\omega_{i}\in P$
by $\omega_{i}(h_{j})=\delta_{i,j}$ for $j\in\{1,\ldots,n\}$ and $\omega
_{i}(d_{j})=0$ for $j\in\{1,\ldots,n-r\}$.\ 

For any $i=1,\ldots,n$, we define the simple reflection $s_{i}$ on
${\mathfrak{h}}^{\ast}$ by
\begin{equation}
s_{i}(\gamma)=\gamma-h_{i}(\gamma)\alpha_{i}\ {\text{ for any }}\gamma\in
P{\text{.}} \label{defSi}%
\end{equation}
The \emph{Weyl group} ${\mathsf{W}}$ is the subgroup of $GL({\mathfrak{h}%
}^{\ast})$ generated by the reflections $s_{i}$.\ Each element $w\in
{\mathsf{W}}$ admits a reduced expression $w=s_{i_{1}}\cdots s_{i_{r}}$.\ One
can prove that $r$ is independent of the reduced expression considered so the
signature $\varepsilon(w)=(-1)^{r}$ is well-defined.

\begin{definition}
The \emph{Kac-Moody algebra} ${\mathfrak{g}}$ associated to the quintuple
$(A,\Pi,\Pi^{\vee},P,P^{\vee})$ is the ${\mathbb{C}}$-algebra generated by the
elements $e_{i},f_{i},$ $i=1,\ldots,n$ and $h\in P$ together with the relations

\begin{enumerate}
\item $[h,h^{\prime}]=0$ for any $h,h^{\prime}\in P$,

\item $[h,e_{i}]=\alpha_{i}(h)e_{i}$ for any $i=1,\ldots,n$ and $h\in P$,

\item $[h,f_{i}]=-\alpha_{i}(h)f_{i}$ for any $i=1,\ldots,n$ and $h\in P$,

\item $[e_{i},f_{j}]=\delta_{i,j}h_{i}$ for any $i,i=1,\ldots,n$,

\item $ad(e_{i})^{1-a_{i,j}}(e_{j})=0$ for any $i,j=1,\ldots,n$ such that
$i\neq j$,

\item $ad(f_{i})^{1-a_{i,j}}(f_{j})=0$ for any $i,j=1,\ldots,n$ such that
$i\neq j$,

where $ad(a)\in End{\mathfrak{g}}$ is defined by $ad(a)(b)=[a,b]:=ab-ba$ for
any $a,b\in{\mathfrak{g}}$.
\end{enumerate}
\end{definition}

Denote by ${\mathfrak{g}}_{+}$ and ${\mathfrak{g}}_{-}$ the subalgebras of
${\mathfrak{g}}$ generated by the $e_{i}$'s and the $f_{i}$'s,
respectively.\ We have the triangular decomposition ${\mathfrak{g=g}}%
_{+}\oplus{\mathfrak{h}}\oplus{\mathfrak{g}}_{-}$ and ${\mathfrak{h}}$ is
called the Cartan subalgebra of ${\mathfrak{g}}$.\ For any $\alpha\in Q$, set
\[
{\mathfrak{g}}_{\alpha}:=\{x\in{\mathfrak{g}}\mid\lbrack h,x]=\alpha
(h)x{\text{ for any }}h\in{\mathfrak{h}}\}.
\]
The algebra ${\mathfrak{g}}$ then decomposes on the form
\[
{\mathfrak{g=}}\bigoplus_{\alpha\in Q}{\mathfrak{g}}_{\alpha}
\]
where $\operatorname{dim}{\mathfrak{g}}_{\alpha}$ is finite for any $\alpha\in
Q$.\ The roots of ${\mathfrak{g}}$ are the nonzero elements $\alpha\in Q$ such
that ${\mathfrak{g}}_{\alpha}\neq\{0\}$.\ We denote by $R$ the set of roots of
${\mathfrak{g}}$. Set $Q_{+}:=\bigoplus_{i=1}^{n}{\mathbb{Z}}_{\geq0}%
\alpha_{i},$ $R_{+}:=R\cap Q_{+}$ and $R_{-}=R\cap(-Q_{+})$. Then one can
prove that $R=R_{+}\cup R_{-}$ and $R_{-}=-R_{+}$ as for the finite
dimensional Lie algebras.\ For any $\gamma=\sum_{i=1}^{n}a_{i}\alpha_{i}\in
Q_{+}$, we set
\[
ht(\gamma):=\sum_{i=1}^{n}a_{i}.
\]
We have the decomposition
\[
{\mathfrak{g=}}\bigoplus_{\alpha\in R_{+}}{\mathfrak{g}}_{\alpha}%
\oplus{\mathfrak{h}}\oplus\bigoplus_{\alpha\in R_{-}}{\mathfrak{g}}_{\alpha}.
\]
For any $\alpha\in R_{+}$, we set $\operatorname{dim}{\mathfrak{g}}_{\alpha
}=m_{\alpha}$ the multiplicity of the root $\alpha$ in ${\mathfrak{g}}$.\ The
set $R_{+}$ is infinite as soon as $A$ is not of finite type; the multiplicity
$m_{\alpha}$ may be greater than $1$ but is always bounded as follows (see
\cite{KacB} \S \ 1.3):
\begin{equation}
m_{\alpha}\leq n^{ht(\alpha)}\ {\text{ for any }}\alpha\in R_{+}.
\label{mag_mult}%
\end{equation}
When $A$ is not of finite type, the Weyl group ${\mathsf{W}}$ is also infinite
and there exist roots $\alpha\in R$ which do not belong to any orbit
${\mathsf{W}}\alpha_{i},i=1,\ldots,n$ of a simple root; these roots are called
\emph{imaginary roots} in contrast to \textit{real roots} which belong to the
orbit of a simple root $\alpha_{i}$.

The root system associated to a matrix $A$ of finite type is well known (see
for instance \cite{Bour}) and are classified in four infinite series
($A_{n},B_{n},C_{n}$ and $D_{n}$) and five exceptional systems ($E_{6}%
,E_{7},E_{8},F_{4},G_{2}$). In contrast, few is known on the root system
associated to a matrix of indefinite type. In the intermediate case of the
affine matrices, there also exists a finite classification which makes appear
seven infinite series and seven exceptional systems. The root system can be
described as follows. First, the rows and columns of $A$ can be ordered such
that the submatrix $A^{\circ}$ of size $(n-1)\times(n-1)$ obtained by deleting
the row and column indexed by $n$ in $A$ is the Cartan matrix of a finite root
system $R^{\circ}$. The kernel of $A$ has dimension 1; more precisely, there
exists a unique $n$-tuple $(a_{1},\ldots,a_{n})$ of positive relatively prime
integers such that $A^{t}(a_{1},\ldots,a_{n})=0$ and the vector $\delta
=\sum_{i=1}^{n}a_{i}\alpha_{i}$ then belongs to $R$. The sets of real roots,
of imaginary roots, of positive real roots and positive imaginary roots can be
completely described in terms of roots in $R^{\circ}$ and $\delta$. We refer
to \cite{KacB} p. 83 for a complete exposition and only recall the following
facts we need in the sequel.\ In particular, we do not need the complete
description of the sets $R_{+}^{re}$ which strongly depends on the affine root
system considered. We have
\[
R_{+}^{re}\subset\{\alpha+k\delta\mid\alpha\in R^{\circ},k\in{\mathbb{Z}}%
_{>0}{\mathbb{\}\cup}}R_{+}^{\circ}
\]
except for the affine root system $A_{2n}^{(2)}$ in which case
\[
R_{+}^{re}\subset\{\alpha+k\delta\mid\alpha\in R^{\circ},k\in{\mathbb{Z}}%
_{>0}{\mathbb{\}\cup\{}}\frac{1}{2}(\alpha+(2k-1)\delta\mid\alpha\in R^{\circ
},k\in Z_{>0}{\mathbb{{\mathbb{\}}}\cup}}R_{+}^{\circ}.
\]
We also have in all affine cases
\begin{equation}
R_{+}^{im}=\{k\delta\mid k\in{\mathbb{Z}}_{>0}{\mathbb{\}}}\quad{\text{and}%
}\quad R_{+}=R_{+}^{re}\cup R_{+}^{im}{\text{.}} \label{PosAffRoots}%
\end{equation}
The multiplicities of the positive roots verify (see \cite{KacB} Corollary
8.3).
\begin{equation}
m_{\alpha}=1{\text{ for }}\alpha\in R_{+}^{re}{\text{ and }}m_{\alpha}\leq
n{\text{ for }}\alpha\in R_{+}^{im}. \label{multiaffine}%
\end{equation}

\subsection{The category ${\mathcal{O}}_{int}$ of ${\mathfrak{g}}$-modules}

Let ${\mathfrak{g}}$ be a symmetrizable Kac-Moody algebra.\ We now introduce a
category of ${\mathfrak{g}}$-modules whose properties naturally extend those
of the finite-dimensional representations of simple Lie algebras.

\begin{definition}
The category ${\mathcal{O}}_{int}$ is the category of ${\mathfrak{g}}$-modules
$M$ satisfying the following properties:

\begin{enumerate}
\item The module $M$ decomposes in weight subspaces on the form
\[
M=\bigoplus_{\gamma\in P}M_{\gamma}{\text{ where }}M_{\gamma}:=\{v\in M\mid
h(v)=\gamma(h)v\ {\text{ for any }}h\in{\mathfrak{h}}\}.
\]

\item For any $i=1,\ldots,n$, the actions of $e_{i}$ and $f_{i}$ are locally
nilpotent i.e. for any $v\in M$, there exists integers $p$ and $q$ such that
$e_{i}^{p}\cdot v=f_{i}^{q}\cdot v=0$.
\end{enumerate}
\end{definition}

For any $\gamma\in P$, let $e^{\gamma}$ be the generator of the group algebra
${\mathbb{C}}[P]$ associated to $\gamma$. By definition, we have $e^{\gamma
}e^{\gamma^{\prime}}=e^{\gamma+\gamma^{\prime}}$ for any $\gamma
,\gamma^{\prime}\in P$ and the group ${\mathsf{W}}$ acts on ${\mathbb{C}}[P]$
as follows: $w(e^{\gamma})=e^{w(\gamma)}$ for any $w\in{\mathsf{W}}$ and any
$\gamma\in P$.

The irreducible modules in the category ${\mathcal{O}}_{int}$ are the
irreducible highest weight modules, they are parametrized by the
\emph{integral cone of dominant weights } $P_{+}$ of ${\mathfrak{g}}$ defined
by
\[
P_{+}:=\{\lambda\in P\mid\lambda(h_{i})\geq0\ {\text{ for any }}%
i=1,\ldots,n\}.
\]
The irreducible highest weight module $V(\lambda)$ of weight $\lambda\in
P_{+}$ decomposes as $V(\lambda)=\bigoplus_{\gamma\in P}V(\lambda)_{\gamma};$
observe that $\operatorname{dim}V(\lambda)$ is infinite when ${\mathfrak{g}}$
is not of finite type, nevertheless the weight space $V(\lambda)_{\gamma}$ is
always finite-dimensional and we set $K_{\lambda,\gamma}:= \operatorname{dim}%
(V(\lambda)_{\gamma})$. Furthermore, we have $\operatorname{dim}%
V(\lambda)_{\lambda}=1$ and $e_{i}(v)=0$ for any $i=1,\ldots,n$ and $v\in
V(\lambda)_{\lambda}$; the elements of $V(\lambda)_{\lambda}$ thus coincide up
to a multiplication by a scalar and are called the \textit{highest weight
vectors}.

The character $s_{\lambda}$ of $V(\lambda)$ is defined by $s_{\lambda}%
:=\sum_{\gamma\in P}K_{\lambda,\gamma}e^{\gamma};$ it is invariant under the
action of the Weyl group ${\mathsf{W}}$ since $K_{\lambda,\gamma}%
=K_{\lambda,w(\gamma)}$ for any $w\in{\mathsf{W}}$. Observe that the orbit
${\mathsf{W}}\cdot\gamma$ intersects $P_{+}$ exactly once when $K_{\lambda
,\gamma}>0$.

From now on, we fix a weight $\rho\in P$ such that $\rho(h_{i})=1$ for any
$i=1,\ldots,n$; we have the Kac-Weyl character formula :

\begin{theorem}
For any $\lambda\in P_{+}$, we have $\displaystyle s_{\lambda}=\frac
{\sum_{w\in{\mathsf{W}}}\varepsilon(w)e^{w(\lambda+\rho)-\rho}}{\prod
_{\alpha\in R_{+}}(1-e^{-\alpha})^{m_{\alpha}}}\label{WKCF}.$
\end{theorem}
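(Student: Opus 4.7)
The plan is to follow the classical algebraic strategy via Verma modules and the generalized Casimir operator, as developed in Kac's book for symmetrizable Kac--Moody algebras. First I would introduce, for each $\lambda\in P_{+}$, the Verma module $M(\lambda):=U({\mathfrak{g}})\otimes_{U({\mathfrak{h}}\oplus{\mathfrak{g}}_{+})}{\mathbb{C}}_{\lambda}$, where ${\mathbb{C}}_{\lambda}$ is the one-dimensional $({\mathfrak{h}}\oplus{\mathfrak{g}}_{+})$-module on which ${\mathfrak{h}}$ acts by $\lambda$ and ${\mathfrak{g}}_{+}$ acts by zero. By the PBW theorem applied to ${\mathfrak{g}}_{-}$, $M(\lambda)$ is free of rank one over $U({\mathfrak{g}}_{-})$, and its formal character reads
\[
\mathrm{ch}\,M(\lambda)\;=\;\frac{e^{\lambda}}{\prod_{\alpha\in R_{+}}(1-e^{-\alpha})^{m_{\alpha}}}.
\]
So the identity to prove is equivalent to
\[
\mathrm{ch}\,V(\lambda)\;=\;\sum_{w\in{\mathsf{W}}}\varepsilon(w)\,\mathrm{ch}\,M(w(\lambda+\rho)-\rho).
\]

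Second, I would exploit symmetrizability: the matrix $DA$ being symmetric, there is a non-degenerate invariant symmetric bilinear form $(\cdot\,|\,\cdot)$ on ${\mathfrak{g}}$ (Kac, \S2.2), inducing a form on ${\mathfrak{h}}^{\ast}$ under which the generalized Casimir operator $\Omega$ is well defined on every object of ${\mathcal{O}}_{int}$ (locally finite action of $e_{i},f_{i}$ makes the normally infinite sum finite on each vector). The central computation is that on any highest weight module of highest weight $\mu$, $\Omega$ acts by the scalar $(\mu+\rho|\mu+\rho)-(\rho|\rho)$, a direct bracket calculation on the highest weight vector combined with the invariance of the form.

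Third — and this is the technical heart of the proof — one decomposes $V(\lambda)$ in the Grothendieck group of a suitable completion of category ${\mathcal{O}}_{int}$ as
\[
\mathrm{ch}\,V(\lambda)\;=\;\sum_{\mu\leq\lambda}c_{\mu}\,\mathrm{ch}\,M(\mu),\qquad c_{\lambda}=1,
\]
where $\leq$ is the partial order defined by $Q_{+}$. The Casimir argument forces $c_{\mu}\neq 0$ only if $(\mu+\rho|\mu+\rho)=(\lambda+\rho|\lambda+\rho)$. Combining this quadratic constraint with $\mu\leq\lambda$ and integrality, one shows the set of admissible $\mu$ is exactly $\{w(\lambda+\rho)-\rho\mid w\in{\mathsf{W}}\}$. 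The delicate point in the Kac--Moody setting, and what I expect to be the main obstacle, is convergence and well-definedness of these formal sums: one must work in the appropriate completion of ${\mathbb{C}}[P]$ where infinite sums supported in $-Q_{+}+\lambda$ make sense, and check that the linear independence of the $\mathrm{ch}\,M(\mu)$ still permits one to read off the coefficients $c_{\mu}$.

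Finally, the coefficients are pinned down by ${\mathsf{W}}$-invariance of $\mathrm{ch}\,V(\lambda)$. Multiplying the displayed identity by the denominator $D:=\prod_{\alpha\in R_{+}}(1-e^{-\alpha})^{m_{\alpha}}$ and by $e^{\rho}$, one uses that $e^{\rho}D$ is ${\mathsf{W}}$-antiinvariant — a consequence of $s_{i}(\rho)=\rho-\alpha_{i}$ together with the fact that each simple reflection $s_{i}$ permutes $R_{+}\setminus\{\alpha_{i}\}$ and changes only the sign of the factor indexed by $\alpha_{i}$. Since $e^{\rho}\,\mathrm{ch}\,V(\lambda)\cdot D$ must therefore be ${\mathsf{W}}$-antiinvariant and the Verma contributions are $e^{w(\lambda+\rho)}$, one forces $c_{w(\lambda+\rho)-\rho}=\varepsilon(w)$. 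Substituting back and using the Verma character formula yields exactly
\[
s_{\lambda}\;=\;\frac{\sum_{w\in{\mathsf{W}}}\varepsilon(w)\,e^{w(\lambda+\rho)-\rho}}{\prod_{\alpha\in R_{+}}(1-e^{-\alpha})^{m_{\alpha}}},
\]
completing the proof.
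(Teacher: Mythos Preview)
The paper does not give its own proof of this statement: it is the classical Kac--Weyl character formula, quoted as background (with implicit reference to \cite{KacB}) and then used throughout the paper as a tool. So there is no ``paper's own proof'' to compare against.

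Your outline is the standard argument from Kac's book (Chapter~10), and the overall architecture is right: Verma characters via PBW, the generalized Casimir to constrain which $M(\mu)$ can contribute, and ${\mathsf{W}}$-anti-invariance of $e^{\rho}D$ to pin down the signs. One step deserves more care, however. In your third paragraph you assert that the Casimir constraint $(\mu+\rho\,|\,\mu+\rho)=(\lambda+\rho\,|\,\lambda+\rho)$ together with $\mu\leq\lambda$ and integrality forces $\mu\in\{w(\lambda+\rho)-\rho\mid w\in{\mathsf{W}}\}$. In the Kac--Moody setting the bilinear form on ${\mathfrak{h}}^{\ast}$ is not positive definite, so there is no finite ``sphere'' and this set-theoretic claim is not the way the argument actually runs. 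What Kac does instead is: after multiplying by $e^{\rho}D$, use ${\mathsf{W}}$-anti-invariance to reduce to showing that the \emph{only} $\mu$ with $c_{\mu}\neq 0$, $\mu+\rho$ dominant, and satisfying the Casimir constraint is $\mu=\lambda$ itself; this last point uses that for $\nu\in P_{+}$ and $\beta\in Q_{+}\setminus\{0\}$ one has $(\nu+\rho\,|\,\beta)>0$, which rules out any other dominant contributor. The remaining terms then appear purely as the ${\mathsf{W}}$-orbit forced by anti-invariance, with coefficients $\varepsilon(w)$. If you rephrase your third and fourth steps in this order the sketch becomes a genuine proof.
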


The category ${\mathcal{O}}_{int}$ is stable under the tensor product of
${\mathfrak{g}}$-modules.\ Moreover, every module $M\in{\mathcal{O}}_{int}$
decomposes has a direct sum of irreducible modules.\ Given $\lambda
^{(1)},\ldots,\lambda^{(k)}$ a sequence of dominant weights, consider the
module $M:=V(\lambda^{(1)})\otimes\cdots\otimes V(\lambda^{(r)})$.\ Then
$\operatorname{dim}M_{\gamma}$ is finite for any $\gamma\in P$, the character
of $M$ can be defined by ${\mathrm{char}}(M):=\sum_{\gamma\in P}%
\operatorname{dim}M_{\gamma}e^{\gamma}$ and we have
\[
{\mathrm{char}}(M)=s_{\lambda^{(1)}}\cdots s_{\lambda^{(r)}.}%
\]
Each irreducible component of $M$ appears finitely many times in this
decomposition, in other words there exist nonnegative integers $m_{M,\lambda}$
such that
\[
M\simeq\bigoplus_{\lambda\in P_{+}}V(\lambda)^{\oplus m_{M,\lambda}}{\text{ or
equivalently }}{\mathrm{char}}(M):=\sum_{\lambda\in P_{+}}m_{M,\lambda
}s_{\lambda}.
\]
Consider $\kappa,\mu\in P_{+}$ and $\ell\in{\mathbb{Z}}_{\geq0}$.\ We set
\begin{equation}
V(\mu)\otimes V(\kappa)^{\otimes\ell}=\sum_{\lambda\in P_{+}}V(\lambda
)^{\oplus f_{\lambda/\mu}^{\kappa,\ell}}{\text{ and }}m_{\mu,\kappa}^{\lambda
}=f_{\lambda/\mu}^{\kappa,1}. \label{def_multi}%
\end{equation}
In the sequel, we will fix $\kappa\in P_{+}$ and write $f_{\lambda/\mu
}^{\kappa,\ell}=f_{\lambda/\mu}^{\ell}$ for short.

\subsection{Littelmann path model}

The aim of this paragraph is to give a brief overview of the path model
developed by Littelmann and its connections with Kashiwara crystal basis
theory.\ We refer to \cite{Lit1}, \cite{Lit2}, \cite{Lit3} and \cite{Kashi}
for examples and a detailed exposition. Let ${\mathfrak{g}}$ be a
symmetrizable Kac-Moody algebra associated to the quintuple $(A,\Pi,\Pi^{\vee
},P,P^{\vee})$ where $A$ is a $n\times n$ symmetrizable generalized Cartan
matrix with rank $r$. In the following, it will be convenient to fix a
nondegenerate symmetric bilinear form $\langle\cdot,\cdot\rangle$ on
${\mathfrak{h}}_{{\mathbb{R}}}^{\ast}$ invariant under ${\mathsf{W}}$.\ For
any root $\alpha$, we set $\alpha^{\vee}=\frac{\alpha}{\langle\alpha
,\alpha\rangle}$. We have seen that $P$ is a ${\mathbb{Z}}$-lattice with rank
$d=2n-r$.\ We define the notion of elementary piecewise linear paths in
$P_{{\mathbb{R}}}:=P\otimes_{{\mathbb{Z}}}{\mathbb{R}}$ as we did in
\S \ \ref{subsec-ele_path}.\ Let ${\mathcal{P}}$ be the set of such elementary
paths having \emph{only rational turning points} (i.e. whose inflexion points
have rational coordinates) and ending in $P$ i.e. such that $\pi(1)\in P$. The
Weyl group ${\mathsf{W}}$ acts on ${\mathcal{P}}$ as follows: for any
$w\in{\mathsf{W}}$ and $\eta\in{\mathcal{P}}$, the path $w[\eta]$ is defined
by
\begin{equation}
\forall t\in\lbrack0,1]\qquad w[\eta](t)=w(\eta(t)) \label{actW}%
\end{equation}
and the weight ${\mathrm{wt}}(\eta)$ of $\eta$ is defined by ${\mathrm{wt}%
}(\eta)=\eta(1)$.\ 

We now define operators $\tilde{e}_{i}$ and $\tilde{f}_{i},$ $i=1,\ldots,n,$
acting on ${\mathcal{P}}\cup\{{\mathbf{0}}\}$.\ If $\eta={\mathbf{0}}$, we set
$\tilde{e}_{i}(\eta)=\tilde{f}_{i}(\eta)={\mathbf{0}}$; when $\eta
\in{\mathcal{P}}$, we need to decompose $\eta$ into a union of finitely many
subpaths and reflect some of these subpaths by $s_{\alpha_{i}}$ according to
the behavior of the map
\[
h_{\eta}:\left\{
\begin{array}
[c]{ccl}%
\lbrack0,1] & \rightarrow & {\mathbb{R}}\\
t & \mapsto & \langle\eta(t),\alpha_{i}^{\vee}\rangle.
\end{array}
\right.
\]
Let $m_{\eta}$ for the minimum of the function $h_{\eta}$.\ Since $h_{\eta
}(0)=0$, we have $m_{\eta}\leq0$. \bigskip

If $m_{\eta}>-1$, then $\tilde{e}_{i}(\eta)={\mathbf{0}}$. If $m_{\eta}\leq
-1$, set $t_{1}:= \operatorname{inf}\{t\in\lbrack0,1]\mid h_{\eta}(t)=m_{\eta
}\}$ and let $t_{0}\in\lbrack0,t_{1}]$ be maximal such that $m_{\eta}\leq
h_{\eta}(t)\leq m_{\eta}+1$ for any $t\in\lbrack t_{0},t_{1}]$ (see figure
\ref{Figure1}). Choose $r\geq1$ and $t_{0}=t^{(0)}<t^{(1)}<\cdots
<t^{(r)}=t_{1}$ satisfying the following conditions: for $1\leq a\leq r$

(1) either $h_{\eta}(t^{(a-1})=h_{\eta}(t^{(a)})$ and $h_{\eta}(t)\geq
h_{\eta}(t^{(a)})$ on $[t^{(a-1)},t^{(a)}]$,

(2) or $h_{\eta}$ is strictly decreasing on $[t^{(a-1)},t^{(a)}]$ and
$h_{\eta}(t)\geq h_{\eta}(t^{(a-1)})$ on $[0,t^{(a-1)}]$.

\noindent We set $t^{(-1)}=0$ and $t^{(r+1)}=1$ and, for $0\leq a\leq r+1$, we
denote by $\eta_{a}$ the elementary path defined by
\[
\forall u\in\lbrack0,1]\quad\eta_{a}(u)=\eta(t^{(a-1)}+u(t^{(a)}%
-t^{(a-1)}))-\eta(t^{(a-1)}).
\]
Observe that $\eta_{a}$ is the elementary path whose image translated by
$\eta(t^{(a-1)})$ coincides with the restriction of $\eta$ on $[t^{(a-1)}%
,t^{(a)}]$; the path $\eta$ decomposes as follows
\[
\eta=\eta_{0}\ast\eta_{1}\ast\cdots\ast\eta_{r}\ast\eta_{r+1}.
\]
For $1\leq a\leq r+1$, we also set $\eta_{a}^{\prime}=\eta_{a}$ in case (1)
and $\eta_{a}^{\prime}=s_{\alpha_{i}}(\eta_{a})$ in case (2). For
$i\in\{1,\cdots,n\}$, we set
\[
\tilde{e}_{i}(\eta)=\left\{
\begin{array}
[c]{ll}%
{\mathbf{0}} & {\text{if }}\ h_{\eta}(1)<m_{\eta}+1,\\
\eta_{0}\ast\eta_{1}^{\prime}\ast\cdots\ast\eta_{r}^{\prime}\ast\eta_{r+1} &
{\text{otherwise. }}%
\end{array}
\right.
\]
To define the $\tilde{f}_{i}$, we first propose another decomposition of the
path $\eta$. If $h_{\eta}(1)<m_{\eta}+1$, then $\tilde{f}_{i}(\eta
)={\mathbf{0}}$. Otherwise ($h_{\eta}(1)\geq m_{\eta}+1$), set $t_{0}^{\prime
}:=\sup\{t\in\lbrack0,1]\mid h_{\eta}(t_{0}^{\prime})=m_{\eta}\}$ and let
$t_{1}^{\prime}\in\lbrack t_{0}^{\prime},1]$ be minimal such that $h_{\eta
}(t)\geq m_{\eta}+1$ for $t\in\lbrack t_{1}^{\prime},1]$ (see figure
\ref{Figure1}). Choose $r\geq1$ and $t_{0}^{\prime}=t^{(0)}<t^{(1)}%
<\cdots<t^{(r)}=t_{1}^{\prime}$ satisfying the following conditions: for
$1\leq a\leq r$

(3) either $h_{\eta}(t^{(a-1})=h_{\eta}(t^{(a)})$ and $h_{\eta}(t)\geq
h_{\eta}(t^{(a-1)})$ on $[t^{(a-1)},t^{(a)}],$

(4) or $h_{\eta}$ is strictly increasing on $[t^{(a-1)},t^{(a)}]$ and
$h_{\eta}(t)\geq h_{\eta}(t^{(a)})$ on $[t^{(a)},1].$

\noindent We set $t^{(-1)}=0$ and $t^{(r+1)}=1$ and, for $0\leq a\leq r+1$, we
denote by $\eta_{a}$ the elementary path defined by
\[
\forall u\in\lbrack0,1]\quad\eta_{a}(u)=\eta(t^{(a-1)}+u(t^{(a)}%
-t^{(a-1)}))-\eta(t^{(a-1)}).
\]
As above, the path $\eta$ decomposes as $\eta=\eta_{0}\ast\eta_{1}\ast
\cdots\ast\eta_{r}\ast\eta_{r+1}$; for $1\leq a\leq r+1$, we thus set
$\eta_{a}^{\prime}=\eta_{a}$ in case (3) and $\eta_{a}^{\prime}=s_{\alpha_{i}%
}(\eta_{a})$ in case (4) and the operator $\tilde{f}_{i},1\leq i\leq n,$ is
defined by
\[
\tilde{f}_{i}(\eta)=\left\{
\begin{array}
[c]{ll}%
{\mathbf{0}} & {\text{if }}\ h_{\eta}(1)<m_{\eta}+1,\\
\eta_{0}\ast\eta_{1}^{\prime}\ast\cdots\ast\eta_{r}^{\prime}\ast\eta_{r+1} &
{\mathrm{otherwise.}}%
\end{array}
\right.
\]
%

\begin{figure}
[ptb]
\begin{center}
\fbox{\includegraphics[
trim=0.000000in 0.000000in -0.029974in -0.002869in,
height=10.0342cm,
width=12.1166cm
]%
{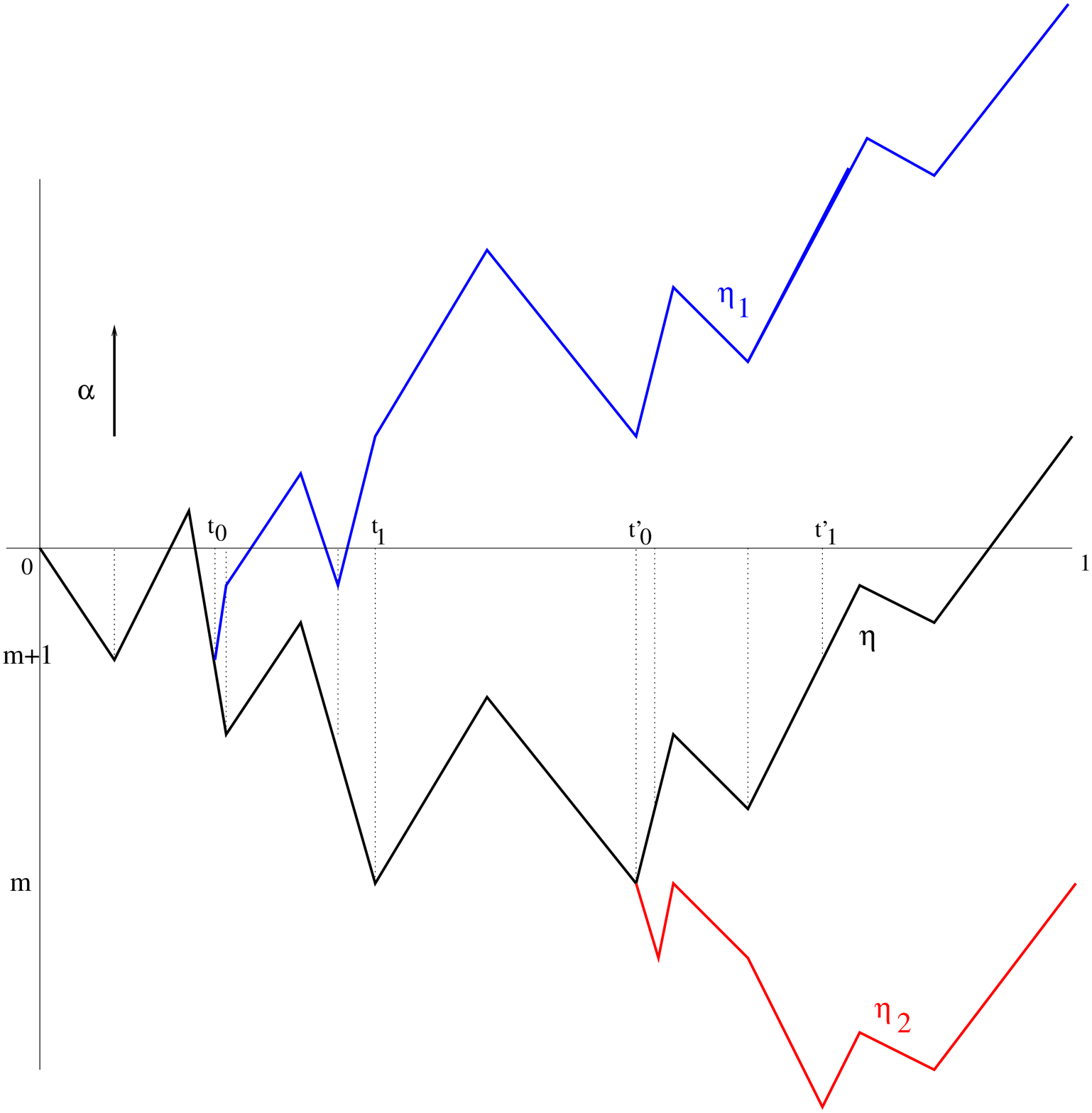}%
}\caption{Paths $\eta$, $\eta_{1}=\tilde{e}_{i}(\eta)$ and $\eta_{2}=\tilde
{f}_{i}(\eta)$}%
\label{Figure1}%
\end{center}
\end{figure}

\noindent\textbf{Remarks:} {1. When ${\mathfrak{g}}$ is finite-dimensional,
the symmetric bilinear form $\langle\cdot,\cdot\rangle$ can be assumed
positive so that elements of $W$ are isometries. The paths $\eta,\tilde{e}%
_{i}(\eta)$ and $\tilde{f}_{i}(\eta)$ have the same length. This is no longer
true when ${\mathfrak{g}}$ is of affine or indefinite type.}

2. When $\tilde{e}_{i}(\eta)$ is computed, the segments of $\eta$ which are
replaced by their symmetric under $s_{\alpha_{i}}$ correspond to intervals
where $h_{\eta}$ is strictly decreasing. This implies that $h_{\eta}(t)\leq
h_{\tilde{e}_{i}(\eta)}(t)$ for any $t\in\lbrack0,1]$. Similarly, we have
$h_{\eta}(t)\geq h_{\tilde{f}_{i}(\eta)}(t)$ for any $t\in\lbrack0,1]$.
\bigskip

\noindent The operators $\tilde{e}_{i}$ and $\tilde{f}_{i}$ satisfy the
following properties :

\begin{proposition}
\ \label{prop-ac_etil}

\begin{enumerate}
\item Assume $\tilde{e}_{i}(\eta)\neq{\mathbf{0}};\;$then $\tilde{e}_{i}%
(\eta)(1)=\eta(1)+\alpha_{i}$ and $\tilde{f}_{i}(\tilde{e}_{i}(\eta))=\eta$.

\item Assume $\tilde{f}_{i}(\eta)\neq{\mathbf{0}};\;$then $\tilde{f}_{i}%
(\eta)(1)=\eta(1)-\alpha_{i}$ and $\tilde{e}_{i}(\tilde{f}_{i}(\eta))=\eta$.

\item A path $\eta\in{\mathcal{P}}$ satisfies $\tilde{e}_{i}(\eta
)={\mathbf{0}}$ for any $i=1,\ldots,n$ if and only if $\operatorname{Im}%
\eta+\rho$ is contained in $\mathring{{\mathcal{C}}}$.
\end{enumerate}
\end{proposition}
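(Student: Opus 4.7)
The plan is to treat the three assertions separately: parts (1) and (2) are symmetric direct computations on the piecewise decomposition defining the root operators, while (3) is a clean translation of the non‑vanishing condition $m_\eta\le-1$ through the identification $\langle\cdot,\alpha_i^\vee\rangle=\,\cdot\,(h_i)$ and the property $\rho(h_i)=1$.

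For part (1), fix $\eta$ with $\tilde e_i(\eta)\neq\mathbf 0$ and keep the notation of the definition: the intervals $[t^{(a-1)},t^{(a)}]$ for $1\le a\le r$ partition $[t_0,t_1]$ into pieces of type (1) (where $h_\eta$ returns to its initial height) and type (2) (where $h_\eta$ is strictly decreasing), and the outer pieces $\eta_0,\eta_{r+1}$ are left unchanged. Since $s_{\alpha_i}$ is linear and fixes the hyperplane $\langle\cdot,\alpha_i^\vee\rangle=0$, reflecting a piece of type (2) changes its endpoint vector $\eta_a(1)=\eta(t^{(a)})-\eta(t^{(a-1)})$ by
\[
s_{\alpha_i}(\eta_a(1))-\eta_a(1)=-\langle\eta_a(1),\alpha_i^\vee\rangle\,\alpha_i=\bigl(h_\eta(t^{(a-1)})-h_\eta(t^{(a)})\bigr)\alpha_i,
\]
while pieces of type (1) contribute $0$ because $h_\eta(t^{(a-1)})=h_\eta(t^{(a)})$. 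Summing telescopically over $a=1,\dots,r$ yields a total endpoint change of $\bigl(h_\eta(t_0)-h_\eta(t_1)\bigr)\alpha_i$. By construction $h_\eta(t_1)=m_\eta$ and the maximality of $t_0$ combined with $h_\eta(1)\ge m_\eta+1$ give $h_\eta(t_0)=m_\eta+1$, so $\tilde e_i(\eta)(1)=\eta(1)+\alpha_i$.

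For the involutivity $\tilde f_i\tilde e_i(\eta)=\eta$, I would show that the decomposition used to compute $\tilde f_i$ on $\tilde e_i(\eta)$ is exactly the one used for $\tilde e_i$, with the roles of case (1)/(2) and case (3)/(4) interchanged. The key observations are: (a) after applying $\tilde e_i$, every type (2) segment becomes a strictly increasing segment and vice versa, so the altitude function of $\tilde e_i(\eta)$ on $[t_0,t_1]$ is the reflection of $h_\eta$ about the line $h=m_\eta+1$; (b) consequently the minimum of $h_{\tilde e_i(\eta)}$ on $[0,1]$ equals $m_\eta+1$, and the corresponding times $t_0',t_1'$ for the $\tilde f_i$ construction coincide with $t_1,t_0$ of the $\tilde e_i$ construction; (c) applying $s_{\alpha_i}$ to the already reflected pieces restores them. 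Part (2) is handled in the exact mirror fashion. The main bookkeeping obstacle is verifying (a)--(c) rigorously when several ``case (1)'' plateaus are interleaved with ``case (2)'' decreasing pieces, but this reduces to the observation that on each plateau neither operator does anything and on each strictly monotonic piece reflection is self‑inverse.

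For part (3), the operators $\tilde e_i$ annihilate $\eta$ simultaneously for all $i$ iff $m_\eta^{(i)}:=\min_{t\in[0,1]}\langle\eta(t),\alpha_i^\vee\rangle>-1$ for every $i\in\{1,\dots,n\}$. Under the identification $\langle\gamma,\alpha_i^\vee\rangle=\gamma(h_i)$ this is $\eta(t)(h_i)>-1$ for all $t$ and $i$, equivalently $(\eta(t)+\rho)(h_i)>0$ for all $t$ and $i$ because $\rho(h_i)=1$. Since $\mathring{\mathcal C}=\{\gamma\in P_{\mathbb R}\mid\gamma(h_i)>0\text{ for all }i\}$, this is precisely the condition $\operatorname{Im}\eta+\rho\subset\mathring{\mathcal C}$.
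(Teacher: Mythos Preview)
The paper does not give its own proof of this proposition: it is stated as background from Littelmann's work, with \cite{Lit1}, \cite{Lit2}, \cite{Lit3} as the references, and no proof environment follows. Your direct verification from the definitions therefore goes beyond what the paper does here.

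Your computation of the weight shift in (1) is correct, with one small slip: the boundary fact you want is $h_\eta(0)=0\ge m_\eta+1$ (which follows from $m_\eta\le-1$, the non-vanishing condition for $\tilde e_i$), not ``$h_\eta(1)\ge m_\eta+1$'', which is the non-vanishing condition for $\tilde f_i$. With $h_\eta(0)\ge m_\eta+1$ and continuity, the maximality of $t_0$ forces $h_\eta(t_0)=m_\eta+1$, and your telescoping argument then gives the $+\alpha_i$ shift as written. The involutivity sketch (a)--(c) is along the right lines and is essentially how Littelmann argues; you correctly flag that the interleaving of plateau and monotone pieces is where the bookkeeping lives. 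Part (2) is symmetric as you say, and your argument for (3) is clean and complete.
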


{\noindent\textbf{Remark:} It also directly follows from the definition of
$\tilde{f}_{i}(\eta)$ that there exists a piecewise linear increasing map $g$
defined on $[0,1]$ satisfying}
\begin{equation}
\eta(t)-\tilde{f}_{i}(\eta)(t)=g(t)\alpha_{i}\text{ for any }t\in
\lbrack0,1]\label{func_g}%
\end{equation}
and $g(0)=0,$ $g(1)=1$.

\bigskip

We may endow ${\mathcal{P}}$ with the structure of a Kashiwara crystal: this
means that ${\mathcal{P}}$ has the structure of a colored oriented graph by
drawing an arrow $\eta\overset{i}{\rightarrow}\eta^{\prime}$ between the two
paths $\eta,\eta^{\prime}$ of ${\mathcal{P}}$ as soon as $\tilde{f}_{i}%
(\eta)=\eta^{\prime}$ (or equivalently $\eta=\tilde{e}_{i}(\eta^{\prime}%
)$).\ For any $\eta\in{\mathcal{P}}$, we denote by $B(\eta)$ the connected
component of $\eta$ i.e. the subgraph of ${\mathcal{P}}$ obtained by applying
operators $\tilde{e}_{i}$ and $\tilde{f}_{i}$, $i=1,\ldots,n$ to $\eta
$.\ \bigskip

\noindent For any path $\eta\in{\mathcal{P}}$ and $i=1,\ldots,n$, set
$\varepsilon_{i}(\eta)=\max\{k\in{\mathbb{Z}}_{\geq0}\mid\tilde{e}_{i}%
^{k}(\eta)={\mathbf{0}}\}$ and $\varphi_{i}(\eta)=\max\{k\in{\mathbb{Z}}%
_{\geq0}\mid\tilde{f}_{i}^{k}(\eta)={\mathbf{0}}\}$; one easily checks that
$\varepsilon_{i}(\eta)$ and $\varphi_{i}(\eta)$ are finite. \bigskip

\noindent We now introduce the following notations

$\bullet\quad{\mathcal{P}}_{\min{\mathbb{Z}}}$ is the set of \textit{integral
paths}, that is paths $\eta$ such that $m_{\eta}=\min_{t\in\lbrack
0,1]}\{\langle\eta(t),\alpha_{i}^{\vee}\rangle\}$ belongs to ${\mathbb{Z}}$
for any $i=1,\ldots,n$.

$\bullet\quad{\mathcal{C}}$ is the cone in ${\mathfrak{h}}_{{\mathbb{R}}}^{*}$
defined by $\displaystyle {\mathcal{C}}=\{x\in{\mathfrak{h}}_{{\mathbb{R}}%
}^{\ast}\mid x(h_{i})\geq0\}.$

$\bullet\quad\mathring{{\mathcal{C}}}$ is the interior of ${{\mathcal{C}}}$;
it is defined by $\displaystyle \mathring{{\mathcal{C}}}=\{x\in{\mathfrak{h}%
}_{{\mathbb{R}}}^{\ast}\mid x(h_{i})>0\}{\text{.}}$

\noindent One gets the

\begin{proposition}
\ \label{Prop_HP} Let $\eta$ and $\pi$ two paths in ${\mathcal{P}}%
_{\min{\mathbb{Z}}}$. Then

\begin{enumerate}
\item the concatenation $\pi\ast\eta$ belongs to ${\mathcal{P}}_{\min
{\mathbb{Z}}}$,

\item for any $i=1,\ldots,n$ we have
\begin{equation}
\tilde{e}_{i}(\eta\ast\pi)=\left\{
\begin{array}
[c]{ll}%
\eta\ast\tilde{e}_{i}(\pi) & {\text{if }}\varepsilon_{i}(\pi)>\varphi_{i}%
(\eta)\\
\tilde{e}_{i}(\eta)\ast\pi & {\text{otherwise,}}%
\end{array}
\right.  {\text{and }}\tilde{f}_{i}(\eta\ast\pi)=\left\{
\begin{array}
[c]{ll}%
\tilde{f}_{i}(\eta)\ast\pi & {\text{if }}\varphi_{i}(\eta)>\varepsilon_{i}%
(\pi)\\
\eta\ast\tilde{f}_{i}(\pi) & {\text{otherwise.}}%
\end{array}
\right.  \label{Ten_Prod}%
\end{equation}
In particular, $\tilde{e}_{i}(\eta\ast\pi)={\mathbf{0}}$ if and only if
$\tilde{e}_{i}(\eta)={\mathbf{0}}$ and $\varepsilon_{i}(\pi)\leq\varphi
_{i}(\eta)$ for any $i=1,\ldots,n$.

\item $\tilde{e}_{i}(\eta)={\mathbf{0}}$ for any $i=1,\ldots,n$ if and only if
$\operatorname{Im}\eta$ is contained in ${\mathcal{C}}$.
\end{enumerate}
\end{proposition}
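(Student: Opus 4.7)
I would prove the three assertions in sequence.

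For part (1), the key observation is that for each index $i$, the height function $h_{\pi\ast\eta}(t) := \langle (\pi\ast\eta)(t), \alpha_i^\vee\rangle$ is made up of $h_\pi$ on $[0,1/2]$ and $h_\pi(1) + h_\eta$ on $[1/2,1]$ (after the obvious reparametrization). Its minimum over $[0,1]$ is therefore $\min(m_\pi,\, h_\pi(1) + m_\eta)$. Since $\pi(1) \in P$ forces $h_\pi(1) = \langle \pi(1),\alpha_i^\vee\rangle \in \mathbb{Z}$, and $m_\pi, m_\eta \in \mathbb{Z}$ by hypothesis, this minimum lies in $\mathbb{Z}$ for every $i$, which is exactly the condition $\pi\ast\eta \in \mathcal{P}_{\min\mathbb{Z}}$.

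The core of the proposition is part (2). A preliminary step is to verify, by induction on $k$ using Proposition \ref{prop-ac_etil}(1)--(2), that for integral paths $\varepsilon_i(\pi) = -m_\pi$ and $\varphi_i(\eta) = h_\eta(1) - m_\eta$. Writing $\mu = \eta \ast \pi$, one has $m_\mu = \min(m_\eta,\, h_\eta(1) + m_\pi)$, and the inequality $\varepsilon_i(\pi) > \varphi_i(\eta)$ rewrites as $m_\eta > h_\eta(1) + m_\pi$, i.e.\ the minimum of $h_\mu$ is achieved strictly within the $\pi$-half of $\mu$. In that situation I would show that the first hitting time $t_1 = \inf\{t : h_\mu(t) = m_\mu\}$ and its partner $t_0$ both lie in $[1/2,1]$; consequently the decomposition $\mu = \mu_0 \ast \mu_1 \ast \cdots \ast \mu_{r+1}$ used in the algorithmic definition of $\tilde{e}_i$ affects only $\pi$, giving $\tilde{e}_i(\mu) = \eta \ast \tilde{e}_i(\pi)$. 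The reverse inequality forces the action into the $\eta$-half and yields $\tilde{e}_i(\mu) = \tilde{e}_i(\eta) \ast \pi$, with the convention that the right-hand side is $\mathbf{0}$ if $\tilde{e}_i(\eta) = \mathbf{0}$. The last clause of (2) is then an immediate consequence, and the formula for $\tilde{f}_i$ follows from an entirely symmetric analysis of $h_\mu$, exchanging the roles of the two factors.

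For part (3), I would observe that by definition $\tilde{e}_i(\eta) = \mathbf{0}$ is equivalent to $m_\eta > -1$. Because $\eta$ is integral, $m_\eta \in \mathbb{Z}$, so this reduces to $m_\eta \geq 0$, which says precisely that $\langle \eta(t), \alpha_i^\vee\rangle \geq 0$ on $[0,1]$. Imposing this for every $i = 1, \ldots, n$ is exactly the definition of $\operatorname{Im}\eta \subset \mathcal{C}$.

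The main obstacle is part (2), and specifically the borderline configurations where $\varepsilon_i(\pi) = \varphi_i(\eta)$ or where $m_\mu$ is realised simultaneously at several times spread across both halves of $[0,1]$. In such cases one must argue carefully that an admissible decomposition $(t^{(a)})_a$ may be chosen whose cuts include $t = 1/2$, after which the subpaths on each side of the junction can be grouped so that the reflecting procedure indeed localises on a single factor, recovering the claimed formula without ambiguity.
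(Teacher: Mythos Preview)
The paper does not actually supply a proof of this proposition: it is stated as background material from Littelmann's work (the surrounding text refers the reader to \cite{Lit1}, \cite{Lit2}, \cite{Lit3} and \cite{Kashi} for the detailed exposition), so there is no in-paper argument to compare against. Your sketch is correct and is essentially the standard argument one finds in Littelmann's original papers: reducing everything to the piecewise-linear height function $h_\mu$, identifying $\varepsilon_i = -m$ and $\varphi_i = h(1)-m$ for integral paths, and then locating on which half of the concatenation the minimum of $h_{\eta\ast\pi}$ is attained. Your flagging of the borderline case $\varepsilon_i(\pi)=\varphi_i(\eta)$ is apt; the usual resolution is exactly what you indicate, namely choosing the admissible subdivision $(t^{(a)})$ so that the junction point is one of the cuts, after which integrality forces the reflected segments to sit entirely in one factor.
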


The following theorem summarizes crucial results of Littelmann (see
\cite{Lit1}, \cite{Lit2} and \cite{Lit3}).\ 

\begin{theorem}
\label{Th_Littel}Consider $\lambda,\mu$ and $\kappa$ dominant weights and
choose arbitrarily elementary paths $\eta_{\lambda},\eta_{\mu}$ and
$\eta_{\kappa}$ in ${\mathcal{P}}$ such that $\operatorname{Im}\eta_{\lambda
}\subset{\mathcal{C}}$, $\operatorname{Im}\eta_{\mu}\subset{\mathcal{C}}$ and
$\operatorname{Im}\eta_{\kappa}\subset{\mathcal{C}}$ and joining respectively
$0$ to $\lambda$, $0$ to $\mu$ and $0$ to $\kappa$.

\begin{enumerate}
\item We have $B(\eta_{\lambda}):=\{\tilde{f}_{i_{1}}\cdots\tilde{f}_{i_{k}%
}\eta_{\lambda}\mid k\in{\mathbb{N}}{\text{ and }}1\leq i_{1},\cdots,i_{k}\leq
n\}\setminus\{{\mathbf{0}}\}.$

In particular ${\mathrm{wt}}(\eta)-{\mathrm{wt}}(\eta_{\lambda})\in Q_{+}$ for
any $\eta\in B(\eta_{\lambda})$.

\item The graph $B(\eta_{\lambda})$ is contained in ${\mathcal{P}}%
_{\min{\mathbb{Z}}}.$

\item If $\eta_{\lambda}^{\prime}$ is another elementary path from $0$ to
$\lambda$ such that $\operatorname{Im}\eta_{\lambda}^{\prime}$ is contained in
${\mathcal{C}}$, then $B(\eta_{\lambda})$ and $B(\eta_{\lambda}^{\prime})$ are
isomorphic as oriented graphs i.e. there exists a bijection $\theta
:B(\eta_{\lambda})\rightarrow B(\eta_{\lambda}^{\prime})$ which commutes with
the action of the operators $\tilde{e}_{i}$ and $\tilde{f}_{i}$,
$i=1,\ldots,n$.

\item The crystal $B(\eta_{\lambda})$ is isomorphic to the Kashiwara crystal
graph $B(\lambda)$ associated to the $U_{q}({\mathfrak{g}})$-module of highest
weight $\lambda$.

\item We have
\begin{equation}
\label{slambda}s_{\lambda}=\sum_{\eta\in B(\eta_{\lambda})}e^{\eta(1)}.
\end{equation}

\item For any $i=1,\ldots,n$ and any $b\in B(\eta_{\lambda})$, let $s_{i}(b)$
be the unique path in $B(\eta_{\lambda})$ such that
\[
\varphi_{i}(s_{i}(b))=\varepsilon_{i}(b){\text{ and }}\varepsilon_{i}%
(s_{i}(b))=\varphi_{i}(b)
\]
(in other words, $s_{i}$ acts on each $i$-chain ${\mathcal{C}}_{i}$ as the
symmetry with respect to the center of ${\mathcal{C}}_{i}$). The actions of
the $s_{i}$'s extend to an action of ${\mathsf{W}}$ on ${\mathcal{P}}$ which
stabilizes $B(\eta_{\lambda})$. In particular, for any $w\in{\mathsf{W}}$ and
any $b\in B(\eta_{\lambda})$, we have $w(b)\in B(\eta_{\lambda})$ and
${\mathrm{wt}}(w(b))=w({\mathrm{wt}}(b))$.\footnote{This action should not be
confused with that defined in (\ref{actW}) which does not stabilize
$B(\eta_{\lambda})$ in general.}

\item For any $b\in B(\eta_{\lambda})$ we have ${\mathrm{wt}}(b)=\sum
_{i=1}^{n}(\varphi_{i}(b)-\varepsilon_{i}(b))\omega_{i}.$

\item Given any integer $\ell\geq0$, set
\begin{equation}
B(\eta_{\mu})\ast B(\eta_{\kappa})^{\ast\ell}=\{\pi=\eta\ast\eta_{1}\ast
\cdots\ast\eta_{\ell}\in{\mathcal{P}}\mid\eta\in B(\eta_{\mu}){\text{ and }%
}\eta_{k}\in B(\eta_{\kappa})\ {\text{ for any }}k=1,\ldots,\ell\}.
\label{def_crysta_*}%
\end{equation}
The graph $B(\eta_{\mu})\ast B(\eta_{\kappa})^{\ast\ell}$ is contained in
${\mathcal{P}}_{\min{\mathbb{Z}}}.$

\item The multiplicity $m_{\mu,\kappa}^{\lambda}$ defined in (\ref{def_multi})
is equal to the number of paths of the form $\mu\ast\eta$ with $\eta\in
B(\eta_{\kappa})$ contained in ${\mathcal{C}}$.

\item The multiplicity $f_{\lambda/\mu}^{\ell}$ defined in (\ref{def_multi})
is equal to cardinality of the set
\[
H_{\lambda/\mu}^{\ell}:=\{\pi\in B(\eta_{\mu})\ast B(\eta_{\kappa})^{\ast\ell
}\mid\tilde{e}_{i}(\pi)=0\ {\text{ for any }}i=1,\ldots,n{\text{ and }}%
\pi(1)=\lambda\}.
\]
Each path $\pi=\eta\ast\eta_{1}\ast\cdots\ast\eta_{\ell}\in H_{\lambda/\mu
}^{\ell}$ verifies $\operatorname{Im}\pi\subset{\mathcal{C}}$ and $\eta
=\eta_{\mu}$.
\end{enumerate}
\end{theorem}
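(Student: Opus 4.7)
The plan is to recognize that this theorem is a compilation of Littelmann's foundational results on the path model (\cite{Lit1}, \cite{Lit2}, \cite{Lit3}) together with Kashiwara's crystal theory (\cite{Kashi}); accordingly, my proof would proceed by verifying each item, either by citation or by a short direct argument from the definitions and from Propositions~\ref{prop-ac_etil} and \ref{Prop_HP} already at our disposal.

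For item (1), I would first note that since $\operatorname{Im}\eta_{\lambda}\subset\mathcal{C}$, Proposition~\ref{prop-ac_etil}(3) gives $\tilde e_i(\eta_\lambda)=\mathbf{0}$ for every $i$, so the connected component of $\eta_\lambda$ in $\mathcal{P}$ is obtained by applying only $\tilde f_i$'s; the weight statement follows from Proposition~\ref{prop-ac_etil}(2), which shows each $\tilde f_i$ subtracts a simple root from the endpoint. For item (2), integrality is preserved by the root operators by direct inspection of their definitions (the reflections at turning points $t^{(a)}$ produce values that differ from the original minima by integers), so induction starting from $\eta_\lambda\in\mathcal{P}_{\min\mathbb{Z}}$ suffices. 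Items (3), (4) and (6) are the most substantial algebraic results: (3) is Littelmann's isomorphism theorem (\cite{Lit2}, Theorem~7.1), (4) is the identification of his path crystal with Kashiwara's $B(\lambda)$ proved in \cite{Lit3}, and (6) is the action of the Weyl group by his reflection construction on $i$-chains (\cite{Lit1}); for each, I would reference the original statement rather than reprove it.

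Items (5) and (7) are immediate consequences of (4): (5) follows because $B(\eta_\lambda)$ has the same weight multiplicities as $V(\lambda)$, and (7) follows from the crystal-theoretic identity $\langle\mathrm{wt}(b),h_i\rangle=\varphi_i(b)-\varepsilon_i(b)$ valid in any normal crystal, combined with $\mathrm{wt}(b)=\sum_i\langle\mathrm{wt}(b),h_i\rangle\omega_i\pmod{\text{null directions}}$, which expresses the weight in terms of fundamental weights. Item (8) is proven by iterating Proposition~\ref{Prop_HP}(1) on $B(\eta_\mu)\ast B(\eta_\kappa)^{\ast\ell}$.

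The deeper combinatorial statements are (9) and (10), which connect the crystal to tensor-product multiplicities. For (9), I would observe that by (4) the crystal $B(\eta_\mu)\ast B(\eta_\kappa)$ realizes the tensor product $B(\mu)\otimes B(\kappa)$, so the multiplicity of $V(\lambda)$ in $V(\mu)\otimes V(\kappa)$ equals the number of highest weight vertices of weight $\lambda$; by Proposition~\ref{Prop_HP}(2)-(3), a concatenation $\eta_\mu\ast\eta$ is highest weight (annihilated by all $\tilde e_i$) precisely when $\eta_\mu$ is highest weight (automatic) and $\varepsilon_i(\eta)\leq\varphi_i(\eta_\mu)=\langle\mu,h_i\rangle$ for every $i$, which using item (7) rewrites as the condition that the path $\mu\ast\eta$ stays in $\mathcal{C}$. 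Item (10) is then obtained by iterating this argument over the $\ell$ copies of $B(\eta_\kappa)$, and the remark that each such $\pi$ satisfies $\operatorname{Im}\pi\subset\mathcal{C}$ follows once more from Proposition~\ref{Prop_HP}(3). The main obstacle, if one were to give fully self-contained proofs, would be (3) and (4) — the independence of the model from the choice of generating path and its identification with Kashiwara's crystal — both of which rest on nontrivial inductive arguments in Littelmann's papers and which I would cite rather than reproduce.
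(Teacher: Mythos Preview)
Your proposal is correct and aligns with the paper's treatment: the paper does not prove this theorem at all but simply presents it as a summary of Littelmann's results, citing \cite{Lit1}, \cite{Lit2}, \cite{Lit3} (and \cite{Kashi} for the crystal side). You actually go further than the paper by sketching how the easier items (1), (2), (5), (7), (8), (9), (10) follow from the definitions and from Propositions~\ref{prop-ac_etil} and~\ref{Prop_HP}, while correctly identifying (3), (4), (6) as the deep results requiring citation.
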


\noindent\textbf{Remarks:}\ 1. Combining assertion (2) of Proposition
\ref{prop-ac_etil} together with assertions (1) and (5) of the Theorem
\ref{Th_Littel}, one may check that the function $e^{-\lambda}s_{\lambda}$ is
in fact a polynomial in the variables $T_{i}=e^{-\alpha_{i}}$, namely
\begin{equation}
s_{\lambda}=e^{\lambda}S_{\lambda}(T_{1},\ldots,T_{n}) \label{defS}%
\end{equation}
where $S_{\lambda}\in{\mathbb{C}}[X_{1},\ldots,X_{n}]$. Observe also that the
quantity $S_{\infty}:=\prod_{\alpha\in R_{+}}\frac{1}{(1-e^{-\alpha
})^{m_{\alpha}}}$ is a formal power series in the variables $T_{1}%
,\ldots,T_{n}$.\ M. Kashiwara proved (see for instance \cite{Joseph}
\S \ 20.7) that the crystal $B(\lambda)$ admits a projective limit $B(\infty)$
when $\lambda$ tends to infinity and that
\[
{\mathrm{char}}(B(\infty))=\sum_{b\in B(\infty)}e^{{\mathrm{wt}}(b)}%
=S_{\infty}.
\]
Now, since $B(\lambda)$ can be embedded in $B(\infty)$ up to a translation by
the weights by $\lambda$, we have
\begin{equation}
S_{\lambda}(T_{1},\ldots,T_{n})\leq S_{\infty}(T_{1},\ldots,T_{n});
\label{inequality}%
\end{equation}
in other words $S_{\infty}(T_{1},\ldots,T_{n})=S_{\lambda}(T_{1},\ldots
,T_{n})+\sum_{\mu\in Q_{+}}a_{\mu}T^{\mu}$ where the coefficients $a_{\mu}$
are nonnegative integers.

2. Using assertion (1) of Theorem \ref{Th_Littel}, we obtain $m_{\mu,\delta
}^{\lambda}\neq0$ only if $\mu+\delta-\lambda\in Q_{+}$. Similarly, when
$f_{\lambda/\mu}^{\delta,\ell}\neq0$ one necessarily has $\mu+\ell
\delta-\lambda\in Q_{+}$.

3. A minuscule weight is a dominant weight $\kappa\in P_{+}$ such that the
weights of $V(\kappa)$ are exactly those of the orbit ${\mathsf{W}}\cdot
\kappa$. In this case, if we take $\eta_{\kappa}:t\mapsto t\kappa$, the
crystal $B(\eta_{\kappa})$ contains only the paths $\eta:t\mapsto tw(\kappa)$.
In particular, these paths are lines.

4: Given any path $\eta_{\lambda}$ such that $\operatorname{Im}\eta_{\lambda
}\subset{\mathcal{C}}$, the set of paths $B(\eta_{\lambda})$ is in general
very difficult to describe (even in the finite type cases). Nevertheless, for
the classical types or type $G_{2}$ and a particular choice of $\eta_{\lambda
}$, the sets $B(\eta_{\lambda})$ can be made explicit by using generalizations
of semistandard tableaux (see for example \cite{Lec} and the references therein).

\bigskip{The height $ht(\eta)$ of a path $\eta\in B(\eta_{\lambda})$ is the
length of any path in $B(\eta_{\lambda})$ from $\eta_{\lambda}$ to $\eta$. For
any $a\geq0$, we denote by $B(\eta_{\lambda})_{a}$ the set of paths in
$B(\eta_{\lambda})$ at height $a.$ Each subset $B(\eta_{\lambda})_{a}$ is
finite and we have%
\begin{equation}
B(\eta_{\lambda})={\displaystyle\bigsqcup\limits_{a\geq0}}B(\eta_{\lambda
})_{a}.\label{dec_depth}%
\end{equation}
By Proposition \ref{prop-ac_etil}, $ht(\eta)$ is equal to the number of simple
roots appearing in the decomposition of $\mathrm{wt}(\eta_{\lambda
})-\mathrm{wt}(\eta)$ on the basis $\{\alpha_{1},\ldots,\alpha_{n}\}$.}

\section{Random paths and symmetrizable Kac-Moody algebras}

\subsection{Probability distribution on elementary paths}

\label{Probability distribution on elementary paths}

Consider $\kappa\in P_{+}$ and a path $\pi_{\kappa}\in{\mathcal{P}}$ from $0$
to $\kappa$ such that $\operatorname{Im}\pi_{\kappa}$ is contained in
${\mathcal{C}}$. Let $B(\pi_{\kappa})$ be the connected component of
${\mathcal{P}}$ containing $\pi_{\kappa}$.\ We now endow $B(\pi_{\kappa})$
with a probability distribution $p_{\kappa}$, which will be characterized by
the datum of a $n$-tuple $\tau=(\tau_{1},\ldots,\tau_{n})\in{\mathbb{R}}%
_{>0}^{n}$ (each $\tau_{i}$ can be regarded as attached to the positive simple
root $\alpha_{i}$). For any $u=u_{1}\alpha_{1}+\cdots+u_{n}\alpha_{n}\in Q$,
we set $\tau^{u}=\tau_{1}^{u_{1}}\cdots\tau_{n}^{u_{n}}$. Let $\pi\in
B(\pi_{\kappa})$: by assertion (1) of Theorem \ref{Th_Littel}, one gets
\[
\pi(1)={\mathrm{wt}}(\pi)=\kappa-\sum_{i=1}^{n}u_{i}(\pi)\alpha_{i}%
\]
where $u_{i}(\pi)\in{\mathbb{N}}$ for any $i=1,\ldots,n$. We have $S_{\kappa
}(\tau):=S_{\kappa}(\tau_{1},\ldots,\tau_{n})=\sum_{\pi\in B(\pi_{\kappa}%
)}\tau^{\kappa-{\mathrm{wt}}(\pi)}.$

\begin{proposition}
\label{Prop_Conv} For any $\kappa\in P_{+}$,

\begin{enumerate}
\item if $A$ is of finite type then $0<S_{\kappa}(\tau)<\infty$ for any
$\tau\in{\mathbb{R}}_{>0}^{n},$

\item if $A$ is of affine type then $0<S_{\kappa}(\tau)<\infty$ for any
$\tau\in]0,1[^{n}$,

\item if $A$ is of indefinite type then $0<S_{\kappa}(\tau)<\infty$ for any
$\tau\in]0,\frac{1}{n}[^{n}$.
\end{enumerate}
\end{proposition}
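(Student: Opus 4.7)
The positivity $S_\kappa(\tau)>0$ is automatic in all three cases: since $\tau\in\mathbb{R}_{>0}^n$ and $\pi_\kappa\in B(\pi_\kappa)$ contributes the term $\tau^{\kappa-\mathrm{wt}(\pi_\kappa)}=\tau^0=1$, we get $S_\kappa(\tau)\geq 1$. So the real work is to prove finiteness. For this I will systematically use the majorization (\ref{inequality}), namely $S_\kappa(\tau)\leq S_\infty(\tau)=\prod_{\alpha\in R_+}(1-\tau^\alpha)^{-m_\alpha}$, and show that under the appropriate hypothesis on $\tau$ this infinite product converges.

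\textbf{Finite type.} Here $R_+$ is finite, so $S_\infty(\tau)$ is a finite product of factors $(1-\tau^\alpha)^{-m_\alpha}$ with $\alpha\in Q_+\setminus\{0\}$; in fact $B(\eta_\kappa)$ is finite so $S_\kappa$ is actually a polynomial in the $\tau_i$, which settles this case for any $\tau\in\mathbb{R}_{>0}^n$ (one only has to observe that $1-\tau^\alpha\neq 0$ is not needed since $S_\kappa$ itself is polynomial).

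\textbf{Affine type.} For $\tau\in(0,1)^n$ and any $\alpha\in R_+$ (which lies in $Q_+$), one has $\tau^\alpha<1$, so every factor of the product is well-defined and positive. A standard estimate (taking logarithms and using $-\log(1-x)\leq 2x$ for $x\leq 1/2$) reduces convergence of $S_\infty(\tau)$ to convergence of the series
\[
\Sigma(\tau):=\sum_{\alpha\in R_+} m_\alpha\,\tau^\alpha,
\]
up to finitely many problematic terms where $\tau^\alpha$ is close to $1$ (these contribute a finite amount and can be separated out). I then bound $\Sigma(\tau)$ using (\ref{PosAffRoots}) and (\ref{multiaffine}). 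For the imaginary roots one has $\sum_{k\geq 1} m_{k\delta}\tau^{k\delta}\leq n\,\tau^\delta/(1-\tau^\delta)$, which is finite because $\tau^\delta<1$. For the real roots, using $R_+^{re}\subset\{\alpha+k\delta: \alpha\in R^\circ,\;k\in\mathbb{Z}_{>0}\}\cup R_+^\circ$ (with the minor extra term for $A_{2n}^{(2)}$ handled analogously), one partitions the sum by the finite set of residues $\alpha\in R^\circ$ and gets geometric series $\tau^\alpha\sum_{k\geq 1}(\tau^\delta)^k$, each finite because $\tau^\delta<1$.

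\textbf{Indefinite type.} Here I invoke the crude but uniform bound (\ref{mag_mult}): $m_\alpha\leq n^{ht(\alpha)}$. For $\tau\in(0,1/n)^n$, each $\alpha\in R_+$ satisfies $n^{ht(\alpha)}\tau^\alpha=\prod_i(n\tau_i)^{a_i}$ where $\alpha=\sum a_i\alpha_i$, so
\[
\Sigma(\tau)\leq\sum_{\alpha\in Q_+\setminus\{0\}}\prod_i(n\tau_i)^{a_i}=\prod_{i=1}^n\frac{1}{1-n\tau_i}-1<\infty.
\]
Moreover for such $\tau$ one has $\tau^\alpha\leq n^{-ht(\alpha)}\leq 1/n\leq 1/2$ (the case $n=1$ is finite type), so the logarithmic bound $-\log(1-\tau^\alpha)\leq 2\tau^\alpha$ applies uniformly, whence $\log S_\infty(\tau)\leq 2\Sigma(\tau)<\infty$.

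\textbf{Expected difficulty.} The finite and indefinite cases are essentially formal once the right bound is set up. The genuinely delicate point is the affine case, where the structure of $R_+^{re}$ must be used correctly: the elements $\alpha\in R^\circ$ appearing in the decomposition of a positive real root can themselves be negative roots of $R^\circ$ (so $\tau^\alpha$ may exceed $1$), and one must check that this is compensated by $(\tau^\delta)^k$ for $k$ large. This is exactly what makes the hypothesis $\tau\in(0,1)^n$ (hence $\tau^\delta<1$), rather than merely $\tau\in\mathbb{R}_{>0}^n$, necessary and sufficient for convergence.
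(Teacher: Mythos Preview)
Your proof is correct and follows essentially the same strategy as the paper: positivity is immediate, the finite case is trivial, and for the infinite cases you bound $S_\kappa(\tau)$ by $S_\infty(\tau)=\prod_{\alpha\in R_+}(1-\tau^\alpha)^{-m_\alpha}$ via (\ref{inequality}) and then control the latter using the affine root description (\ref{PosAffRoots})--(\ref{multiaffine}) or the crude multiplicity bound (\ref{mag_mult}). The only differences are cosmetic: the paper first replaces $\tau$ by $\bar\tau=\max_i\tau_i$ and works with the single-variable product $S_\infty^*(\bar\tau)=\prod_\alpha(1-\bar\tau^{ht(\alpha)})^{-m_\alpha}$ directly, whereas you keep the multivariate $\tau$, take logarithms, and reduce to convergence of the sum $\Sigma(\tau)=\sum_\alpha m_\alpha\tau^\alpha$; your indefinite-type estimate $\Sigma(\tau)\le\prod_i(1-n\tau_i)^{-1}-1$ is in fact a bit cleaner than the paper's bound $\prod_k(1-\bar\tau^k)^{-n^k(k+1)^n}$.
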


\begin{proof}
\noindent The inequality $S_{\kappa}(\tau)>0$ is immediate since $\tau_{i}>0$
for any $i=1,\ldots,n$. When $A$ is of finite type, the crystal $B(\pi
_{\kappa})$ is finite, so that $S_{\kappa}(\tau)<\infty$. When $A$ is not of
finite type, let $\bar{\tau}=\max(\tau_{i},i=1,\ldots,n)$.\ We have by
(\ref{inequality})%

\[
S_{\kappa}(\tau)\leq S_{\infty}(\tau)=\prod_{\alpha\in R_{+}}\frac{1}%
{(1-\tau^{\alpha})^{m_{\alpha}}}\leq\prod_{\alpha\in R_{+}}\frac{1}%
{(1-\bar{\tau}^{ht(\alpha)})^{m_{\alpha}}}%
\]
and it suffices to prove that
\begin{equation}
S_{\infty}^{\ast}(\bar{\tau})=S_{\infty}(\bar{\tau},\ldots,\bar{\tau}%
)=\prod_{\alpha\in R_{+}}\frac{1}{(1-\bar{\tau}^{ht(\alpha)})^{m_{\alpha}}%
}<+\infty.\label{S*}%
\end{equation}

$\bullet$ Assume first that $A$ is of affine type different from $A_{2n}%
^{(2)}$. By (\ref{PosAffRoots}) and (\ref{multiaffine}), we have
\[
\prod_{\alpha\in R_{+}}\frac{1}{(1-\bar{\tau}^{ht(\alpha)})^{m_{\alpha}}}%
\leq\left(  \prod_{\alpha\in R_{+}^{\circ}}\frac{1}{1-\bar{\tau}^{ht(\alpha)}%
}\right)  \left(  \prod_{k=1}^{+\infty}\frac{1}{(1-\bar{\tau}^{kht(\delta
)})^{n}}\right)  \prod_{\alpha\in R^{\circ}}\left(  \prod_{k=1}^{+\infty}%
\frac{1}{1-\bar{\tau}^{ht(\alpha+kr\delta)}}\right)
\]
since $0<\bar{\tau}<1$ for any $\alpha\in R_{+}$ and $R^{\circ}$ is finite. We
have to prove that the infinite products in the above expression are finite.
Since $ht(\delta)\geq n$, we have $\bar{\tau}^{h(\delta)}\leq\bar{\tau}^{n}$;
moreover $\alpha+kr\delta\in Q_{+}$ for any $k\geq1$ and $\alpha\in R^{\circ}%
$.\ We therefore get
\[
\prod_{k=1}^{+\infty}\frac{1}{(1-\bar{\tau}^{kht(\delta)})^{n}}\leq\left(
\prod_{k=1}^{+\infty}\frac{1}{1-\bar{\tau}^{kn}}\right)  ^{n}<+\infty
\]
since the series $\sum_{k=1}^{+\infty}\ln(1-\bar{\tau}^{kn})$ converges for
$\bar{\tau}^{n}\in]0,1[$. Similarly, since $\bar{\tau}^{rn}\in]0,1[$ one gets
\[
\prod_{k=1}^{+\infty}\frac{1}{1-\bar{\tau}^{ht(\alpha+kr\delta)}}\leq
\prod_{k=1}^{+\infty}\frac{1}{1-\bar{\tau}^{ht(\alpha)}\bar{\tau}^{krn}%
}<+\infty.
\]
The case $A_{2n}^{(2)}$ is obtained by the same arguments.

$\bullet$ Secondly, assume that $A$ is of indefinite type. By (\ref{mag_mult}%
), we have $\displaystyle S_{\infty}(\bar{\tau})\leq\prod_{\alpha\in R_{+}%
}\left(  {\frac{1}{1-\bar{\tau}^{ht(\alpha)}}}\right)  ^{n^{ht(\alpha)}}%
$.\ Moreover, since $0<\bar{\tau}<1$ for any $\beta\in Q_{+}$ and
$R_{+}\subset Q_{+}$, we have also
\[
S_{\infty}^{\ast}(\bar{\tau})\leq\prod_{\overset{\beta\in Q_{+}}{\beta\neq0}%
}\frac{1}{(1-\bar{\tau}^{ht(\beta)})^{n^{ht(\beta)}}}=\prod_{k=1}^{+\infty
}\prod_{\overset{\beta\in Q_{+}}{ht(\beta)=k}}\frac{1}{(1-\bar{\tau}%
^{k})^{n^{k}}}%
\]
with
\[
\prod_{\overset{\beta\in Q_{+}}{ht(\beta)=k}}\frac{1}{(1-\bar{\tau}%
^{k})^{n^{k}}}\leq\left(  {\frac{1}{1-\bar{\tau}^{k}}}\right)  ^{(k+1)^{n}%
n^{k}}%
\]
since ${\mathrm{card}}(\{\beta\in Q_{+}\mid ht(\beta)=k\})\leq(k+1)^{n}$. We
thus get
\[
S_{\infty}^{\ast}(\bar{\tau})\leq\prod_{k=1}^{+\infty}\frac{1}{(1-\bar{\tau
}^{k})^{n^{k}(k+1)^{n}}}<+\infty
\]
using the fact that the series $\sum_{k=1}^{+\infty}n^{k}(k+1)^{n}\ln
(1-\bar{\tau}^{k})$ converges for $\bar{\tau}\in]0,\frac{1}{n}[$.
\end{proof}

\bigskip

{The previous proposition has three important corollaries. First set
$T_{\kappa}(\tau):=T_{\kappa}(\tau_{1},\ldots,\tau_{n})=\sum_{\pi\in
B(\pi_{\kappa})}ht(\pi)\tau^{\kappa-{\mathrm{wt}}(\pi)}$.}

\begin{corollary}
\label{Cor_T}For any $\kappa\in P_{+}$,

\begin{enumerate}
\item if $A$ is of finite type then $0<T_{\kappa}(\tau)<\infty$ for any
$\tau\in{\mathbb{R}}_{>0}^{n},$

\item if $A$ is of affine type then $0<T_{\kappa}(\tau)<\infty$ for any
$\tau\in]0,1[^{n}$,

\item if $A$ is of indefinite type then $0<T_{\kappa}(\tau)<\infty$ for any
$\tau\in]0,\frac{1}{n}[^{n}$.
\end{enumerate}
\end{corollary}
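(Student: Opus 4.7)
The plan is to reduce Corollary \ref{Cor_T} directly to Proposition \ref{Prop_Conv} by replacing the linear weight $ht(\pi)$ by an exponential majorant. By the last remark of Section 3.3, for every $\pi\in B(\pi_{\kappa})$ one has
\[
\kappa-{\mathrm{wt}}(\pi)=\sum_{i=1}^{n}u_{i}(\pi)\alpha_{i}\qquad\text{with}\qquad ht(\pi)=u_{1}(\pi)+\cdots+u_{n}(\pi),
\]
so $T_{\kappa}(\tau)=\sum_{\pi\in B(\pi_{\kappa})} ht(\pi)\,\tau^{\kappa-{\mathrm{wt}}(\pi)}$ is the same series as $S_{\kappa}(\tau)$ but weighted by $ht(\pi)$.

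The key input is the elementary inequality obtained from Bernoulli: for every $\varepsilon>0$ and every integer $k\geq 0$,
\[
k\leq\frac{(1+\varepsilon)^{k}}{\varepsilon}.
\]
Applied to $k=ht(\pi)=\sum_{i}u_{i}(\pi)$, this gives
\[
ht(\pi)\,\tau^{\kappa-{\mathrm{wt}}(\pi)}\;\leq\;\frac{1}{\varepsilon}\,(1+\varepsilon)^{ht(\pi)}\prod_{i=1}^{n}\tau_{i}^{u_{i}(\pi)}=\frac{1}{\varepsilon}\prod_{i=1}^{n}\bigl((1+\varepsilon)\tau_{i}\bigr)^{u_{i}(\pi)},
\]
so summing over $\pi\in B(\pi_{\kappa})$ yields
\[
T_{\kappa}(\tau)\;\leq\;\frac{1}{\varepsilon}\,S_{\kappa}\bigl((1+\varepsilon)\tau\bigr).
\]

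It therefore suffices to choose $\varepsilon>0$ small enough so that $(1+\varepsilon)\tau$ still lies in the convergence domain of Proposition \ref{Prop_Conv}; this is possible in all three cases because these domains, namely ${\mathbb{R}}_{>0}^{n}$, $]0,1[^{n}$ and $]0,\tfrac{1}{n}[^{n}$, are open and each $\tau$ in the statement lies in the corresponding domain. Proposition \ref{Prop_Conv} then gives $S_{\kappa}((1+\varepsilon)\tau)<+\infty$, whence $T_{\kappa}(\tau)<+\infty$. Positivity is immediate: each $\tau_{i}$ is positive, and as soon as $\kappa\neq 0$ there exists at least one $\pi\in B(\pi_{\kappa})$ with $ht(\pi)\geq 1$ (take for instance $\tilde f_{i}\pi_{\kappa}$ for an index $i$ such that $\kappa(h_{i})>0$). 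The only step requiring care is checking that the three domains of Proposition \ref{Prop_Conv} are indeed open and stable under a small dilation, which is transparent, so no real obstacle arises.
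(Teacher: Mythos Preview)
Your argument is correct and uses a genuinely different route from the paper's own proof. The paper first reduces to a single variable by setting $\bar{\tau}=\max_{i}\tau_{i}$ and observing that $S_{\kappa}^{\ast}(\bar{\tau})=\sum_{a\geq 0}m(a)\bar{\tau}^{a}$ (with $m(a)=\operatorname{card}B(\pi_{\kappa})_{a}$) converges on the relevant open interval; it then invokes the standard fact that a power series and its term-by-term derivative have the same radius of convergence to conclude that $T_{\kappa}^{\ast}(\bar{\tau})=\sum_{a\geq 0}a\,m(a)\bar{\tau}^{a}$ also converges, and finally bounds $T_{\kappa}(\tau)\leq T_{\kappa}^{\ast}(\bar{\tau})$. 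Your approach bypasses the one-variable reduction entirely: you dominate the linear factor $ht(\pi)$ by the exponential $(1+\varepsilon)^{ht(\pi)}/\varepsilon$ via Bernoulli, obtaining $T_{\kappa}(\tau)\leq\varepsilon^{-1}S_{\kappa}((1+\varepsilon)\tau)$ directly in the multivariate setting, and then exploit openness of the convergence domain. Both arguments ultimately rest on the same principle---there is room to absorb a polynomial weight inside an open domain of exponential convergence---but yours is slightly more direct and has the advantage of generalizing immediately if one wanted to treat weights of the form $ht(\pi)^{k}$ for any fixed $k$ (just iterate the trick or use $k^{k}\leq k!\,e^{k}\leq (1+\varepsilon)^{k}/\varepsilon^{k}$ variants). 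The paper's approach, on the other hand, makes the connection with the derivative of $S_{\kappa}^{\ast}$ explicit, which is conceptually natural. Your positivity remark (taking $\tilde f_{i}\pi_{\kappa}$ for some $i$ with $\kappa(h_{i})>0$) is also slightly more explicit than the paper's, which leaves this point implicit.
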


\begin{proof}
This is clear when $A$ is of finite type. For assertion 2 and 3, let
$\bar{\tau}=\max(\tau_{i},i=1,\ldots,n)$.\ In the previous proof, we have
established that $S_{\infty}^{\ast}(\bar{\tau})$ is finite. Set $S_{\kappa
}^{\ast}(\bar{\tau})=S_{\kappa}(\bar{\tau},\ldots,\bar{\tau})$. Since
$S_{\kappa}^{\ast}(\bar{\tau})\leq S_{\infty}^{\ast}(\bar{\tau})$, the series
$S_{\kappa}^{\ast}(\bar{\tau})$ is also finite. This means that for any
$\bar{\tau}\in]0,1[$ we have%
\[
S_{\kappa}^{\ast}(\bar{\tau})=\sum_{\pi\in B(\pi_{\kappa})}\bar{\tau}%
^{ht(\pi)}=\sum_{a\geq0}m(a)\bar{\tau}^{a}<+\infty
\]
where $m(a)$ is the number of paths in $B(\pi_{\kappa})_{a}$ (see
(\ref{dec_depth})). It follows that $T_{\kappa}^{\ast}(\bar{\tau})=\sum
_{a\geq0}am(a)\bar{\tau}^{a}$ is also finite for any $\bar{\tau}\in]0,1[$. Now
we have
\[
T_{\kappa}(\tau)\leq T_{\kappa}(\bar{\tau},\ldots,\bar{\tau})=\sum_{\pi\in
B(\pi_{\kappa})}ht(\pi)\bar{\tau}^{ht(\pi)}=T_{\kappa}^{\ast}(\bar{\tau
})<+\infty.
\]

\end{proof}

\bigskip

\textbf{From now on}, we write ${\mathcal{T}}$ for the set of $n$-tuples
$\tau=(\tau_{1},\cdots,\tau_{n})\in{\mathbb{R}}_{>0}^{n}$ such that

$\bullet\quad\tau_{i}\in]0,1[$ for $1\leq i\leq n$ when $A$ is of finite or
affine type,

$\bullet\quad\tau_{i}\in]0,{\frac{1}{n}}[$ for $1\leq i\leq n$ when $A$ is of
indefinite type.

\begin{corollary}
\label{Cor_Cvm} For any $\mu\in P_{+}$ and $w\in{\mathsf{W}}$, the weight
$\mu+\rho-w(\mu+\rho)$ belongs to $Q_{+};$ moreover, for $\tau\in{\mathcal{T}%
}$, one gets
\[
\left\vert \sum_{w\in{\mathsf{W}}}\varepsilon(w)\tau^{\mu+\rho-w(\mu+\rho
)}\right\vert \leq\sum_{w\in{\mathsf{W}}}\tau^{\mu+\rho-w(\mu+\rho)}<+\infty.
\]

\end{corollary}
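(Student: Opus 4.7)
The statement decomposes into two parts: (a) $\mu+\rho - w(\mu+\rho) \in Q_+$ for every $w \in \mathsf{W}$, and (b) the unsigned series $\sum_{w \in \mathsf{W}} \tau^{\mu+\rho - w(\mu+\rho)}$ is finite for $\tau \in \mathcal{T}$; the desired bound then follows at once from the triangle inequality $|\sum_w \varepsilon(w)\tau^{\mu+\rho-w(\mu+\rho)}| \leq \sum_w \tau^{\mu+\rho-w(\mu+\rho)}$.

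For (a), I would argue by induction on the length $\ell(w)$. The case $w=e$ is trivial. For the inductive step, factor $w = s_i w'$ with $\ell(w) = \ell(w')+1$; the standard Coxeter fact then gives $(w')^{-1}(\alpha_i) \in R_+^{re}$. Setting $\lambda = \mu+\rho \in P_+$, the defining formula (\ref{defSi}) yields
\[
\lambda - w\lambda \;=\; (\lambda - w'\lambda) \;+\; \langle w'\lambda,\alpha_i^\vee\rangle\,\alpha_i,
\]
in which the first summand lies in $Q_+$ by induction. For the second, I would use the identity $\langle w'\lambda,\alpha_i^\vee\rangle = \langle \lambda, ((w')^{-1}\alpha_i)^\vee\rangle$ together with the standard Kac-Moody fact (see \cite{KacB}, Prop.~3.12) that a dominant weight pairs non-negatively with the coroots of positive real roots; this makes the coefficient of $\alpha_i$ non-negative.

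For (b), the key observation is that $\mu+\rho$ is regular, in the sense that $(\mu+\rho)(h_i) \geq 1 > 0$ for every $i$. By the standard description of Weyl-group stabilizers in the Kac-Moody setting (\cite{KacB}, Prop.~3.12(c)), the stabilizer of $\mu+\rho$ in $\mathsf{W}$ is generated by the simple reflections which fix it, and is therefore trivial. Consequently $w \mapsto \mu+\rho - w(\mu+\rho)$ is an injection of $\mathsf{W}$ into $Q_+$, yielding the crude term-by-term majoration
\[
\sum_{w \in \mathsf{W}} \tau^{\mu+\rho - w(\mu+\rho)} \;\leq\; \sum_{\beta \in Q_+} \tau^{\beta} \;=\; \prod_{i=1}^n \frac{1}{1-\tau_i}.
\]
For $\tau \in \mathcal{T}$ every $\tau_i$ lies in $(0,1)$, so the right-hand side is finite, which completes (b) and hence the proof.

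The main obstacle is really invoking the two Kac-Moody facts cited from \cite{KacB}: non-negative pairing of dominant weights with positive real coroots, and triviality of the stabilizer of a regular dominant weight. Both are elementary in the finite-dimensional case but require genuine Kac-Moody input here; once they are available, the remainder is a short combinatorial manipulation using only the geometric series $\sum_{\beta \in Q_+} \tau^\beta = \prod_i (1-\tau_i)^{-1}$.
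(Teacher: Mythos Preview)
Your argument is correct, but it takes a different route from the paper on both points.

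For part (a), the paper argues via the Weyl--Kac character formula: writing
\[
e^{-\mu}s_{\mu}=\frac{\sum_{w\in\mathsf{W}}\varepsilon(w)e^{w(\mu+\rho)-\rho-\mu}}{\prod_{\alpha\in R_{+}}(1-e^{-\alpha})^{m_{\alpha}}},
\]
both $e^{-\mu}s_{\mu}$ and the denominator are formal series in the $e^{-\alpha_{i}}$, which forces each exponent $\mu+\rho-w(\mu+\rho)$ in the numerator to lie in $Q_{+}$. Your length-induction is more hands-on and relies instead on the Coxeter exchange condition together with the non-negative pairing of dominant weights against positive real coroots.

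For part (b), the paper observes that each $w(\mu+\rho)$ is a weight of $V(\mu+\rho)$ (indeed an extremal one, with multiplicity $1$), so that $\sum_{w\in\mathsf{W}}e^{w(\mu+\rho)-\mu-\rho}\leq e^{-\mu-\rho}s_{\mu+\rho}$; specializing and invoking Proposition~\ref{Prop_Conv} then gives $\sum_{w}\tau^{\mu+\rho-w(\mu+\rho)}\leq S_{\mu+\rho}(\tau)<+\infty$. Your approach is more elementary: regularity of $\mu+\rho$ makes $w\mapsto\mu+\rho-w(\mu+\rho)$ injective into $Q_{+}$, and the crude majoration by $\sum_{\beta\in Q_{+}}\tau^{\beta}=\prod_{i}(1-\tau_{i})^{-1}$ already converges for $\tau\in\mathcal{T}$. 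This bypasses Proposition~\ref{Prop_Conv} entirely, at the cost of a looser bound; the paper's route ties the estimate back into the representation-theoretic machinery already developed and yields the sharper majorant $S_{\mu+\rho}(\tau)$.
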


\begin{proof}
By the Weyl-Kac character formula, one gets
\[
e^{-\mu}s_{\mu}=\frac{\sum_{w\in{\mathsf{W}}}\varepsilon(w)e^{w(\mu+\rho
)-\rho-\mu}}{\prod_{\alpha\in R_{+}}(1-e^{-\alpha})^{m_{\alpha}}}.
\]
Since $e^{-\mu}s_{\mu}$ and $\prod_{\alpha\in R_{+}}(1-e^{-\alpha}%
)^{m_{\alpha}}$ are polynomial in $e^{-\beta}$ with $\beta\in Q_{+}$, we have
$\mu+\rho-w(\mu+\rho)\in Q_{+}$ for any $w\in{\mathsf{W}}$.\ Now observe that
$\mu+\rho$ belongs to $P_{+}$ and is the dominant weight of $V(\mu+\rho)$,
each $w(\mu+\rho)$ is thus also a weight of $V(\mu+\rho)$. Therefore, the
coefficients of the decomposition of $s_{\mu+\rho}-\sum_{w\in{\mathsf{W}}%
}e^{w(\mu+\rho)}$ on the basis $\{e^{\beta}\mid\beta\in P\}$ are nonnegative;
in other words $\displaystyle \sum_{w\in{\mathsf{W}}}e^{w(\mu+\rho)}\leq
s_{\mu+\rho}$ which readily implies that $\sum_{w\in{\mathsf{W}}}e^{w(\mu
+\rho)-\mu-\rho}\leq e^{-\mu-\rho}s_{\mu+\rho}$. By specializing
$e^{-\alpha_{i}}=\tau_{i}$, one gets $\sum_{w\in{\mathsf{W}}}\tau^{\mu
+\rho-w(\mu+\rho)}\leq S_{\mu+\rho}(\tau)<+\infty$.
\end{proof}

\begin{definition}
We define the probability distribution $p$ on $B(\pi_{\kappa})$setting
$\displaystyle p_{\pi}=\frac{\tau^{\kappa-{\mathrm{wt}}(\pi)}}{S_{\kappa}%
(\tau)}.$
\end{definition}

\noindent\textbf{Remark:} By Assertion 3 of Theorem \ref{Th_Littel}, for
$\pi_{\kappa}^{\prime}$ another elementary path from $0$ to $\kappa$ such that
$\operatorname{Im}\pi_{\kappa}^{\prime}$ is contained in ${\mathcal{C}}$,
there exists an isomorphism $\Theta$ between the crystals $B(\pi_{\kappa})$
and $B(\pi_{\kappa}^{\prime})$ and one gets $p_{\pi}=p_{\Theta(\pi)}$ for any
$\pi\in B(\pi_{\kappa})$. Therefore, the probability distributions we use on
the graph $B(\pi_{\kappa})$ are invariant by crystal isomorphisms. \bigskip

Let $X$ a random variable with values in $B(\pi_{\kappa})$ and probability
distribution $p$; as a direct consequence of Proposition \ref{Prop_Conv}, we
get the

\begin{corollary}
\label{Cor_mexists}The variable $X$ admits a moment of order $1$. Moreover the
series of functions
\[
m=\sum_{\pi\in B(\pi_{\kappa})}p_{\pi}\pi
\]
converges uniformly on $[0,1]$.
\end{corollary}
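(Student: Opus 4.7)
The plan is to bound $\|\pi\|_\infty$ uniformly in terms of the height $\mathrm{ht}(\pi)$ and then apply Corollary~\ref{Cor_T} to control the sum $\sum_{\pi}p_{\pi}\|\pi\|_\infty$.

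First I would exploit the remark following Proposition~\ref{prop-ac_etil}, namely equation~(\ref{func_g}): for any elementary path $\eta$ and any $i$, one has $\eta(t)-\tilde f_{i}(\eta)(t)=g(t)\alpha_{i}$ with $g$ piecewise linear, nondecreasing, $g(0)=0$ and $g(1)=1$. In particular $|g(t)|\le 1$ for $t\in[0,1]$, so
\[
\|\tilde f_{i}(\eta)(t)\|_{2}\le \|\eta(t)\|_{2}+\|\alpha_{i}\|_{2}\qquad\text{for all }t\in[0,1].
\]
Setting $C:=\max_{1\le i\le n}\|\alpha_{i}\|_{2}$, every $\pi\in B(\pi_\kappa)$ can by Theorem~\ref{Th_Littel}(1) be written $\pi=\tilde f_{i_{1}}\cdots\tilde f_{i_{k}}\pi_{\kappa}$; since each $\tilde f_{i}$ lowers the weight by exactly one simple root, the minimal such $k$ equals $\mathrm{ht}(\pi)$. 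Iterating the bound above yields
\[
\|\pi\|_{\infty}\le \|\pi_{\kappa}\|_{\infty}+C\cdot \mathrm{ht}(\pi).
\]

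Next, multiplying by $p_\pi$ and summing over $\pi\in B(\pi_\kappa)$, I would obtain
\[
\sum_{\pi\in B(\pi_{\kappa})}p_{\pi}\|\pi\|_{\infty}\le \|\pi_{\kappa}\|_{\infty}\sum_{\pi}p_{\pi}+C\sum_{\pi}p_{\pi}\mathrm{ht}(\pi)=\|\pi_{\kappa}\|_{\infty}+\frac{C\,T_{\kappa}(\tau)}{S_{\kappa}(\tau)}.
\]
The right-hand side is finite thanks to Corollary~\ref{Cor_T} (here we use the standing hypothesis $\tau\in\mathcal{T}$). This shows $\mathbb{E}(\|X\|)=\sum_{\pi}p_{\pi}\|\pi\|_{\infty}<+\infty$, i.e.\ $X$ admits a moment of order $1$.

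Finally, for the uniform convergence of the series of functions $m=\sum_{\pi}p_{\pi}\pi$ on $[0,1]$, I would apply the Weierstrass M-test: for each $\pi$ and each $t\in[0,1]$, $\|p_{\pi}\pi(t)\|_{2}\le p_{\pi}\|\pi\|_{\infty}$, and the numerical series $\sum_{\pi}p_{\pi}\|\pi\|_{\infty}$ is finite by the previous step. Hence $\sum_{\pi}p_{\pi}\pi(t)$ converges normally on $[0,1]$, which gives uniform convergence. The only nontrivial point in the argument is the linear-in-height control $\|\pi\|_\infty\le\|\pi_\kappa\|_\infty+C\,\mathrm{ht}(\pi)$; everything else reduces to Corollary~\ref{Cor_T} and elementary majorations.
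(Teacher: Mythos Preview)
Your proof is correct and follows essentially the same approach as the paper: both arguments use equation~(\ref{func_g}) to obtain the linear bound $\|\pi\|_{\infty}\le\|\pi_{\kappa}\|_{\infty}+C\,\mathrm{ht}(\pi)$ with $C=\max_i\|\alpha_i\|$, and then invoke the finiteness of $S_{\kappa}(\tau)$ and $T_{\kappa}(\tau)$ from Proposition~\ref{Prop_Conv} and Corollary~\ref{Cor_T} to control the sum. The only cosmetic difference is that the paper writes out the explicit formula $\pi(t)=\pi_{\kappa}(t)-\sum_{j}g_{j}(t)\alpha_{i_{j}}$ before bounding, whereas you iterate the triangle inequality directly; your phrasing of the final step via the Weierstrass $M$-test is also slightly more explicit than the paper's.
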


\begin{proof}
We can decompose $B(\pi_{\kappa})=\bigsqcup\limits_{a\geq0}B(\pi_{\kappa}%
)_{a}$ as in (\ref{dec_depth}). Then, we get for any $t\in\lbrack0,1]$%
\[
m(t)=\sum_{a\geq0}\sum_{\pi\in B(\pi_{\kappa})_{a}}p_{\pi}\pi(t).
\]
Consider $\pi\in B(\pi_{\kappa})_{a}$ and set $\pi=\tilde{f}_{i_{1}}%
\cdots\tilde{f}_{i_{a}}(\pi_{\kappa})$. By (\ref{func_g}) and an immediate
induction, there exist increasing piecewise linear maps $g_{1},\ldots,g_{a}$
from $[0,1]$ to itself with $g_{k}(0)=0$ and $g_{k}(1)=1$ for any
$k=1,\ldots,a$ such that
\[
\pi(t)=\pi_{\kappa}(t)-(g_{1}(t)\alpha_{i_{1}}+\cdots+g_{a}(t)\alpha_{i_{a}}).
\]
In particular $\left\Vert \pi(t)-\pi_{\kappa}(t)\right\Vert \leq\left\Vert
\alpha_{1}\right\Vert +\cdots+\left\Vert \alpha_{a}\right\Vert \leq Ca$ where
$C=\max_{\alpha\in\pi}\left\Vert \alpha\right\Vert $ is the norm of the
longest simple root. We thus get%
\[
\left\Vert \pi(t)\right\Vert \leq\left\Vert \pi_{\kappa}(t)\right\Vert
+\left\Vert \pi(t)-\pi_{\kappa}(t)\right\Vert \leq M+Cht(\pi)
\]
where $M=\max_{t\in\lbrack0,1]}\left\Vert \pi_{\kappa}(t)\right\Vert $. We
obtain $\max_{t\in\lbrack0,1]}\left\Vert p_{\pi}\pi(t)\right\Vert
\leq(M+Cht(\pi)\frac{\tau^{\kappa-{\mathrm{wt}}(\pi)}}{S_{\kappa}(\tau)}.$ But
the series
\[
S_{\kappa}(\tau)=\sum_{\pi\in B(\pi_{\kappa})}\tau^{\kappa-{\mathrm{wt}}(\pi
)}\text{ and }T_{\kappa}(\tau)=\sum_{\pi\in B(\pi_{\kappa})}ht(\pi
)\tau^{\kappa-{\mathrm{wt}}(\pi)}%
\]
converge by Proposition \ref{Prop_Conv} and Corollary \ref{Cor_T}. This means
that the series of functions $m$ converges uniformly on $[0,1]$.
\end{proof}

\subsection{Random paths of arbitrary length}

\label{Subsec-RPW}We now extend the notion of elementary random paths. Assume
that $\pi_{1},\ldots,\pi_{\ell}$ a family of elementary paths; the path
$\pi_{1}\otimes\cdots\otimes\pi_{\ell}$ of length $\ell$ is defined by: for
all $k\in\{1,\ldots,\ell-1\}$ and $t\in\lbrack k,k+1]$
\begin{equation}
\label{tens_path}\pi_{1}\otimes\cdots\otimes\pi_{\ell}(t)=\pi_{1}%
(1)+\cdots+\pi_{k}(1)+\pi_{k+1}(t-k).
\end{equation}
Let $B^{\otimes\ell}(\pi_{\kappa})$ be the set of paths of the form $b=\pi
_{1}\otimes\cdots\otimes\pi_{\ell}$ where $\pi_{1},\ldots,\pi_{\ell}$ are
elementary paths in $B(\pi_{\kappa})$; there exists a bijection $\Delta$
between $B^{\otimes\ell}(\pi_{\kappa})$ and the set $B^{\ast\ell}(\pi_{\kappa
})$ of paths in ${\mathcal{P}}$ obtained by concatenations of $\ell$ paths of
$B(\pi_{\kappa})$:
\begin{equation}
\Delta:\left\{
\begin{array}
[c]{clc}%
B^{\otimes\ell}(\pi_{\kappa}) & \longrightarrow & B^{\ast\ell}(\pi_{\kappa})\\
\pi_{1}\otimes\cdots\otimes\pi_{\ell} & \longmapsto & \pi_{1}\ast\cdots\ast
\pi_{\ell}%
\end{array}
\right.  . \label{DefDelta}%
\end{equation}
In fact $\pi_{1}\otimes\cdots\otimes\pi_{\ell}$ and $\pi_{1}\ast\cdots\ast
\pi_{\ell}$ coincide up to a reparametrization and we define the weight of
$b=\pi_{1}\otimes\cdots\otimes\pi_{\ell}$ setting
\[
{\mathrm{wt}}(b):={\mathrm{wt}}(\pi_{1})+\cdots+{\mathrm{wt}}(\pi_{\ell}%
)=\pi_{1}(1)+\cdots+\pi_{\ell}(1).
\]
We now endow $B^{\otimes\ell}(\pi_{\kappa})$ with the product probability
measure $p^{\otimes\ell}$ defined by
\begin{equation}
p^{\otimes\ell}(\pi_{1}\otimes\cdots\otimes\pi_{\ell})=p(\pi_{1})\cdots
p(\pi_{\ell})=\frac{\tau^{\ell\kappa-(\pi_{1}(1)+\cdots\pi_{\ell}(1))}%
}{S_{\kappa}(\tau)^{\ell}}=\frac{\tau^{\ell\kappa-{\mathrm{wt}}(b)}}%
{S_{\kappa}(\tau)^{\ell}}. \label{potimesell}%
\end{equation}
In particular, for any $b,b^{\prime}$ in $B^{\otimes\ell}(\pi_{\kappa})$ such
that ${\mathrm{wt}}(b)={\mathrm{wt}}(b^{\prime})$, one gets
\[
p^{\otimes l}(b)=p^{\otimes l}(b^{\prime}).
\]
Write $\Pi_{\ell}:B^{\otimes\ell}(\pi_{\kappa})\rightarrow B^{\otimes\ell
-1}(\pi_{\kappa})$ the projection defined by $\Pi_{\ell}(\pi_{1}\otimes
\cdots\otimes\pi_{\ell-1}\otimes\pi_{\ell})=\pi_{1}\otimes\cdots\otimes
\pi_{\ell-1}$; the sequence $(B^{\otimes\ell}(\pi_{\kappa}),\Pi_{\ell
},p^{\otimes\ell})_{\ell\geq1}$ is a projective system of probability spaces.
We denote by $(B^{\otimes{\mathbb{N}}}(\pi_{\kappa}),p^{\otimes{\mathbb{N}}})$
its injective limit; the elements of $B^{\otimes{\mathbb{N}}}(\pi_{\kappa})$
are infinite sequences $b=(\pi_{\ell})_{\ell\geq1}$ and by a slight abuse of
notation, we will also write $\Pi_{\ell}(b)=\pi_{1}\otimes\cdots\otimes
\pi_{\ell}$.\ 

Now let $X=(X_{\ell})_{\ell\geq1}$ a sequence of i.i.d. random variables with
values in $B(\pi_{\kappa})$ and probability distribution $p$; the random path
${\mathcal{W}}$ on $(B^{\otimes{\mathbb{N}}}(\pi_{\kappa}),p^{\otimes
{\mathbb{N}}})$ are thus defined by
\[
{\mathcal{W}}(t):=\Pi_{\ell}(X)(t)=X_{1}\otimes X_{2}\otimes\cdots\otimes
X_{\ell-1}\otimes X_{\ell}(t){\text{ for }}t\in\lbrack\ell-1,\ell].
\]
By (\ref{tens_path}), the path ${\mathcal{W}}$ coincides with the one defined
in \S \ \ref{subsec_RandPath}.

\begin{proposition}
\ \label{Prop_util}

\begin{enumerate}
\item For any $\beta,\eta\in P$, one gets
\[
{\mathbb{P}}(W_{\ell+1}=\beta\mid W_{\ell}=\eta)=K_{\kappa,\beta-\eta,}%
\frac{\tau^{\kappa+\eta-\beta}}{S_{\kappa}(\tau)}.
\]

\item Consider $\lambda,\mu\in P^{+}$ we have
\[
{\mathbb{P}}(W_{\ell}=\lambda,W_{0}=\mu,{\mathcal{W}}(t)\in{\mathcal{C}%
}\ {\text{ for any }}t\in\lbrack0,\ell])=f_{\lambda/\mu}^{\ell}\frac
{\tau^{\ell\kappa+\mu-\lambda}}{S_{\kappa}(\tau)^{\ell}}.
\]
In particular
\[
{\mathbb{P}}(W_{\ell+1}=\lambda,W_{\ell}=\mu,{\mathcal{W}}(t)\in{\mathcal{C}%
}\ {\text{ for any }}t\in\lbrack\ell,\ell+1])=m_{\mu,\kappa}^{\lambda}%
\frac{\tau^{\kappa+\mu-\lambda}}{S_{\kappa}(\tau)}.
\]

\end{enumerate}
\end{proposition}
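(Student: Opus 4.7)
\textbf{Plan for Proposition \ref{Prop_util}.} Part (1) is essentially an immediate computation. Since the increments are i.i.d., $W_{\ell+1}-W_\ell=X_{\ell+1}(1)=\mathrm{wt}(X_{\ell+1})$ is independent of $W_\ell$, so
\[
{\mathbb{P}}(W_{\ell+1}=\beta\mid W_\ell=\eta)={\mathbb{P}}(X_{\ell+1}(1)=\beta-\eta)=\sum_{\pi\in B(\pi_\kappa),\,\pi(1)=\beta-\eta} p_\pi.
\]
Each $p_\pi$ equals $\tau^{\kappa-(\beta-\eta)}/S_\kappa(\tau)$ by definition, and by assertions (4)--(5) of Theorem \ref{Th_Littel} the cardinality of $\{\pi\in B(\pi_\kappa)\mid\pi(1)=\beta-\eta\}$ equals the weight multiplicity $K_{\kappa,\beta-\eta}=\dim V(\kappa)_{\beta-\eta}$. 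Multiplying the common value by this count yields the stated formula.

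For Part (2), the strategy is to identify the event probabilistically in terms of a combinatorial set already counted by Littelmann's theory. Under ${\mathbb{P}}_\mu$, the trajectory $\mathcal{W}$ on $[0,\ell]$ is determined by the sequence $(X_1,\ldots,X_\ell)\in B(\pi_\kappa)^\ell$, and via the bijection $\Delta$ of (\ref{DefDelta}) together with the prepended path $\eta_\mu$ (which stays in ${\mathcal{C}}$ by hypothesis), we obtain the concatenation $\eta_\mu\ast X_1\ast\cdots\ast X_\ell\in B(\eta_\mu)\ast B(\eta_\kappa)^{\ast\ell}$. Because $\eta_\mu$ already lies in ${\mathcal{C}}$, the event $\{\mathcal{W}(t)\in{\mathcal{C}}\text{ for all }t\in[0,\ell],\,W_0=\mu\}$ is equivalent to this concatenation being contained in ${\mathcal{C}}$. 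By Proposition \ref{Prop_HP}(3), this in turn is equivalent to the concatenation being annihilated by every $\tilde e_i$. Together with the endpoint condition $W_\ell=\lambda$, the admissible sequences $(X_1,\ldots,X_\ell)$ are thus in bijection with $H^\ell_{\lambda/\mu}$, whose cardinality is $f^\ell_{\lambda/\mu}$ by Theorem \ref{Th_Littel}(10).

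To complete the computation, I use (\ref{potimesell}): every tensor path $b=X_1\otimes\cdots\otimes X_\ell$ with $\mathrm{wt}(b)=\lambda-\mu$ has the same probability $\tau^{\ell\kappa-(\lambda-\mu)}/S_\kappa(\tau)^\ell=\tau^{\ell\kappa+\mu-\lambda}/S_\kappa(\tau)^\ell$. Multiplying this common value by the count $f^\ell_{\lambda/\mu}$ yields the claimed formula. The ``in particular'' specialization follows by applying Part~(2) with $\ell=1$ and then using the translation invariance (\ref{invariancetranslation}) together with the Markov property to shift from $[0,1]$ to $[\ell,\ell+1]$; the identification $m^\lambda_{\mu,\kappa}=f^1_{\lambda/\mu}$ is the definition (\ref{def_multi}).

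The main obstacle is the combinatorial identification in Part~(2). I must verify carefully that the first factor in the relevant concatenation is forced to be exactly $\eta_\mu$ and not an arbitrary element of $B(\eta_\mu)$: probabilistically this is clear because the starting segment of $\mathcal{W}$ under ${\mathbb{P}}_\mu$ is the fixed curve $\eta_\mu$; combinatorially this is precisely the content of the last sentence of Theorem \ref{Th_Littel}(10), which says that any highest-weight path of the form $\eta\ast\eta_1\ast\cdots\ast\eta_\ell$ ending at $\lambda$ automatically has $\eta=\eta_\mu$. Once this pairing is in place, the remaining steps are a routine translation between the probabilistic and combinatorial descriptions, together with the key fact that $p^{\otimes\ell}$ depends on $b$ only through $\mathrm{wt}(b)$.
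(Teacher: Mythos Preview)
Your proposal is correct and follows essentially the same approach as the paper's proof: for Part~(1) you sum the common probability $p_\pi=\tau^{\kappa+\eta-\beta}/S_\kappa(\tau)$ over the $K_{\kappa,\beta-\eta}$ paths of weight $\beta-\eta$, and for Part~(2) you identify the event with the set $H^\ell_{\lambda/\mu}$ via the bijection $\Delta$ and Theorem~\ref{Th_Littel}(10), then multiply the count $f^\ell_{\lambda/\mu}$ by the common value from (\ref{potimesell}). Your treatment is in fact slightly more careful than the paper's (which cites the wrong assertion number and is terser about why the first factor is forced to be $\eta_\mu$), but the argument is the same.
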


\begin{proof}
1. We have
\[
{\mathbb{P}}(W_{\ell+1}=\beta\mid W_{\ell}=\eta)=\sum_{\pi\in B(b_{\pi
})_{\beta-\eta}}p_{\pi}
\]
where $B(b_{\pi})_{\beta-\eta}$ is the set of paths in $B(b_{\pi})$ of weight
$\beta-\eta$. We conclude noticing that all the paths in $B(b_{\pi}%
)_{\beta-\eta}$ have the same probability $\frac{\tau^{\kappa+\eta-\beta}%
}{S_{\kappa}(\tau)}$ and ${\mathrm{card}}(B(b_{\pi})_{\beta-\eta}%
)=K_{\kappa,\beta-\eta}$.

2. By Assertion 7 of Theorem \ref{Th_Littel}, we know that the number of paths
in $B(\pi_{\mu})\ast B^{\ast\ell}(\pi_{\kappa})$ starting at $\mu$, ending at
$\lambda$ and remaining in ${\mathcal{C}}$ is equal to $f_{\lambda/\mu}^{\ell
}$. Since the map $\Delta$ defined in (\ref{DefDelta}) is a bijection, the
integer $f_{\lambda/\mu}^{\ell}$ is also equal to the number of paths in
$B(\pi_{\mu})\otimes B^{\otimes\ell}(\pi_{\kappa})$ starting at $\mu$, ending
at $\lambda$ and remaining in ${\mathcal{C}}$. Moreover, each such path has
the form $b=b_{\mu}\otimes b_{1}\otimes\cdots\otimes b_{\ell}$ where
$b_{1}\otimes\cdots\otimes b_{\ell}\in B^{\otimes\ell}(\pi_{\kappa})$ has
weight $\lambda-\mu$. Therefore we have $p_{b}=\frac{\tau^{\ell\kappa
+\mu-\lambda}}{S_{\kappa}(\tau)^{\ell}}$.
\end{proof}

\subsection{The generalized Pitman transform}

\label{subsec_Pitman}By Assertion 8 of Theorem \ref{Th_Littel}, we know that
$B^{\otimes\ell}(\pi_{\kappa})$ is contained in ${\mathcal{P}}_{\min
{\mathbb{Z}}}$.\ Therefore, if we consider a path $b\in B^{\otimes\ell}%
(\pi_{\kappa}),$ its connected component $B(b)$ is contained in ${\mathcal{P}%
}_{\min{\mathbb{Z}}}$.\ Now, if $\eta\in B(b)$ is such that $\tilde{e}%
_{i}(\eta)=0$ for any $i=1,\ldots,n$, we should have $\operatorname{Im}%
\eta\subset{\mathcal{C}}$ by Assertion 3 of Proposition \ref{Prop_HP};
Assertion 1 of Theorem \ref{Th_Littel} thus implies that $\eta$ is the unique
path in $B(b)=B(\eta)$ such that $\tilde{e}_{i}(\eta)=0$ for any
$i=1,\ldots,n$.\ This permits to define the \emph{generalized Pitman}
transform on $B^{\otimes\ell}(\pi_{\kappa})$ as the map ${\mathfrak{P}}$ which
associates to any $b\in B^{\otimes\ell}(\pi_{\kappa})$ the unique path
${\mathfrak{P}}(b)\in B(b)$ such that $\tilde{e}_{i}(\eta)=0$ for any
$i=1,\ldots,n$.\ By definition, we have $\operatorname{Im}{\mathfrak{P}%
}(b)\subset{\mathcal{C}}$ and ${\mathfrak{P}}(b)(\ell)\in P_{+}$.

Let ${\mathcal{W}}$ be the random path of \S \ \ref{Subsec-RPW}.\ We define
the random process ${\mathcal{H}}$ setting
\begin{equation}
{\mathcal{H}}(t)={\mathfrak{P}}(\Pi_{\ell}({\mathcal{W}})){\mathcal{(}%
}t){\text{ for any }}t\in\lbrack\ell-1,\ell]. \label{defH}%
\end{equation}
For any $\ell\geq1$, we set $H_{\ell}:={\mathcal{H}}(\ell)$; one gets the

\begin{theorem}
\label{Th_LawH}The random sequence $H:=(H_{\ell})_{\ell\geq1}$ is a Markov
chain with transition matrix
\begin{equation}
\Pi(\mu,\lambda)=\frac{S_{\lambda}(\tau)}{S_{\kappa}(\tau)S_{\mu}(\tau)}%
\tau^{\kappa+\mu-\lambda}m_{\mu,\kappa}^{\lambda} \label{MatrixH}%
\end{equation}
where $\lambda,\mu\in P_{+}$.
\end{theorem}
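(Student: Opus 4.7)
The plan is to identify $H_\ell$ as the highest weight of the crystal component of $b_\ell:=X_1\otimes\cdots\otimes X_\ell$ and then exploit the key fact that the probability $p^{\otimes\ell}(b)=\tau^{\ell\kappa-\mathrm{wt}(b)}/S_\kappa(\tau)^\ell$ depends on $b$ only through its weight. Combined with the weight-preservation of crystal isomorphisms (assertions (3), (5) and (7) of Theorem~\ref{Th_Littel}), this \emph{crystal equivariance} of the measure will be the main algebraic input, and will drive both the Markov property and the explicit form of $\Pi(\mu,\lambda)$.

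First I would locate $H_\ell$: since $B^{\otimes\ell}(\pi_\kappa)\subset{\mathcal{P}}_{\min\mathbb{Z}}$ by Theorem~\ref{Th_Littel}(8), the connected component $B(b_\ell)$ is isomorphic to some $B(\eta_\mu)$ with $\mu\in P_+$, and ${\mathfrak{P}}(b_\ell)$ is the unique highest weight path in that component, with endpoint $\mu$. Next, for any connected component $C$ of $B^{\otimes\ell}(\pi_\kappa)$ of type $\mu$, a crystal isomorphism $\theta_C:C\to B(\eta_\mu)$ preserves weights; combined with the character identity of Theorem~\ref{Th_Littel}(5), this yields
\[
\sum_{b\in C}\tau^{-\mathrm{wt}(b)}=\tau^{-\mu}S_\mu(\tau),
\]
so the total $p^{\otimes\ell}$-mass of $C$ equals $\tau^{\ell\kappa-\mu}S_\mu(\tau)/S_\kappa(\tau)^\ell$, and the normalized conditional law of $b_\ell$ given $\{b_\ell\in C\}$ is transported by $\theta_C$ to the natural distribution $p_\mu$ on $B(\eta_\mu)$, independently of the choice of $C$.

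To establish the Markov property, I would observe that $\{H_k=\mu_k,\ k=1,\ldots,\ell\}$ decomposes as a disjoint union of events of the form $\{b_k\in C_k,\ k=1,\ldots,\ell\}$, each fixing a terminal component $C_\ell$ of type $\mu_\ell$. On any such event, $b_\ell$ is distributed on $C_\ell$ by the weight-proportional law above, and the independent $X_{\ell+1}$ follows $p_\kappa$; hence, via the crystal isomorphism $\theta_{C_\ell}\otimes\mathrm{id}$, the pair $(b_\ell,X_{\ell+1})$ has law $p_{\mu_\ell}\otimes p_\kappa$ on $B(\eta_{\mu_\ell})\otimes B(\eta_\kappa)$. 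Since $H_{\ell+1}$ is a crystal-intrinsic quantity (the highest weight of the connected component of $b_\ell\otimes X_{\ell+1}$), its conditional law depends only on $\mu_\ell$, which yields the Markov property.

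The transition probability then follows from a direct calculation using the crystal decomposition $B(\eta_\mu)\otimes B(\eta_\kappa)\cong\bigsqcup_\lambda B(\eta_\lambda)^{\sqcup m_{\mu,\kappa}^\lambda}$ provided by Theorem~\ref{Th_Littel}(9): for each type-$\lambda$ component $D$, the same weight-equivariance argument gives $(p_\mu\otimes p_\kappa)(D)=\tau^{\mu+\kappa-\lambda}S_\lambda(\tau)/(S_\mu(\tau)S_\kappa(\tau))$, and summing over the $m_{\mu,\kappa}^\lambda$ components of type $\lambda$ produces \eqref{MatrixH}. The main obstacle is to justify carefully that conditioning on the \emph{refined} history $(H_1,\ldots,H_\ell)$ (rather than merely on $H_\ell$) leaves the weight-proportional law on the current component undisturbed; this will rest on the facts that $p^{\otimes\ell}$ depends on $b$ solely through $\mathrm{wt}(b)$, and that any two components of the same type are isomorphic as weight-decorated crystals, so the decomposition of $B(\eta_{\mu_\ell})\otimes B(\eta_\kappa)$ into components of each type $\lambda$ is intrinsic to $(\mu_\ell,\kappa)$ alone.
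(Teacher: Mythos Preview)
Your proposal is correct and follows essentially the same approach as the paper: both rest on the key observation that the $p^{\otimes\ell}$-mass of a connected component of $B^{\otimes\ell}(\pi_\kappa)$ depends only on its highest weight, namely $\tau^{\ell\kappa-\lambda}S_\lambda(\tau)/S_\kappa(\tau)^\ell$. The paper organizes the computation by directly evaluating the joint probability $\mathbb{P}(H_{\ell+1}=\lambda,\,H_k=\mu^{(k)}\ \forall k)$ as a sum over highest-weight paths, counting them by the product $\prod_k m_{\mu^{(k)},\kappa}^{\mu^{(k+1)}}\cdot m_{\mu,\kappa}^{\lambda}$ and then dividing by the analogous expression for the marginal; your conditional-law-plus-crystal-isomorphism presentation is a repackaging of the same computation in which that product cancels implicitly, and the one point you flag as the ``main obstacle'' is handled in the paper's argument by the (implicit) fact that the tensor-product rule \eqref{Ten_Prod} forces the truncation $\Pi_\ell$ of a connected component of $B^{\otimes(\ell+1)}(\pi_\kappa)$ to lie in a single component of $B^{\otimes\ell}(\pi_\kappa)$, so the event $\{H_1=\mu_1,\ldots,H_\ell=\mu_\ell\}$ is a union of full components.
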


\begin{proof}
Consider $\mu=\mu^{(\ell)},\mu^{(\ell-1)},\ldots,\mu^{(1)}$ a sequence of
elements in $P_{+}$.\ Let ${\mathcal{S}}(\mu^{(1)},\ldots\mu^{(\ell)}%
,\lambda)$ be the set of paths $b^{h}\in B^{\otimes\ell}(\pi_{\kappa})$
remaining in ${\mathcal{C}}$ and such that $b^{h}(k)=\mu^{(k)},k=1,\ldots
,\ell$ and $b^{(\ell+1)}=\lambda$. Consider $b=b_{1}\otimes\cdots\otimes
b_{\ell}\otimes b_{\ell+1}\in B^{\otimes\ell+1}(\pi_{\kappa})$. We have
${\mathfrak{P}}(b_{1}\otimes\cdots\otimes b_{k})(k)=\mu^{(k)}$ for any
$k=1,\ldots,\ell$ and ${\mathfrak{P}}(b)(\ell+1)=\lambda$ if and only if
${\mathfrak{P}}(b)\in{\mathcal{S}}(\mu^{(1)},\ldots\mu^{(\ell)},\lambda)$.
Moreover, by (\ref{potimesell}), for any $b^{h}\in S(\mu^{(1)},\ldots
\mu^{(\ell)},\lambda)$, we have $\displaystyle {\mathbb{P}}(b\in
B(b^{h}))=\sum_{b\in B(b^{h})}p_{b}=\sum_{b\in B(b^{h})}\frac{\tau
^{(\ell+1)\kappa-{\mathrm{wt}}(b)}}{S_{\kappa}(\tau)^{\ell+1}};$ combining
(\ref{slambda}) and (\ref{defS}), one obtains $\displaystyle {\mathbb{P}}(b\in
B(b^{h}))=\frac{\tau^{(\ell+1)\kappa-\lambda}S_{\lambda}(\tau)}{S_{\kappa
}(\tau)^{\ell+1}},$ which only depends on $\lambda$. This gives\
\begin{align*}
{\mathbb{P}}(H_{\ell+1}=\lambda,H_{k}=\mu^{(k)},\forall k=1,\ldots,\ell)  &
=\sum_{b^{h}\in{\mathcal{S}}(\mu^{(1)},\ldots\mu^{(\ell)},\lambda)}\sum_{b\in
B(b^{h})}p_{b}\\
&  ={\mathrm{card}}({\mathcal{S}}(\mu^{(1)},\ldots\mu^{(\ell)},\lambda
))\frac{\tau^{(\ell+1)\kappa-\lambda}S_{\lambda}(\tau)}{S_{\kappa}(\tau
)^{\ell+1}}.
\end{align*}
By assertion 9 of Theorem \ref{Th_Littel} and an easy induction, we have also
\[
{\mathrm{card}}({\mathcal{S}}(\mu^{(1)},\ldots\mu^{(\ell)},\lambda
))=\prod_{k=1}^{\ell-1}m_{\mu^{(k)},\kappa}^{\mu^{(k+1)}}\times m_{\mu,\kappa
}^{\lambda}.
\]
We thus get
\[
{\mathbb{P}}(H_{\ell+1}=\lambda,H_{k}=\mu^{(k)},\forall k=1,\ldots,\ell
)=\prod_{k=1}^{\ell-1}m_{\mu^{(k)},\kappa}^{\mu^{(k+1)}}\times m_{\mu,\kappa
}^{\lambda}\frac{\tau^{(\ell+1)\kappa-\lambda}S_{\lambda}(\tau)}{S_{\kappa
}(\tau)^{\ell+1}}.
\]
Similarly
\[
{\mathbb{P}}(H_{k}=\mu^{(k)},\forall k=1,\ldots,\ell)=\prod_{k=1}^{\ell
-1}m_{\mu^{(k)},\kappa}^{\mu^{(k+1)}}\frac{\tau^{\ell\kappa-\mu}S_{\lambda
}(\tau)}{S_{\kappa}(\tau)^{\ell}},
\]
this readily implies
\begin{align*}
{\mathbb{P}}(H_{\ell+1}=\lambda\mid H_{k}=\mu^{(k)},\forall k=1,\ldots,\ell)
&  =\frac{{\mathbb{P}}(H_{\ell+1}=\lambda,H_{k}=\mu^{(k)},\forall
k=1,\ldots,\ell)}{{\mathbb{P}}(H_{k}=\mu^{(k)},\forall k=1,\ldots,\ell)}\\
&  =\frac{S_{\lambda}(\tau)}{S_{\kappa}(\tau)S_{\mu}(\tau)}\tau^{\kappa
+\mu-\lambda}m_{\mu,\kappa}^{\lambda}.
\end{align*}

\end{proof}

\section{Symmetrization}

In \S \ \ref{Probability distribution on elementary paths}, we have chosen a
probability distribution $p$ on a crystal $B(\pi_{\kappa})$ where $\kappa\in
P_{+}$ and $\pi_{\kappa}$ is an elementary path from $0$ to $\kappa$ remaining
in the cone ${\mathcal{C}}$.\ This distribution depends on $\tau\in
{\mathbb{R}}_{>0}^{n}$ and Proposition \ref{Prop_Conv} gives a sufficient
condition to ensure that $S_{\kappa}(\tau)$ is finite.\ Since the characters
of the highest weight representations are symmetric under the action of the
Weyl group, it is possible to define, starting from the distribution $p$ and
for each $w$ in the Weyl group ${\mathsf{W}}$ of ${\mathfrak{g}}$, a
probability distribution $p_{w}$ which reflects this symmetry.

\subsection{Twisted probability distribution}

Recall $\tau=(\tau_{1},\ldots,\tau_{n})\in{\mathcal{T}}$ is fixed. Given any
$w\in{\mathsf{W}}$, we want to define a probability distribution on
$B(\kappa)$ for each $w\in{\mathsf{W}}$. Recall that $w(\alpha_{i})$ is a
(real) root of ${\mathfrak{g}}$ for any $w\in{\mathsf{W}}$ and any simple root
$\alpha_{i}$; this root is neither simple or even positive in general. By
general properties of the root systems, we know that $w(\alpha_{i})$ can be
decomposed as follows
\[
w(\alpha_{i})=\left\{
\begin{array}
[c]{c}%
\alpha_{k_{1}}+\cdots+\alpha_{k_{r}}\\
{\text{or}}\\
-(\alpha_{k_{1}}+\cdots+\alpha_{k_{r}})
\end{array}
\right.
\]
where $\alpha_{k_{1}},\ldots,\alpha_{k_{r}}$ are simple roots depending on
$w$. Let us define the $n$-tuple $\tau^{w}=(\tau_{1}^{w},\ldots,\tau_{n}%
^{w})\in{\mathbb{R}}_{>0}^{n}$ setting
\[
\tau_{i}^{w}=\left\{
\begin{array}
[c]{lll}%
\prod_{s=1}^{r}\tau_{k_{s}} & {\text{ if }} & w(\alpha_{i})=\alpha_{k_{1}%
}+\cdots+\alpha_{k_{r}},\\
\prod_{s=1}^{r}\tau_{k_{s}}^{-1} & {\text{ if }} & w(\alpha_{i})=-(\alpha
_{k_{1}}+\cdots+\alpha_{k_{r}}),
\end{array}
\right.
\]
that is
\begin{equation}
\tau_{i}^{w}=\tau^{w(\alpha_{i})}. \label{ti(w)}%
\end{equation}
More generally for any $\bar{u}=u_{1}\alpha_{1}+\cdots+u_{n}\alpha_{n}\in Q,$
we have
\[
(\tau^{w})^{\bar{u}}=(\tau_{1}^{w})^{u_{1}}\cdots(\tau_{n}^{w})^{u_{n}}%
=\tau^{w(\bar{u})}.
\]
Observe also that $\tau^{w}\notin{\mathcal{T}}$ in general; indeed we have the following

\begin{lemma}
\label{Lem_ti}$\tau(w)\in{\mathcal{T}}$ if and only if $w=1$.
\end{lemma}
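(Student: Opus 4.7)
The plan is to prove the ``if'' direction trivially and reduce the ``only if'' direction to a standard fact about reduced expressions in the Weyl group, namely that any $w \neq 1$ sends at least one simple root to a negative root.

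\textbf{Setup and easy direction.} If $w = 1$, then $w(\alpha_i) = \alpha_i$ for every $i$, so $\tau_i^w = \tau_i$ by definition (\ref{ti(w)}), hence $\tau^w = \tau \in \mathcal{T}$ by assumption. For the converse, I will prove the contrapositive: if $w \neq 1$ then at least one coordinate $\tau_i^w$ is $\geq 1$, so $\tau^w$ cannot belong to $\mathcal{T}$ (whose components all lie in $]0,1[$, resp.\ $]0,1/n[$).

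\textbf{Key combinatorial claim.} Assume $w \neq 1$ and fix a reduced expression $w = s_{i_1} \cdots s_{i_r}$ with $r = \ell(w) \geq 1$. I will invoke the standard fact (valid in any Coxeter group, in particular for Weyl groups of symmetrizable Kac--Moody algebras, cf.\ \cite{KacB}) that
\[
w(\alpha_{i_r}) \in R_-.
\]
The short argument is: $w(\alpha_{i_r}) = s_{i_1} \cdots s_{i_{r-1}} s_{i_r}(\alpha_{i_r}) = - s_{i_1} \cdots s_{i_{r-1}}(\alpha_{i_r})$, and the element $s_{i_1} \cdots s_{i_{r-1}}(\alpha_{i_r})$ is a positive (real) root because otherwise the expression $s_{i_1} \cdots s_{i_r}$ would not be reduced. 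Hence there exists a simple root $\alpha_i$ (namely $\alpha_{i_r}$) such that $w(\alpha_i) \in R_-$.

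\textbf{Conclusion.} Since $w(\alpha_i) \in R_- \subset -Q_+$, it can be written as $w(\alpha_i) = -(\alpha_{k_1} + \cdots + \alpha_{k_s})$ with all $\alpha_{k_j}$ simple (and $s \geq 1$). By the definition of $\tau^w$, this gives
\[
\tau_i^w = \tau^{w(\alpha_i)} = \prod_{j=1}^{s} \tau_{k_j}^{-1}.
\]
Because $\tau \in \mathcal{T}$, each $\tau_{k_j}$ lies in $]0,1[$, so each factor $\tau_{k_j}^{-1}$ is strictly greater than $1$, whence $\tau_i^w > 1$. Consequently $\tau^w \notin \mathcal{T}$, as required.

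\textbf{Anticipated obstacle.} There is no real technical difficulty here; the only point to be careful about is to invoke the reduced-expression fact in the right form for Kac--Moody Weyl groups (not just finite Weyl groups). Since the Weyl group of a symmetrizable Kac--Moody algebra is a Coxeter group and real roots behave under $\mathsf{W}$ just as roots do in the finite-dimensional case, this presents no issue, and the rest of the proof is a direct computation using the definition (\ref{ti(w)}).
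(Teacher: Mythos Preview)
Your proof is correct and follows essentially the same route as the paper: both reduce to the standard fact (Lemma~3.11 in \cite{KacB}) that for $w\neq 1$ with reduced expression $s_{i_1}\cdots s_{i_r}$ one has $w(\alpha_{i_r})\in -Q_+$, and then conclude that the corresponding coordinate $\tau_{i_r}^w$ exceeds $1$. The only cosmetic difference is that the paper phrases the argument as a contradiction while you argue directly.
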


\begin{proof}
It suffices to show that for any $w\in{\mathsf{W}}\setminus\{Id\}$ distinct
from the identity, there is at least a simple root $\alpha_{i}$ such that
$w(\alpha_{i})=-(\alpha_{k_{1}}+\cdots+\alpha_{k_{r}})\in-Q_{+}$. Indeed, we
will have in that case $\tau_{i}^{w}=\frac{1}{\tau_{k_{1}}\cdots\tau_{k_{r}}%
}>1$ since $\tau(w)\in{\mathcal{T}}$. Consider $w\in{\mathsf{W}}%
\setminus\{Id\}$ such that $w(\alpha_{i})\in Q_{+}$ for any $i=1,\ldots,\ell$.
Let us decompose $w$ as a reduced word $w=s_{i_{1}}\cdots s_{i_{t}}$; by lemma
3.11 in \cite{KacB}, we must have $w(\alpha_{i_{t}})\in-Q_{+}$, hence a
contradiction. This means that $w=1$.
\end{proof}

\bigskip

Consider $\kappa\in P_{+}$.\ Recall that we have by definition $s_{\kappa
}=e^{\kappa}S_{\kappa}(T_{1},\ldots,T_{n})$ where $T_{i}=e^{-\alpha_{i}}$.
Since $s_{\kappa}$ is symmetric under ${\mathsf{W}}$, we have $s_{\kappa
}=e^{w(\kappa)}S_{\kappa}(T_{1}^{w},\ldots,T_{n}^{w})$ with $T_{i}%
^{w}=e^{-w(\alpha_{i})}$ for any $i=1,\ldots,n$.\ Therefore
\[
S_{\kappa}(T_{1}^{w},\ldots,T_{n}^{w})=e^{\kappa-w(\kappa)}S_{\kappa}%
(T_{1},\ldots,T_{n})\ {\text{ for any }}w\in{\mathsf{W}}.
\]
Since $\kappa-w(\kappa)$ belongs to $Q^{+}$, we can specialize each $T_{i}$ in
$\tau_{i}$. Then $T_{i}^{w}$ is specialized in $\tau_{i}^{w}$ and we get
\begin{equation}
S_{\kappa}(\tau^{w})=\tau^{w(\kappa)-\kappa}S_{\kappa}(\tau),
\label{rela_twist}%
\end{equation}
in particular, it is finite.

\begin{definition}
For any $w\in{\mathsf{W}}$ and any integer $\ell\geq1$, let $p^{w}$ be the
probability distribution on $B(\pi_{\kappa})^{\otimes\ell}$ defined by: for
any $b\in B(\pi_{\kappa})^{\otimes\ell}$
\[
p_{b}^{w}:=\frac{(\tau^{w})^{\ell\kappa-{\mathrm{wt}}(b)}}{S_{\kappa}(\tau
^{w})^{\ell}}=\frac{\tau^{\ell w(\kappa)-{\mathrm{wt}}(w(b))}}{S_{\kappa}%
(\tau^{w})^{\ell}}
\]
where $w(b)$ is the image of $b$ under the action of ${\mathsf{W}}$ (see
Assertion 6 of Theorem \ref{Th_Littel}). In particular, $p^{1}=p$ coincides
with the probability distribution (\ref{potimesell}).
\end{definition}

The following lemma states that the probabilities $p^{w}$ and $p$ coincide up
to the permutation of the elements in $B(\pi_{\kappa})^{\otimes\ell}$ given by
the action of $w$ described in Assertion 6 of Theorem \ref{Th_Littel}.

\begin{lemma}
\label{Lem_fund}For any $w\in{\mathsf{W}}$ and any $b\in B(\pi_{\kappa
})^{\otimes\ell}$, we have $\ p_{b}^{w}=p_{w(b)},\ $ where $w(b)$ is the image
of $b$ under the action of ${\mathsf{W}}$ (see Assertion 6 of Theorem
\ref{Th_Littel}).
\end{lemma}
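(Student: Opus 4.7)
The proof will be a direct computation, unwinding the definition of $p_b^{w}$ and applying three ingredients already established in the paper: the multiplicative rule $(\tau^{w})^{\bar u}=\tau^{w(\bar u)}$ for $\bar u\in Q$, the identity $\mathrm{wt}(w(b))=w(\mathrm{wt}(b))$ from Assertion 6 of Theorem~\ref{Th_Littel}, and the scaling relation (\ref{rela_twist}) for the specialized character.

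The plan is as follows. Fix $w\in\mathsf{W}$, $\ell\geq 1$ and $b\in B(\pi_{\kappa})^{\otimes\ell}$. Starting from
\[
p_{b}^{w}=\frac{(\tau^{w})^{\ell\kappa-\mathrm{wt}(b)}}{S_{\kappa}(\tau^{w})^{\ell}},
\]
I first rewrite the numerator using $(\tau^{w})^{\bar u}=\tau^{w(\bar u)}$, which holds for any $\bar u\in Q$ by the very definition of $\tau^{w}$ (equation (\ref{ti(w)})). Since $\ell\kappa-\mathrm{wt}(b)\in Q$ (by Assertion 1 of Theorem~\ref{Th_Littel}, applied componentwise to $b=\pi_{1}\otimes\cdots\otimes\pi_{\ell}$), this gives
\[
(\tau^{w})^{\ell\kappa-\mathrm{wt}(b)}=\tau^{w(\ell\kappa-\mathrm{wt}(b))}=\tau^{\ell w(\kappa)-w(\mathrm{wt}(b))}.
\]

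Next I invoke Assertion 6 of Theorem~\ref{Th_Littel}, extended to the $\ell$-fold tensor product by additivity of the weight map on concatenations: $\mathrm{wt}(w(b))=w(\mathrm{wt}(b))$. Substituting this in and applying (\ref{rela_twist}) raised to the power $\ell$,
\[
S_{\kappa}(\tau^{w})^{\ell}=\tau^{\ell(w(\kappa)-\kappa)}\,S_{\kappa}(\tau)^{\ell},
\]
the factor $\tau^{\ell w(\kappa)}$ cancels between numerator and denominator, leaving
\[
p_{b}^{w}=\frac{\tau^{\ell\kappa-\mathrm{wt}(w(b))}}{S_{\kappa}(\tau)^{\ell}}=p_{w(b)},
\]
which is the claim.

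The only non-routine point is the extension of Assertion 6 of Theorem~\ref{Th_Littel} from a single crystal $B(\eta_{\lambda})$ to the tensor product $B(\pi_{\kappa})^{\otimes\ell}$; this is standard since the $\mathsf{W}$-action on crystals is compatible with the tensor product rule (\ref{Ten_Prod}) and the weight is additive under $\ast$, so $\mathrm{wt}(w(\pi_{1}\ast\cdots\ast\pi_{\ell}))=\sum_{k}w(\mathrm{wt}(\pi_{k}))=w(\mathrm{wt}(b))$. Everything else is bookkeeping with exponents in $Q$, so I do not expect any real obstacle.
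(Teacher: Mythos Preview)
Your proof is correct and follows essentially the same route as the paper's own argument: both rewrite the numerator via $(\tau^{w})^{\bar u}=\tau^{w(\bar u)}$ together with $\mathrm{wt}(w(b))=w(\mathrm{wt}(b))$, then cancel against the denominator using the scaling relation (\ref{rela_twist}). Your version is slightly more explicit in justifying that $\ell\kappa-\mathrm{wt}(b)\in Q$ and in noting the extension of the $\mathsf{W}$-action to the tensor product, but the substance is identical.
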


\begin{proof}
Recall that ${\mathrm{wt}}(w(b))=w({\mathrm{wt}}(b))$; therefore
$\displaystyle p_{w(b)}=\frac{\tau^{\ell\kappa-{\mathrm{wt}}(w(b))}}%
{S_{\kappa}(\tau)^{\ell}}.$ On the other hand, by (\ref{rela_twist}) we have
$\displaystyle p_{b}^{w}:=\frac{\tau^{\ell w(\kappa)-{\mathrm{wt}}(w(b))}%
}{S_{\kappa}(\tau^{w})^{\ell}}=\frac{\tau^{\ell w(\kappa)-{\mathrm{wt}}%
(w(b))}}{\tau^{\ell w(\kappa)-\ell\kappa}S_{\kappa}(\tau)^{\ell}}$ and the
equality $p_{b}^{w}=p_{w(b)}$ follows.
\end{proof}

\subsection{Twisted random paths}

Let $w\in{\mathsf{W}}$ and denote by $X^{w}$ the random variable defined on
$(B(\pi_{\kappa}),p^{w})$ with law given by:
\[
{\mathbb{P}}(X^{w}=\pi)=p_{\pi}^{w}=p_{w(\pi)}\quad{\text{for all}}\quad\pi\in
B(\pi_{\kappa}).
\]

Set $m^{w}:={\mathbb{E}}(X^{w})$ and $m:=m^{1}$.

\begin{proposition}
Assume $\tau\in{\mathcal{T}}.$ One gets

\begin{enumerate}
\item $m(1)\in\mathring{{\mathcal{C}}}$,

\item $m^{w}=w^{-1}(m)$,

\item $m^{w}(1)\in\mathring{{\mathcal{C}}}$ if and only if $w$ is equal to the identity.
\end{enumerate}
\end{proposition}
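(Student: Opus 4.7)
The proof splits into three steps, with (2) and (1) proved independently, and (3) combining them.

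\textbf{Step for part (2).} I would apply Lemma~\ref{Lem_fund}, which gives $p_\pi^w = p_{w(\pi)}$, where $w$ acts on the crystal $B(\pi_\kappa)$ as in Assertion~6 of Theorem~\ref{Th_Littel}. Because this action is a bijection of $B(\pi_\kappa)$ and satisfies $\mathrm{wt}(w(\pi)) = w(\mathrm{wt}(\pi))$, reindexing the sum by $\pi' = w(\pi)$ gives
\[
m^w(1) = \sum_{\pi \in B(\pi_\kappa)} p_\pi^w\,\mathrm{wt}(\pi) = \sum_{\pi'} p_{\pi'}\,\mathrm{wt}(w^{-1}(\pi')) = w^{-1}\!\Bigl(\sum_{\pi'} p_{\pi'}\,\mathrm{wt}(\pi')\Bigr) = w^{-1}(m(1)).
\]
This establishes (2) at the level of the drift, which is the content needed for (3). (Note that at intermediate times $t \in (0,1)$ the Kashiwara action on paths does not coincide with the pointwise $\mathsf{W}$-action on $\mathfrak{h}^*_\mathbb{R}$, so (2) should be read at $t=1$.)

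\textbf{Step for part (1).} For each simple index $i$, I would decompose $B(\pi_\kappa)$ into orbits under the crystal action of $s_i$ (Assertion~6). Using $\mathrm{wt}(s_i(\pi)) = \mathrm{wt}(\pi) - k\,\alpha_i$ where $k := \mathrm{wt}(\pi)(h_i)$, a non-fixed orbit $\{\pi, s_i(\pi)\}$ with $k \geq 1$ contributes to
\[
S_\kappa(\tau)\, m(1)(h_i) \;=\; \sum_{\pi \in B(\pi_\kappa)} \tau^{\kappa - \mathrm{wt}(\pi)}\,\mathrm{wt}(\pi)(h_i)
\]
the quantity $k\,(1 - \tau_i^k)\,\tau^{\kappa - \mathrm{wt}(\pi)}$, which is strictly positive because $\tau \in \mathcal{T}$ forces $\tau_i \in (0,1)$. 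Fixed points (with $k=0$) contribute zero. This already yields $m(1)(h_i) \geq 0$. For strict positivity I would argue: since $A$ is indecomposable and $\kappa$ is a nontrivial dominant weight, there exists $w \in \mathsf{W}$ with $w(\kappa)(h_i) > 0$; the path $w(\pi_\kappa) \in B(\pi_\kappa)$ then sits in a non-fixed $s_i$-orbit and contributes strictly.

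\textbf{Step for part (3).} Combining the two previous steps, $m^w(1) = w^{-1}(m(1)) \in w^{-1}(\mathring{\mathcal{C}})$. Since $\mathsf{W}$ acts simply transitively on Weyl chambers, distinct open chambers are disjoint and $w^{-1}(\mathring{\mathcal{C}}) = \mathring{\mathcal{C}}$ if and only if $w = 1$. Hence $m^w(1) \in \mathring{\mathcal{C}}$ iff $w = 1$. One can also see the forward implication directly via a sign argument mirroring the proof of Lemma~\ref{Lem_ti}: for $w \neq 1$ one has $w^{-1}(\alpha_i) \in -Q_+$ for some $i$, and the corresponding coordinate of $m^w(1)$ becomes non-positive.

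\textbf{Main obstacle.} The key difficulty is the strict positivity in part (1): the $s_i$-pairing argument is transparent for non-negativity, but strict inequality demands a non-fixed orbit for \emph{every} simple index $i$. This reduces to the representation-theoretic fact that, for $\kappa \neq 0$ and $A$ indecomposable, the support of $V(\kappa)$ meets $\{\beta : \beta(h_i) > 0\}$ for each $i$, which the Weyl-symmetry argument above supplies. The other mild subtlety is the interpretation of part (2) at the endpoint only, as the pathwise identity $m^w(t) = w^{-1}(m(t))$ fails for $t<1$ in the non-minuscule case.
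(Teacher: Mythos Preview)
Your argument is correct and mirrors the paper's: part (2) via Lemma~\ref{Lem_fund} and reindexing, part (1) via a positivity computation along $i$-strings (your $s_i$-orbit pairing is just the paper's sum $\sum_{j=0}^k(k-2j)\tau_i^j$ regrouped into pairs $(j,k-j)$), and part (3) via the fundamental-domain property of $\mathcal{C}$ in the Tits cone. Your remark that (2) should be read at $t=1$ is in fact sharper than the paper's own formulation, which writes $m^w=w^{-1}(m)$ as an identity of paths but only uses it at the endpoint.
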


\begin{proof}
1. By definition of $\mathring{\mathcal{C}}$, we have to prove that
$h_{i}(m(1))>0$ for any $i=1,\ldots,n$.\ Recall that $m=\sum_{\pi\in
B(\pi_{\kappa})}p_{\pi}\pi$; observe that the quantity
\[
c_{i}=h_{i}(m(1))=\sum_{\pi\in B(\pi_{\kappa})}p_{\pi}h_{i}(\pi(1))
\]
is well defined by Corollary \ref{Cor_mexists}. We can decompose the crystal
$B(\pi_{\kappa})$ in its $i$-chains, that is the sub-crystal obtained by
deleting all the arrows $j\neq i$. When ${\mathfrak{g}}$ is not of finite
type, the lengths of these $i$-chains are all finite but not bounded.\ The
contribution to $c_{i}$ of any $i$-chain $C:a_{0}\overset{i}{\rightarrow}%
a_{1}\overset{i}{\rightarrow}\cdots\overset{i}{\rightarrow}a_{k}$ of length
$k$ is equal to $\displaystyle c_{i}(C)=\sum_{j=0}^{k}p_{a_{i}}h_{i}%
({\mathrm{wt}}(a_{j})).$ Since $\tilde{e}_{i}(a_{0})=0$ and $\tilde{f}%
_{i}^{k+1}(a_{0})=0$, we obtain $h_{i}({\mathrm{wt}}(a_{0}))=k$. By definition
of the distribution $p$ and Proposition \ref{prop-ac_etil}, we have the
relation $p_{a_{j}}=\tau_{i}^{j}p_{a_{0}}$.\ Finally, we get
\[
c_{i}(C)=p_{a_{0}}\sum_{j=0}^{k}\tau_{i}^{j}(k-2j)=p_{a_{0}}\sum
_{j=0}^{\left\lfloor k/2\right\rfloor }(k-2j)(\tau_{i}^{j}-\tau_{i}^{k-j}).
\]

In particular the hypothesis $\tau_{i}\in]0,1[$ for any $i=1,\ldots,n$ implies
that $c_{i}(C)>0$ for any $i$-chain of length $k>0$; one thus gets $c_{i}>0$
noticing that $B(\pi_{k})$ contains at least an $i$-chain of length $k>0$,
otherwise the action of the Chevalley generators $e_{i},f_{i}$ on the
irreducible module $V(\pi_{\lambda})$ would be trivial.

2. By Lemma \ref{Lem_fund}, we can write
\[
m^{w}=\sum_{\pi\in B(\pi_{\kappa})}p_{w(\pi)}\pi=\sum_{\pi^{\prime}\in
B(\pi_{\kappa})}p_{\pi^{\prime}}w^{-1}(\pi^{\prime})=w^{-1}\left(  \sum
_{\pi^{\prime}\in B(\pi_{\kappa})}p_{\pi^{\prime}}\pi^{\prime}\right)
=w^{-1}(m)
\]
where we use assertion 7 of Theorem \ref{Th_Littel} in the third equality.

3. Since $m^{w}=w^{-1}(m)$ and $m(1)\in\mathring{{\mathcal{C}}}$, one gets
$m^{w}(1)\notin{\mathcal{C}}$ because ${\mathcal{C}}$ is a fundamental domain
for the action of the Weyl group ${\mathsf{W}}$ on the tits cone
${\mathcal{X}}=\cup_{w\in{\mathsf{W}}}w({\mathcal{C}})$ (see \cite{KacB}
Proposition 3.12).
\end{proof}

\bigskip

Now let $X^{w}=(X_{\ell}^{w})_{\ell\geq1}$ be a sequence of i.i.d. random
variables defined on $B(\pi_{\kappa})$ with probability distribution $p^{w}%
$.\ The random process ${\mathcal{W}}^{w}=({\mathcal{W}}_{t}^{w})_{t>0}$ is
defined by: for all $\ell\geq1$ and $t\in\lbrack\ell-1,\ell]$
\[
{\mathcal{W}}^{w}(t):=\Pi_{\ell}(X^{w})(t)=X_{1}^{w}\otimes X_{2}^{w}%
\otimes\cdots\otimes X_{\ell-1}^{w}\otimes X_{\ell}^{w}(t).
\]
By (\ref{tens_path}), the random walk $W^{w}$ is defined as in
\S \ \ref{subsec_RandPath} from ${\mathcal{W}}^{w}$. For any $\ell
\in{\mathbb{Z}}_{\geq0}$, we also define the function $\psi_{\ell}^{w}$ on
$P_{+}$ setting
\[
\psi_{\ell}^{w}(\mu):={\mathbb{P}}_{\mu}({\mathcal{W}}^{w}(t)\in{\mathcal{C}%
}\ {\text{ for any }}t\in\lbrack0,\ell]).
\]
The quantity $\psi_{\ell}^{w}(\mu)$ is equal to the probability of the event
\textquotedblleft${\mathcal{W}}^{w}$ starting at $\mu$ remains in the cone
${\mathcal{C}}$ until the instant $\ell$\textquotedblright. We also introduce
the function
\[
\psi^{w}(\mu):={\mathbb{P}}_{\mu}({\mathcal{W}}^{w}(t)\in{\mathcal{C}}%
,t\geq0).
\]
For $w=1$, we simply write $\psi$ and $\psi_{\ell}$ instead of $\psi^{1}$ and
$\psi_{\ell}^{1}$.

The following proposition is a consequence of the previous lemma, Proposition
\ref{Prop_lim} and Corollary \ref{Cor_mexists}.

\begin{proposition}
\label{Prop_lim2} \ 

\begin{enumerate}
\item We have $\lim_{\ell\rightarrow+\infty}\psi_{\ell}^{w}(\mu)=\psi^{w}%
(\mu)$ for any $\mu\in P_{+}.$

\item If $w\neq1$, then $\psi^{w}(\mu)=0$ for any $\mu\in P_{+}.$

\item If $w=1$, then $\psi(\mu)\geq0$ for any $\mu\in P_{+}$.
\end{enumerate}
\end{proposition}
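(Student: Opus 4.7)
The three assertions are of increasing difficulty and will be handled separately.

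\emph{Part (3)} is immediate: $\psi(\mu)$ is a probability, so $\psi(\mu)\geq 0$.

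\emph{Part (1)} follows from monotone continuity of probability. Fix $\mu\in P_+$ and $w\in\mathsf{W}$, and consider, under $\mathbb{P}_\mu$, the events
\[
E_\ell^w := \{\mathcal{W}^w(t)\in\mathcal{C}\text{ for any }t\in[0,\ell]\}.
\]
These form a decreasing family of measurable sets whose intersection is exactly $E^w:=\{\mathcal{W}^w(t)\in\mathcal{C}\text{ for any }t\geq 0\}$. Continuity of $\mathbb{P}_\mu$ from above then yields $\psi_\ell^w(\mu)=\mathbb{P}_\mu(E_\ell^w)\searrow \mathbb{P}_\mu(E^w)=\psi^w(\mu)$.

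\emph{Part (2)} is the substantive claim. The plan is to apply Proposition \ref{Prop_lim} with $\mathcal{W}^w$, $W^w$, and $m^w$ replacing $\mathcal{W}$, $W$, and $m$. By assertion~(3) of the preceding proposition, for $w\neq 1$ one has $m^w(1)\notin\mathcal{C}$, hence in particular $m^w(1)\notin\mathring{\mathcal{C}}$. It remains to verify the first-moment hypothesis for $X^w$. Here Lemma \ref{Lem_fund} is the key tool: since $p_\pi^w=p_{w(\pi)}$ and $\pi\mapsto w(\pi)$ is a bijection of $B(\pi_\kappa)$ (assertion~6 of Theorem \ref{Th_Littel}), any expectation $\sum_\pi p_\pi^w f(\pi)$ can be rewritten as $\sum_{\pi'} p_{\pi'} f(w^{-1}(\pi'))$. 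Applied to $f(\pi)=\|\pi(1)\|=\|\mathrm{wt}(\pi)\|$ and using $\mathrm{wt}(w^{-1}(\pi'))=w^{-1}(\mathrm{wt}(\pi'))$, this gives
\[
\mathbb{E}(\|X^w(1)\|) = \sum_{\pi'\in B(\pi_\kappa)} p_{\pi'}\,\|w^{-1}(\mathrm{wt}(\pi'))\| \;\leq\; \|w^{-1}\|_{\mathrm{op}}\cdot\mathbb{E}(\|X(1)\|) < +\infty,
\]
the last inequality using that $w^{-1}$ is a fixed linear endomorphism of $\mathfrak{h}_{\mathbb{R}}^{\ast}$ (with finite operator norm) and that $\mathbb{E}(\|X\|)<+\infty$ by Corollary \ref{Cor_mexists}. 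This is precisely what is needed to run the strong law of large numbers on $W^w$, exactly as in the proof of Proposition \ref{Prop_lim}: one gets $W_\ell^w/\ell\to m^w(1)$ almost surely, and since $\mathcal{C}$ is a closed cone not containing $m^w(1)$, the walk eventually leaves $\mathcal{C}$ almost surely. The domination $\psi^w(\mu)\leq\mathbb{P}_\mu(W_\ell^w\in\mathcal{C}\text{ for all }\ell\geq 0)$ then forces $\psi^w(\mu)=0$.

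The main subtlety I anticipate is the finite-moment check for $X^w$: the $n$-tuple $\tau^w$ lies outside $\mathcal{T}$ in general (Lemma \ref{Lem_ti}), so Corollary \ref{Cor_T} does not directly bound the relevant series through $T_\kappa(\tau^w)$. The way around this is the transfer argument above, which exploits that the Weyl group acts linearly on weights even though its action on paths in $B(\pi_\kappa)$ is only piecewise linear.
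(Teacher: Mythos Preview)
Your proof is correct and follows the same route as the paper, which merely records that the proposition is a consequence of the preceding proposition on $m^{w}$, Proposition~\ref{Prop_lim}, and Corollary~\ref{Cor_mexists}. You are in fact more careful than the paper on one point: since $\tau^{w}\notin\mathcal{T}$ for $w\neq 1$ (Lemma~\ref{Lem_ti}), Corollary~\ref{Cor_mexists} does not directly yield a first-moment bound for $X^{w}$, and your transfer argument via Lemma~\ref{Lem_fund}, bounding $\mathbb{E}(\|X^{w}(1)\|)$ by $\|w^{-1}\|_{\mathrm{op}}\cdot\mathbb{E}(\|X(1)\|)$, fills precisely this gap by checking only the endpoint moment actually used in the strong law of large numbers for $W^{w}$.
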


Similarly to Proposition \ref{Prop_util} and using (\ref{rela_twist}), we
obtain the

\begin{proposition}
\ 

\begin{enumerate}
\item For any weights $\beta$ and $\eta$, one gets
\[
{\mathbb{P}}(W_{\ell+1}^{w}=\beta\mid W_{\ell}^{w}=\eta)=K_{\kappa,\beta
-\eta,}\frac{\tau^{w(\kappa+\eta-\beta)}}{S_{\kappa}(\tau^{w})}=K_{\kappa
,\beta-\eta,}\frac{\tau^{\kappa+w(\eta)-w(\beta)}}{S_{\kappa}(\tau)}.
\]

\item For any dominant weights $\lambda$ and $\mu$, one gets
\[
{\mathbb{P}}(W_{\ell}^{w}=\lambda,W_{0}^{w}=\mu,{\mathcal{W}}^{w}%
(t)\in{\mathcal{C}}\ {\text{ for any }}t\in\lbrack0,\ell])=f_{\lambda/\mu
}^{\ell}\frac{\tau^{w(\ell\kappa+\mu-\lambda)}}{S_{\kappa}(\tau^{w})^{\ell}%
}=f_{\lambda/\mu}^{\ell}\frac{\tau^{\ell\kappa+w(\mu)-w(\lambda)}}{S_{\kappa
}(\tau)^{\ell}}.
\]
In particular
\[
{\mathbb{P}}(W_{\ell+1}^{w}=\lambda,W_{\ell}^{w}=\mu,{\mathcal{W}}^{w}%
(t)\in{\mathcal{C}}\ {\text{ for any }}t\in\lbrack\ell,\ell+1])=m_{\mu,\kappa
}^{\lambda}\frac{\tau^{w(\kappa+\mu-\lambda)}}{S_{\kappa}(\tau^{w})}%
=m_{\mu,\kappa}^{\lambda}\frac{\tau^{\kappa+w(\mu)-w(\lambda)}}{S_{\kappa
}(\tau)}.
\]

\end{enumerate}
\end{proposition}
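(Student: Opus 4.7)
The plan is to replay the proof of Proposition \ref{Prop_util} with the twisted distribution $p^{w}$ in place of $p$, and then convert the resulting expression into the asserted form by means of the identity $S_{\kappa}(\tau^{w})=\tau^{w(\kappa)-\kappa}S_{\kappa}(\tau)$ from (\ref{rela_twist}). The two formulas presented in each item of the statement are genuinely two faces of the same identity, one written in terms of $\tau^{w}$ and one in terms of $\tau$, so the task splits into (a) carrying out the combinatorial count exactly as in Proposition \ref{Prop_util}, and (b) translating between the two forms via (\ref{rela_twist}). Nothing seriously new is required beyond a careful bookkeeping of weights.

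For assertion (1), I first write
\[
{\mathbb{P}}(W_{\ell+1}^{w}=\beta\mid W_{\ell}^{w}=\eta)=\sum_{\pi\in B(\pi_{\kappa})_{\beta-\eta}}p^{w}_{\pi}
\]
where $B(\pi_{\kappa})_{\beta-\eta}$ denotes the paths in $B(\pi_{\kappa})$ of weight $\beta-\eta$. By definition of $p^{w}$, every such $\pi$ has the same twisted probability $(\tau^{w})^{\kappa-(\beta-\eta)}/S_{\kappa}(\tau^{w})$; since ${\mathrm{card}}(B(\pi_{\kappa})_{\beta-\eta})=K_{\kappa,\beta-\eta}$, the first equality follows. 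To obtain the second equality I rewrite $(\tau^{w})^{\kappa+\eta-\beta}=\tau^{w(\kappa+\eta-\beta)}$ and use (\ref{rela_twist}) to replace $S_{\kappa}(\tau^{w})$ by $\tau^{w(\kappa)-\kappa}S_{\kappa}(\tau)$, which collapses the exponent to $\kappa+w(\eta)-w(\beta)$.

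For assertion (2), I repeat the argument of the second part of Proposition \ref{Prop_util}: by Assertion 10 of Theorem \ref{Th_Littel} (rephrased through the bijection $\Delta$ of (\ref{DefDelta})) the number of paths $b=b_{\mu}\otimes b_{1}\otimes\cdots\otimes b_{\ell}$ in $B(\pi_{\mu})\otimes B^{\otimes\ell}(\pi_{\kappa})$ starting at $\mu$, ending at $\lambda$ and remaining in ${\mathcal{C}}$ equals $f_{\lambda/\mu}^{\ell}$. Each such $b$ has ${\mathrm{wt}}(b_{1}\otimes\cdots\otimes b_{\ell})=\lambda-\mu$, hence common twisted probability
\[
p^{w,\otimes\ell}(b)=\frac{(\tau^{w})^{\ell\kappa-(\lambda-\mu)}}{S_{\kappa}(\tau^{w})^{\ell}}=\frac{\tau^{w(\ell\kappa+\mu-\lambda)}}{S_{\kappa}(\tau^{w})^{\ell}},
\]
giving the first equality. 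Applying (\ref{rela_twist}) $\ell$ times yields the second form exactly as above. The statement concerning the one-step transition $W_{\ell}^{w}\to W_{\ell+1}^{w}$ is obtained by specializing to $\ell=1$ and noting that, by Assertion 9 of Theorem \ref{Th_Littel}, the number of paths $\mu\ast\eta$ with $\eta\in B(\pi_{\kappa})$ contained in ${\mathcal{C}}$ and ending at $\lambda$ equals $m_{\mu,\kappa}^{\lambda}$.

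No serious obstacle is expected. The only delicate points are: (i) that we are applying (\ref{rela_twist}) to exponents that lie in $Q_{+}$ so the specialization is well-defined; this is automatic because $\mu+\ell\kappa-\lambda\in Q_{+}$ whenever $f_{\lambda/\mu}^{\ell}\neq 0$, by Remark~2 following Theorem \ref{Th_Littel}; and (ii) that the Weyl-group invariance of characters encoded in (\ref{rela_twist}) is exactly what makes the two presentations $\tau^{w(\cdots)}/S_{\kappa}(\tau^{w})^{\ell}$ and $\tau^{\cdots+w(\mu)-w(\lambda)}/S_{\kappa}(\tau)^{\ell}$ agree. Once these are in hand, the proof is essentially a transcription of the proof of Proposition \ref{Prop_util}.
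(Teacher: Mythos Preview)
Your proposal is correct and follows exactly the approach indicated in the paper, which does not spell out a separate proof but simply states that the result is obtained ``similarly to Proposition \ref{Prop_util} and using (\ref{rela_twist}).'' Your careful bookkeeping of the exponents and the use of (\ref{rela_twist}) to pass between the $\tau^{w}$ and $\tau$ forms is precisely what is intended.
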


\section{Law of the conditioned random path}

\subsection{The harmonic function $\psi$}

\label{Subsec_Harm}

By Assertion 2 of the previous proposition, we can write%

\begin{equation}
\psi_{\ell}^{w}(\mu)={\mathbb{P}}_{\mu}({\mathcal{W}}^{w}(t)\in{\mathcal{C}%
}\ {\text{ for any }}t\in\lbrack0,\ell])=\sum_{\lambda\in P_{+}}f_{\lambda
/\mu}^{\ell}\frac{\tau^{\ell\kappa+w(\mu)-w(\lambda)}}{S_{\kappa}(\tau)^{\ell
}} \label{psi(l,w)}%
\end{equation}
where $f_{\lambda/\mu}^{\ell}$ is the number of highest weight vertices in the
crystal
\begin{equation}
B(\mu)\otimes B(\kappa)^{\otimes\ell}\simeq{\displaystyle\bigoplus
\limits_{\lambda\in P_{+}}}B(\lambda)^{\oplus f_{\lambda/\mu}^{\ell}}.
\label{dec}%
\end{equation}
By interpreting (\ref{dec}) in terms of characters, we get
\begin{equation}
s_{\mu}\times s_{\kappa}^{\ell}=\sum_{\lambda\in P_{+}}f_{\lambda/\mu}^{\ell
}s_{\lambda}. \label{prod}%
\end{equation}
The Weyl character formula
\[
s_{\lambda}=\frac{\sum_{w\in{\mathsf{W}}}\varepsilon(w)e^{w(\lambda+\rho
)-\rho}}{\prod_{\alpha\in R_{+}}(1-e^{-\alpha})^{m_{\alpha}}}%
\]
yields
\[
\prod_{\alpha\in R_{+}}(1-e^{-\alpha})^{m_{\alpha}}s_{\mu}\times s_{\kappa
}^{\ell}=\sum_{\lambda\in P_{+}}f_{\lambda/\mu}^{\ell}\sum_{w\in{\mathsf{W}}%
}\varepsilon(w)e^{w(\lambda+\rho)-\rho}.
\]
In the previous formal series in ${\mathbb{C}}[[P]]$, all the monomials
$e^{w(\lambda+\rho)-\rho}$ with $w\in W$ and $\lambda\in P_{+}$ are distinct
(see \cite{KacB} Proposition 3.12). We thus also have
\begin{equation}
\prod_{\alpha\in R_{+}}(1-e^{-\alpha})^{m_{\alpha}}s_{\mu}\times s_{\kappa
}^{\ell}=\sum_{w\in{\mathsf{W}}}\varepsilon(w)\sum_{\lambda\in P_{+}%
}f_{\lambda/\mu}^{\ell}e^{w(\lambda+\rho)-\rho} \label{ThetaPsi0}%
\end{equation}
or equivalently%
\[
\prod_{\alpha\in R_{+}}(1-e^{-\alpha})^{m_{\alpha}}S_{\mu}\times S_{\kappa
}^{\ell}=\sum_{w\in{\mathsf{W}}}\varepsilon(w)\sum_{\lambda\in P_{+}%
}f_{\lambda/\mu}^{\ell}e^{w(\lambda+\rho)-\rho-\ell\kappa-\mu}.
\]
We now need the following lemma.

\begin{lemma}
For any $w\in{\mathsf{W}}$ and $\mu\in P_{+}$, set $\displaystyle \Pi_{\ell
}^{w}(\mu):=\sum_{\lambda\in P_{+}}f_{\lambda/\mu}^{\ell}\frac{\tau
^{\ell\kappa+\rho+\mu-w(\lambda+\rho)}}{S_{\kappa}(\tau)^{\ell}}.$

We then have $\displaystyle \lim_{\ell\rightarrow+\infty}\Pi_{\ell}^{w}%
(\mu)=0$ when $w\neq1$ and the series $\sum_{w\in{\mathsf{W}}}\varepsilon
(w)\Pi_{\ell}^{w}(\mu)$ converges uniformly in $\ell$.
\end{lemma}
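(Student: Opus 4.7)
The plan is to reduce $\Pi_{\ell}^{w}(\mu)$ to the already-studied probability $\psi_{\ell}^{w}(\mu)$ times an explicit constant depending on $w$, and then combine Proposition \ref{Prop_lim2} with Corollary \ref{Cor_Cvm}. First I would factor out the exponent: writing
\[
\ell\kappa+\rho+\mu-w(\lambda+\rho)=\bigl(\rho+\mu-w(\rho+\mu)\bigr)+\bigl(\ell\kappa+w(\mu)-w(\lambda)\bigr),
\]
and comparing with (\ref{psi(l,w)}), I obtain the key identity
\[
\Pi_{\ell}^{w}(\mu)=\tau^{\rho+\mu-w(\rho+\mu)}\,\psi_{\ell}^{w}(\mu).
\]
Since $\rho+\mu\in P_{+}$, Corollary \ref{Cor_Cvm} guarantees that $\rho+\mu-w(\rho+\mu)\in Q_{+}$, so the prefactor $\tau^{\rho+\mu-w(\rho+\mu)}$ is a well-defined positive real number.

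From this identity the first statement is immediate. For $w\neq1$, Assertion~2 of Proposition \ref{Prop_lim2} gives $\psi^{w}(\mu)=0$, while Assertion~1 gives $\psi_{\ell}^{w}(\mu)\to\psi^{w}(\mu)$ as $\ell\to+\infty$. Hence
\[
\lim_{\ell\to+\infty}\Pi_{\ell}^{w}(\mu)=\tau^{\rho+\mu-w(\rho+\mu)}\cdot0=0.
\]

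For the uniform convergence in $\ell$ of $\sum_{w\in{\mathsf{W}}}\varepsilon(w)\Pi_{\ell}^{w}(\mu)$, I would apply the Weierstrass M-test. Since $\psi_{\ell}^{w}(\mu)$ is a probability it satisfies $0\leq\psi_{\ell}^{w}(\mu)\leq1$ for every $\ell$, whence
\[
\bigl|\varepsilon(w)\Pi_{\ell}^{w}(\mu)\bigr|\leq\tau^{\rho+\mu-w(\rho+\mu)}.
\]
The right-hand side is independent of $\ell$, and by Corollary \ref{Cor_Cvm} applied to the dominant weight $\mu+\rho\in P_{+}$ (using $\tau\in{\mathcal{T}}$), the series $\sum_{w\in{\mathsf{W}}}\tau^{\rho+\mu-w(\rho+\mu)}$ is finite. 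The M-test then gives normal, hence uniform, convergence in $\ell$.

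The only mildly delicate step is the algebraic rewriting that produces the identity $\Pi_{\ell}^{w}(\mu)=\tau^{\rho+\mu-w(\rho+\mu)}\psi_{\ell}^{w}(\mu)$; once it is in hand everything else is a direct appeal to previously established results (Proposition \ref{Prop_lim2} for pointwise convergence, and Corollary \ref{Cor_Cvm} together with the trivial bound $\psi_{\ell}^{w}(\mu)\leq1$ for the summability needed by the M-test). I expect no genuine obstacle beyond correctly matching the exponents of $\tau$ in the two expressions.
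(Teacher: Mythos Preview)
Your proof is correct and follows essentially the same approach as the paper: both factor $\Pi_{\ell}^{w}(\mu)=\tau^{\rho+\mu-w(\rho+\mu)}\psi_{\ell}^{w}(\mu)$ via (\ref{psi(l,w)}), then invoke Proposition~\ref{Prop_lim2} for the limit and combine the bound $0\leq\psi_{\ell}^{w}(\mu)\leq1$ with Corollary~\ref{Cor_Cvm} for the uniform (Weierstrass) convergence. One cosmetic remark: Corollary~\ref{Cor_Cvm} should be applied with input $\mu\in P_{+}$ (its conclusion already involves $\mu+\rho$), not with input $\mu+\rho$.
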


\begin{proof}
Using (\ref{psi(l,w)}), one gets
\begin{equation}
\Pi_{\ell}^{w}(\mu)=\tau^{\rho-w(\rho)+\mu-w(\mu)}\sum_{\lambda\in P_{+}%
}f_{\lambda/\mu}^{\ell}\frac{\tau^{\ell\kappa+w(\mu)-w(\lambda)}}{S_{\kappa
}(\tau)^{\ell}}=\tau^{\rho-w(\rho)+\mu-w(\mu)}\psi_{\ell}^{w}(\mu).
\label{ThetaPsi}%
\end{equation}

Fix $w\neq1$. Since $\tau^{\rho-w(\rho)+\mu-w(\mu)}$ does not depend on $\ell$
and $\lim_{\ell\rightarrow+\infty}\psi_{\ell}^{w}(\mu)=0$ by Proposition
\ref{Prop_lim2}, we derive $\lim_{\ell\rightarrow+\infty}\Pi_{\ell}^{w}%
(\mu)=0$ as desired.

Now, we have obviously $0\leq\psi_{\ell}^{w}(\mu)\leq1$ and the series
$\sum_{w\in{\mathsf{W}}}\tau^{\rho-w(\rho)+\mu-w(\mu)}$ converges by Corollary
\ref{Cor_Cvm}.\ The uniform convergence in $\ell$ of the series $\sum
_{w\in{\mathsf{W}}}\varepsilon(w)\Pi_{\ell}^{w}(\mu)$ thus follows from the
inequality $\left\vert \varepsilon(w)\Pi_{\ell}^{w}(\mu)\right\vert \leq
\tau^{\rho-w(\rho)+\mu-w(\mu)}$, which is a direct consequence of
(\ref{ThetaPsi}).
\end{proof}

\bigskip

We can now set $\tau_{i}=e^{-\alpha_{i}}$ in (\ref{ThetaPsi0}) and get
\begin{equation}
\prod_{\alpha\in R_{+}}(1-\tau^{\alpha})^{m_{\alpha}}S_{\mu}(\tau)=\sum
_{w\in{\mathsf{W}}}\varepsilon(w)\sum_{\lambda\in P_{+}}f_{\lambda/\mu}^{\ell
}\frac{\tau^{\ell\kappa+\rho+\mu-w(\lambda+\rho)}}{S_{\kappa}(\tau)^{\ell}%
}{\text{.}} \label{identity}%
\end{equation}
Consequently, we have
\[
\prod_{\alpha\in R_{+}}(1-\tau^{\alpha})^{m_{\alpha}}S_{\mu}(\tau)=\sum
_{w\in{\mathsf{W}}}\varepsilon(w)\Pi_{\ell}^{w}(\mu)=\Pi_{\ell}^{1}(\mu
)+\sum_{w\neq1}\varepsilon(w)\Pi_{\ell}^{w}(\mu)
\]
with $\Pi_{\ell}^{1}(\mu)=\psi_{\ell}(\mu),$ by (\ref{ThetaPsi}). Letting
$\ell\rightarrow+\infty$, the previous lemma finally gives
\[
\psi(\mu)=\prod_{\alpha\in R_{+}}(1-\tau^{\alpha})^{m_{\alpha}}S_{\mu}(\tau).
\]

We have established the following theorem, which is the analogue in our
context of Corollary 7.4.3 in \cite{LLP}:

\begin{theorem}
\label{Th_PSi}For any $\mu\in P_{+}$, we have
\[
\psi(\mu)={\mathbb{P}}_{\mu}({\mathcal{W}}(t)\in{\mathcal{C}}{\text{ for any
}}t\geq0)=\prod_{\alpha\in R_{+}}(1-\tau^{\alpha})^{m_{\alpha}}S_{\mu}%
(\tau)>0
\]
In particular, the harmonic function $\psi$ is positive and does not depend on
the dominant weight $\kappa$ considered.
\end{theorem}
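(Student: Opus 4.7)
The plan is to pass to the limit $\ell \to +\infty$ in identity (\ref{identity}) that has just been derived. That identity, obtained by combining the character decomposition $s_\mu \cdot s_\kappa^\ell = \sum_{\lambda \in P_+} f_{\lambda/\mu}^\ell s_\lambda$ with the Weyl-Kac character formula and then specializing $e^{-\alpha_i} = \tau_i$, reads
\[
\prod_{\alpha \in R_+}(1-\tau^\alpha)^{m_\alpha} S_\mu(\tau) \;=\; \sum_{w \in \mathsf{W}} \varepsilon(w) \Pi_\ell^w(\mu),
\]
and its left-hand side does not depend on $\ell$.

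The first step is to recognize the $w = 1$ summand: since $\tau^{\rho - w(\rho) + \mu - w(\mu)} = 1$ when $w$ is the identity, formula (\ref{ThetaPsi}) gives $\Pi_\ell^1(\mu) = \psi_\ell(\mu)$, so by Proposition \ref{Prop_lim2} one has $\lim_\ell \Pi_\ell^1(\mu) = \psi(\mu)$. For $w \neq 1$, the preceding lemma gives $\lim_\ell \Pi_\ell^w(\mu) = 0$, the key input being that the twisted random walk $W^w$ has drift $w^{-1}(m(1)) \notin \mathring{\mathcal{C}}$, so $\psi_\ell^w(\mu) \to 0$ by the strong law of large numbers.

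The main obstacle is to justify swapping the limit with the (generally infinite) sum over $\mathsf{W}$. This is precisely the uniform-in-$\ell$ convergence established in the preceding lemma, which in turn reduces via the bound $|\varepsilon(w) \Pi_\ell^w(\mu)| \leq \tau^{\rho - w(\rho) + \mu - w(\mu)}$ to the convergence of $\sum_{w \in \mathsf{W}} \tau^{\mu + \rho - w(\mu + \rho)}$ proved in Corollary \ref{Cor_Cvm}. Granting this, passing to the limit leaves only the $w = 1$ term on the right-hand side, giving $\psi(\mu) = \prod_{\alpha \in R_+}(1-\tau^\alpha)^{m_\alpha} S_\mu(\tau)$.

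The last step is positivity and $\kappa$-independence. One has $S_\mu(\tau) > 0$ by Proposition \ref{Prop_Conv}, each factor $1 - \tau^\alpha$ is strictly positive since $\tau \in \mathcal{T}$ forces $\tau^\alpha < 1$ for every $\alpha \in R_+$, and convergence of the infinite product $\prod_{\alpha \in R_+}(1-\tau^\alpha)^{m_\alpha}$ to a positive value follows from the convergence of $S_\infty^*(\bar\tau)$ proved in Proposition \ref{Prop_Conv} (take logarithms and compare). The $\kappa$-independence is manifest on the right-hand side, which involves only $\mu$, $\tau$, and the root data of $\mathfrak{g}$.
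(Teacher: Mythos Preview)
Your proof is correct and follows essentially the same approach as the paper: specialize the character identity to obtain (\ref{identity}), identify $\Pi_\ell^1(\mu)=\psi_\ell(\mu)$, use the preceding lemma to kill the $w\neq 1$ terms and to justify exchanging the limit with the sum over $\mathsf{W}$, and conclude. Your final paragraph on positivity and $\kappa$-independence actually supplies more detail than the paper, which simply asserts the strict inequality; your reduction of the positivity of the infinite product to the finiteness of $S_\infty^*(\bar\tau)$ is a clean way to fill that in.
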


\begin{corollary}
The law of the random walk $W$ conditioned by the event
\[
E:=({\mathcal{W}}(t)\in{\mathcal{C}}\ {\text{\textbf{for any }}}t\geq0)
\]
is the same as the law of the Markov chain $H$ defined as the generalized
Pitman transform of $W$ (see Theorem \ref{Th_LawH}). In particular, this law
only depends on $\kappa$ and not on the choice of the path $\pi_{\kappa}$ such
that $\operatorname{Im}\pi_{\kappa}\subset{\mathcal{C}}$.
\end{corollary}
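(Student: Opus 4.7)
The plan is a short Doob $h$-transform computation. Having identified the harmonic function $\psi$ in Theorem \ref{Th_PSi}, the conditional law of $W$ on the event $E$ is dictated by Proposition \ref{Prop_Q}, and it remains to match the resulting transition matrix with the one of $H$ given in Theorem \ref{Th_LawH}.

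First I would verify that the hypotheses of Proposition \ref{Prop_Q} apply to $\mathcal{W}$: the discrete restriction $(W_\ell)_{\ell \geq 0}$ takes values in $M = P$ by construction, the translation invariance (\ref{invariancetranslation}) follows from the i.i.d.\ construction of Section \ref{Subsec-RPW}, and $\mathbb{P}(E) > 0$ by Theorem \ref{Th_PSi}. Under $\mathbb{Q}(\cdot) = \mathbb{P}(\cdot \mid E)$ the sequence $(W_\ell)_{\ell \geq 0}$ is therefore a Markov chain on $P_+$ with transitions $\Pi^E(\mu, \lambda)\,\psi(\lambda)/\psi(\mu)$. Assertion 2 of Proposition \ref{Prop_util} gives
\[
\Pi^E(\mu, \lambda) = m^\lambda_{\mu, \kappa}\, \frac{\tau^{\kappa+\mu-\lambda}}{S_\kappa(\tau)},
\]
and the explicit formula $\psi(\mu) = \prod_{\alpha \in R_+}(1-\tau^\alpha)^{m_\alpha}\, S_\mu(\tau)$ of Theorem \ref{Th_PSi} causes the Weyl--Kac denominator to cancel in the ratio $\psi(\lambda)/\psi(\mu) = S_\lambda(\tau)/S_\mu(\tau)$. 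Multiplying the two factors produces
\[
\mathbb{Q}(W_{\ell+1} = \lambda \mid W_\ell = \mu) = \frac{S_\lambda(\tau)}{S_\kappa(\tau)\, S_\mu(\tau)}\, \tau^{\kappa+\mu-\lambda}\, m^\lambda_{\mu, \kappa},
\]
which is exactly the transition matrix $\Pi$ of Theorem \ref{Th_LawH}.

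To promote this equality of transition matrices to equality of laws, I would compare the initial distributions: since $V(0) \otimes V(\kappa) = V(\kappa)$ forces $m^\lambda_{0, \kappa} = \delta_{\lambda, \kappa}$ (and $S_0(\tau) = 1$), the formula above specialized at $\mu = 0$ yields $\mathbb{Q}(W_1 = \kappa) = 1$, matching the deterministic identity $H_1 = \mathfrak{P}(X_1)(1) = \pi_\kappa(1) = \kappa$ (the only highest-weight element of $B(\pi_\kappa)$ is $\pi_\kappa$ itself). The independence of the conditioned law on the choice of $\pi_\kappa$ is then immediate from the final expression for $\Pi$, whose every ingredient ($\tau$, $\kappa$, $S_\mu(\tau)$, $S_\lambda(\tau)$, $S_\kappa(\tau)$ and $m^\lambda_{\mu,\kappa}$) is a crystal isomorphism invariant by assertion 3 of Theorem \ref{Th_Littel}. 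I expect no real obstacle here: all the substantial work has been carried out in the proof of Theorem \ref{Th_PSi}, and the corollary amounts to reading off the Doob transform and observing the cancellation of the infinite product $\prod_{\alpha \in R_+}(1-\tau^\alpha)^{m_\alpha}$.
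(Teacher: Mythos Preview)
Your proposal is correct and follows the same approach as the paper: identify the conditioned law via Proposition~\ref{Prop_Q} as the Doob $\psi$-transform of $\Pi^{E}$, then use Theorem~\ref{Th_PSi} and Proposition~\ref{Prop_util} to see that this transform coincides with the transition matrix (\ref{MatrixH}) of $H$. You are in fact more thorough than the paper, which omits the check of initial distributions and the explicit remark on independence from $\pi_{\kappa}$.
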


\begin{proof}
Let $\Pi$ be the transition matrix of $W$ and $\Pi^{E}$ its restriction to the
event $E$.\ We have seen in \S \ \ref{subsec-Markov} that the transition
matrix of $W$ conditioned by $E$ is the $h$-transform of $\Pi^{E}$ by the
harmonic function
\[
h_{E}(\mu):={\mathbb{P}}_{\mu}({\mathcal{W}}(t)\in{\mathcal{C}}{\text{ for any
}}t\geq0).
\]
By the previous theorem, we have $h_{E}=\psi$. It also easily follows from
Theorem \ref{Th_LawH} that the transition matrix of $H$ is the $\psi
$-transform of $\Pi^{E}$. Therefore both $H$ and the conditioning of $W$ by
$E$ have the same law.
\end{proof}

\subsection{Random walks defined from non irreducible representations}

For simplicity we restrict ourselves in this paragraph to the case where
${\mathfrak{g}}$ is a (finite-dimensional) Lie algebra with (invertible)
Cartan matrix $A$. In particular, $m_{\alpha}=1$ for any $\alpha\in R_{+}$.
Consider $\tau=(\tau_{1},\ldots,\tau_{n})\in{\mathcal{T}}$. Then both root and
weight lattices have the same rank $n$.\ Moreover, the Cartan matrix $A$ is
the transition matrix between the weight and root lattices. In particular,
each weight $\beta\in P$ decomposes on the basis of simple roots as
$\beta=\beta_{1}^{\prime}\alpha_{1}+\cdots\beta_{n}^{\prime}\alpha_{n}$ where
$(\beta_{1}^{\prime},\ldots,\beta_{n}^{\prime})\in\frac{1}{\det A}{\mathbb{Z}%
}^{n}$ and we can set $\tau^{\beta}=\tau_{1}^{\beta_{1}^{\prime}}\cdots
\tau_{n}^{\beta_{n}^{\prime}}$.

Let $M$ be a finite dimensional ${\mathfrak{g}}$-module with decomposition in
irreducible components
\[
M\simeq{\displaystyle \bigoplus\limits_{\kappa\in\varkappa}}V(\kappa)^{\oplus
a_{\kappa}}
\]
where $\varkappa$ is a finite subset of$\ P_{+}$ and $a_{\kappa}>0$ for any
$\kappa\in\varkappa$. For each $\kappa\in\varkappa$ choose a path
$\eta_{\kappa}$ in $P$ from $0$ to $\kappa$ contained in ${\mathcal{C}}$.\ Let
$B(\varkappa)$ be the set of paths obtained by applying the operators
$\tilde{e}_{i}$,$\tilde{f}_{i},i=1,\ldots,n$ to the paths $\eta_{\kappa
},\kappa\in\varkappa$. This set is a realization of the crystal of the
${\mathfrak{g}}$-module $\oplus_{\kappa\in\varkappa}V(\kappa)$ (without
multiplicities) and we have
\[
B(\varkappa)=\bigsqcup\limits_{\kappa\in\varkappa}B(\eta_{\kappa}).
\]
Given $\pi=\pi_{1}\otimes\cdots\otimes\pi_{\ell}$ in $B^{\otimes\ell
}(\varkappa)$ such that $\pi_{a}\in B(\kappa_{a})$ for any $a=1,\ldots,\ell$,
we set $a_{\pi}=a_{\kappa_{1}}\cdots a_{\kappa_{\ell}}$. By formulas
(\ref{Ten_Prod}), the function $a$ is constant on the connected components of
$B^{\otimes\ell}(\varkappa)$.

We are going to define a probability distribution on $B(\varkappa)$ compatible
with its weight graduation and taking into account the multiplicities
$a_{\kappa}$.\ We cannot proceed as in (\ref{potimesell}) by working only with
the root lattice of ${\mathfrak{g}}$ since $B(\varkappa)$ contains fewer
highest weight paths.\ So the underlying lattice to consider is the weight
lattice. We first set
\[
\Sigma_{M}(\tau)=\sum_{\kappa\in\varkappa}\sum_{\pi\in B(\eta_{\kappa}%
)}a_{\kappa}\tau^{-{\mathrm{wt}}(\pi)}=\sum_{\pi\in B(\varkappa)}a_{\pi}%
\tau^{-{\mathrm{wt}}(\pi)}=\sum_{\kappa\in\varkappa}a_{\kappa}s_{\kappa}%
(\tau)=\sum_{\kappa\in\varkappa}a_{\kappa}\tau^{-\kappa}S_{\kappa}(\tau).
\]
We define the probability distribution $p$ on $B(\varkappa)$ by setting
$p_{\pi}=a_{\kappa}\frac{\tau^{-{\mathrm{wt}}(\pi)}}{\Sigma_{M}(\tau)}$ for
any $\pi\in B(\eta_{\kappa})$. When ${\mathrm{card}}(\varkappa)=1$, we recover
the probability distribution of
\S \ \ref{Probability distribution on elementary paths}.\ Observe that we
have
\[
\Sigma_{M}(\tau)^{\ell}=\sum_{\pi\in B^{\otimes\ell}(\varkappa)}a_{\pi}%
\tau^{-{\mathrm{wt}}(\pi)}{\text{ for any }}\ell\geq0{\text{.}}
\]
So we can define a probability distribution $p^{\otimes\ell}$ on
$B^{\otimes\ell}(\varkappa)$ such that
\[
p_{\pi}=a_{\pi}\frac{\tau^{-{\mathrm{wt}}(\pi)}}{\Sigma_{M}(\tau)^{\ell}%
}{\text{ for any }}\pi=\pi_{1}\otimes\cdots\otimes\pi_{\ell}\in B^{\otimes
\ell}(\varkappa).
\]
Let $X=(X_{\ell})_{\ell\geq1}$ be a sequence of i.i.d. random variables
defined on $B(\varkappa)$ with probability distribution $p$.\ The random
process ${\mathcal{W}}$ and the random walk $W$ are then defined from $X$ and
$p^{\otimes{\mathbb{N}}}$ as in \S \ \ref{subsec_RandPath}.

It is then possible to extend our results to the random path ${\mathcal{W}}$
and its corresponding random walk $W$ obtained from the set of elementary
paths $B(\varkappa)$. We have then%

\[
{\mathbb{P}}(W_{\ell+1}=\beta\mid W_{\ell}=\gamma)=\frac{K_{M,\beta-\gamma}%
}{\Sigma_{M}(\tau)}\tau^{\gamma-\beta}
\]
for any weights $\beta$ and $\gamma$ where $K_{M,\beta-\gamma}$ is the
dimension of the space of weight $\beta-\gamma$ in $M$. We indeed have
$K_{M,\beta-\gamma}=\sum_{\kappa\in\varkappa}a_{\kappa}K_{\kappa,\beta-\gamma
}$where $K_{\kappa,\beta-\gamma}$ is the number of paths $\eta\in B(\kappa)$
such that $\eta(1)=\beta-\gamma$.\ Given $\lambda$ and $\mu$ two dominant
weights, we also get
\begin{equation}
{\mathbb{P}}(W_{\ell+1}=\lambda\mid W_{\ell}=\mu,{\mathcal{W}}(t)\in
{\mathcal{C}}\ {\text{ for any }}t\in\lbrack\ell,\ell+1])=\frac{m_{M,\mu
}^{\lambda}}{\Sigma_{M}(\tau)}\tau^{\mu-\lambda} \label{SubstochM}%
\end{equation}
where $m_{M,\mu}^{\lambda}$ is the multiplicity of $V(\lambda)$ in $M\otimes
V(\mu)$. We indeed have $m_{M,\mu}^{\lambda}=\sum_{\kappa\in\varkappa
}a_{\kappa}m_{\kappa,\mu}^{\lambda}$where $m_{\kappa,\mu}^{\lambda}$ is the
number of paths $\eta\in B(\kappa)$ such that $\eta(1)=\lambda-\mu$ which
remains in ${\mathcal{C}}$.\ 

We define the generalized Pitman transform ${\mathfrak{P}}$ and the Markov
chain $H$ as in \S \ \ref{subsec_Pitman}. For any $\ell\geq1$, we yet write
$\psi_{\ell}(\mu)={\mathbb{P}}_{\mu}({\mathcal{W}}(t)\in{\mathcal{C}}$ for any
$t\in\lbrack1,\ell])$. We then have
\begin{multline*}
\psi_{\ell}(\mu)=\sum_{\pi\in B^{\otimes\ell}(\varkappa),\mu+\pi
(t)\in{\mathcal{C}}{\text{ for }}t\in\lbrack0,\ell]}p_{\pi}=\sum_{\lambda\in
P_{+}}\ \sum_{\pi\in B^{\otimes\ell}(\varkappa),{\text{ }}\mu+\pi
(t)\in{\mathcal{C}}{\text{ for }}t\in\lbrack0,\ell],\pi(\ell)=\lambda}a_{\pi
}\frac{\tau^{\mu-\lambda}}{\Sigma_{M}(\tau)^{\ell}}=\\
\sum_{\lambda\in P_{+}}f_{\lambda/\mu}^{\ell}\frac{\tau^{\mu-\lambda}}%
{\Sigma_{M}(\tau)^{\ell}}%
\end{multline*}
where $f_{\lambda/\mu}^{\ell}$ is the multiplicity of $V(\lambda)$ in
$V(\mu)\otimes M^{\otimes\ell}$. We indeed have the equality $f_{\lambda/\mu
}^{\ell}=\sum_{\pi\in B^{\otimes\ell}(\varkappa),{\text{ }}\mu+\pi
(t)\in{\mathcal{C}}{\text{ for }}t\in\lbrack0,\ell],\pi(\ell)=\lambda}a_{\pi}$
by an easy extension of Assertion 10 in Theorem \ref{Th_Littel}. We can now
establish the following theorem.

\begin{theorem}
The law of the random walk $W$ conditioned by the event
\[
E:=({\mathcal{W}}(t)\in{\mathcal{C}}\ {\text{ for any }}t\geq0)
\]
is the same as the law of the Markov chain $H$ defined as the generalized
Pitman transform of $W$ (see Theorem \ref{Th_LawH}). The associated transition
matrix $\Pi^{E}$ verifies
\begin{equation}
\Pi^{E}(\mu,\lambda)=\frac{S_{\lambda}(\tau)}{S_{\mu}(\tau)\Sigma_{M}(\tau
)}m_{M,\mu}^{\lambda}\tau^{\mu-\lambda} \label{PitDEc}%
\end{equation}
and we have yet
\[
{\mathbb{P}}_{\mu}({\mathcal{W}}(t)\in{\mathcal{C}}\ {\text{ for any }}%
t\geq0)=\prod_{\alpha\in R_{+}}(1-\tau^{\alpha})S_{\mu}(\tau).
\]

\end{theorem}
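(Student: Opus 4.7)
The strategy is to transpose the argument of Theorem \ref{Th_PSi} and its corollary to the reducible module $M=\bigoplus_{\kappa\in\varkappa}V(\kappa)^{\oplus a_{\kappa}}$, replacing $S_{\kappa}(\tau)$ by $\Sigma_{M}(\tau)$ throughout and keeping track of the multiplicities $a_{\kappa}$. I would proceed in three stages: symmetrize the distribution to build twisted analogues $\psi_{\ell}^{w}$; run the Weyl character argument of \S \ref{Subsec_Harm} to extract $\psi$; and finally mimic the proof of Theorem \ref{Th_LawH} to get the transition formula.

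First I would reproduce the machinery of Section 5 in this context. For $w\in{\mathsf{W}}$, define a twisted distribution $p^{w}$ on $B(\varkappa)^{\otimes\ell}$ by $p^{w}_{b}:=p_{w(b)}$, with $w$ acting diagonally via Assertion 6 of Theorem \ref{Th_Littel}. The multiplicity function $a_{\cdot}$ is ${\mathsf{W}}$-invariant because $w$ preserves connected components, and each $s_{\kappa}$ is already ${\mathsf{W}}$-symmetric, so (\ref{rela_twist}) applied termwise gives $\Sigma_{M}(\tau^{w})=\Sigma_{M}(\tau)$. Writing ${\mathcal{W}}^{w}$ for the corresponding random path, the computation of Proposition 5.3 produces drift $m^{w}(1)=w^{-1}(m(1))$. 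Once one verifies $m(1)\in\mathring{{\mathcal{C}}}$ by the $i$-chain decomposition argument used in the irreducible case, Proposition \ref{Prop_lim} yields $\lim_{\ell\to\infty}\psi_{\ell}^{w}(\mu)=0$ for every $w\neq 1$.

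Next I would run the character argument of \S \ref{Subsec_Harm}. Starting from $s_{\mu}\cdot(\mathrm{char}\,M)^{\ell}=\sum_{\lambda}f_{\lambda/\mu}^{\ell}s_{\lambda}$, I multiply both sides by $\prod_{\alpha\in R_{+}}(1-e^{-\alpha})$, expand each $s_{\lambda}$ via the Weyl character formula, specialize $e^{-\alpha_{i}}=\tau_{i}$, and divide by $\tau^{\mu}\Sigma_{M}(\tau)^{\ell}$. After the same rearrangement that produces (\ref{ThetaPsi}), this gives, for every $\ell\geq 1$,
\[
\prod_{\alpha\in R_{+}}(1-\tau^{\alpha})\,S_{\mu}(\tau)\;=\;\sum_{w\in{\mathsf{W}}}\varepsilon(w)\,\tau^{\rho+\mu-w(\rho+\mu)}\,\psi_{\ell}^{w}(\mu).
\]
Each summand is dominated in modulus by $\tau^{\rho+\mu-w(\rho+\mu)}$ since $0\le\psi_{\ell}^{w}\le 1$, and Corollary \ref{Cor_Cvm} provides the summable majorant required for dominated convergence. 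Letting $\ell\to\infty$ kills every $w\neq 1$ term and leaves the $w=1$ contribution $\psi(\mu)$, giving $\psi(\mu)=\prod_{\alpha\in R_{+}}(1-\tau^{\alpha})S_{\mu}(\tau)$.

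The transition formula (\ref{PitDEc}) is then obtained by repeating the computation of Theorem \ref{Th_LawH}: the count $|{\mathcal{S}}(\mu^{(1)},\ldots,\mu^{(\ell)},\lambda)|=\prod_{k=1}^{\ell-1}m_{M,\mu^{(k)}}^{\mu^{(k+1)}}\cdot m_{M,\mu}^{\lambda}$ uses the extension of Assertion 10 of Theorem \ref{Th_Littel} already invoked in the excerpt together with $m_{M,\mu}^{\lambda}=\sum_{\kappa}a_{\kappa}m_{\kappa,\mu}^{\lambda}$, while each highest weight path $b^{h}\in{\mathcal{S}}$ has connected component isomorphic to $B(\lambda)$ and contributes $\sum_{b\in B(b^{h})}p_{b}=\tau^{\mu-\lambda}S_{\lambda}(\tau)/\Sigma_{M}(\tau)^{\ell+1}$ by (\ref{slambda})--(\ref{defS}). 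Taking the appropriate ratio yields (\ref{PitDEc}); comparing with the $\psi$-transform of $\Pi^{E}$ using the formula for $\psi$ proven above and invoking Proposition \ref{Prop_Q} identifies the law of $W$ conditioned on $E$ with that of $H$. The principal obstacle is the multi-component bookkeeping in this last step: one must check that summation of $p_{b}$ over a connected component $B(b^{h})\simeq B(\lambda)$, whose paths traverse several factors $B(\eta_{\kappa})$ carrying distinct multiplicities $a_{\kappa}$, still collapses to the clean factor $S_{\lambda}(\tau)/\Sigma_{M}(\tau)^{\ell+1}$. Verifying $m(1)\in\mathring{{\mathcal{C}}}$ is also slightly more delicate than in the irreducible case since individual $\kappa\in\varkappa$ may lie on walls of ${\mathcal{C}}$, but this is a standing hypothesis for the statement to be non-vacuous.
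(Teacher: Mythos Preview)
Your approach is essentially the paper's own: adapt \S \ref{Subsec_Harm} to $\Sigma_{M}$ to compute $\psi$, then rerun the proof of Theorem \ref{Th_LawH} for $H$, and finally invoke Proposition \ref{Prop_Q}. The one place where your write-up goes astray is precisely the ``multi-component bookkeeping'' you flag at the end. Your formulas in the third paragraph are not quite right: the sum over a connected component is
\[
\sum_{b\in B(b^{h})}p_{b}=\sum_{b\in B(b^{h})}a_{b}\frac{\tau^{-{\mathrm{wt}}(b)}}{\Sigma_{M}(\tau)^{\ell+1}}=a_{b^{h}}\,\frac{\tau^{-\lambda}S_{\lambda}(\tau)}{\Sigma_{M}(\tau)^{\ell+1}},
\]
with the factor $a_{b^{h}}$ surviving (and no $\tau^{\mu}$). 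This uses exactly the observation you made in your first paragraph, that $a_{\cdot}$ is constant on connected components, so $a_{b}=a_{b^{h}}$ for every $b\in B(b^{h})$. Correspondingly, the multiplicities $m_{M,\cdot}^{\cdot}$ do \emph{not} arise as the bare cardinality $|{\mathcal{S}}(\mu^{(1)},\ldots,\mu^{(\ell)},\lambda)|$ but as the weighted count
\[
\sum_{b^{h}\in{\mathcal{S}}(\mu^{(1)},\ldots,\mu^{(\ell)},\lambda)}a_{b^{h}}=\prod_{k=1}^{\ell-1}m_{M,\mu^{(k)}}^{\mu^{(k+1)}}\times m_{M,\mu}^{\lambda},
\]
which is what the extension of Assertion 10 actually gives (recall $m_{M,\mu}^{\lambda}=\sum_{\kappa}a_{\kappa}m_{\kappa,\mu}^{\lambda}$). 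With these two corrections the ratio collapses to (\ref{PitDEc}) exactly as in the irreducible case, and the rest of your argument goes through unchanged.
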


\begin{proof}
The computation of the harmonic function $\psi(\mu)={\mathbb{P}}_{\mu
}({\mathcal{W}}(t)\in{\mathcal{C}}$ for any $t\geq0)$ is similar to
\S \ \ref{Subsec_Harm}.\ We have from the Weyl character formula
\[
\prod_{\alpha\in R_{+}}(1-e^{-\alpha})e^{-\mu}s_{\mu}=\sum_{\lambda\in P_{+}%
}f_{\lambda/\mu}^{\ell}\sum_{w\in{\mathsf{W}}}\varepsilon(w)\frac
{e^{w(\lambda+\rho)-\rho-\mu}}{s_{M}^{\ell}}
\]
where $s_{M}={\mathrm{char}}(M)$. When we specialize $\tau_{i}=e^{-\alpha_{i}%
}$ in $s_{M}$, we obtain $\Sigma_{M}(\tau)$.\ Hence
\[
\prod_{\alpha\in R_{+}}(1-\tau^{\alpha})S_{\mu}(\tau)=\sum_{\lambda\in P_{+}%
}f_{\lambda/\mu}^{\ell}\sum_{w\in{\mathsf{W}}}\varepsilon(w)\frac{\tau
^{\mu+\rho-w(\lambda+\rho)}}{\Sigma_{M}(\tau)^{\ell}}.
\]
If we set $\displaystyle \Pi_{\ell}^{w}(\mu):=\sum_{\lambda\in P_{+}%
}f_{\lambda/\mu}^{\ell}\frac{\tau^{\rho+\mu-w(\lambda+\rho)}}{\Sigma_{M}%
(\tau)^{\ell}}$, we yet obtain $\lim_{\ell\rightarrow+\infty}\Pi_{\ell}%
^{w}(\mu)=0$ when $w\neq1$ and $\Pi_{\ell}^{1}(\mu)=\psi_{\ell}(\mu
)$.\ Moreover
\[
\prod_{\alpha\in R_{+}}(1-\tau^{\alpha})S_{\mu}(\tau)=\sum_{w\in{\mathsf{W}}%
}\varepsilon(w)\Pi_{\ell}^{w}(\mu)=\Pi_{\ell}^{1}(\mu)+\sum_{w\neq
1}\varepsilon(w)\Pi_{\ell}^{w}(\mu)
\]
so the harmonic function $\psi=\lim_{\ell\rightarrow+\infty}\psi_{\ell}$ is
also given by $\psi(\mu)=\prod_{\alpha\in R_{+}}(1-\tau^{\alpha})S_{\mu}%
(\tau)$. Since $\Pi^{E}$ is the Doob $\psi$-transform of the the restriction
(\ref{SubstochM}) of $W$ to ${\mathcal{C}}$, we obtain the desired expression
(\ref{PitDEc}) for $\Pi^{E}(\mu,\lambda)$.

To see that $\Pi^{E}$ coincides with the law of the image of $W$ under the
generalized Pitman transform, we proceed as in Proof of Theorem \ref{Th_LawH}.
Consider $\mu=\mu^{(\ell)},\mu^{(\ell-1)},\ldots,\mu^{(1)}$ a sequence of
elements in $P_{+}$.\ Let ${\mathcal{S}}(\mu^{(1)},\ldots\mu^{(\ell)}%
,\lambda)$ be the set of paths $b^{h}\in B^{\otimes\ell}(\varkappa)$ remaining
in ${\mathcal{C}}$ and such that $b^{h}(k)=\mu^{(k)},k=1,\ldots,\ell$ and
$b^{(\ell+1)}=\lambda$. Consider $b=b_{1}\otimes\cdots\otimes b_{\ell}\otimes
b_{\ell+1}\in B^{\otimes\ell+1}(\varkappa)$. We have ${\mathfrak{P}}%
(b_{1}\otimes\cdots\otimes b_{k})(k)=\mu^{(k)}$ for any $k=1,\ldots,\ell$ and
${\mathfrak{P}}(b)(\ell+1)=\lambda$ if and only if ${\mathfrak{P}}%
(b)\in{\mathcal{S}}(\mu^{(1)},\ldots\mu^{(\ell)},\lambda)$. Moreover, by
(\ref{potimesell}), for any $b^{h}\in S(\mu^{(1)},\ldots\mu^{(\ell)},\lambda
)$, we have
\[
{\mathbb{P}}(b\in B(b^{h}))=\sum_{b\in B(b^{h})}p_{b}=\sum_{b\in B(b^{h}%
)}a_{b}\frac{\tau^{-{\mathrm{wt}}(b)}}{\Sigma_{M}(\tau)^{\ell+1}}=a_{b^{h}%
}\frac{\tau^{-\lambda}S_{\lambda}(\tau)}{\Sigma_{M}(\tau)^{\ell+1}}
\]
since $a_{b}=a_{b_{h}}$ for any $b\in B(b^{h}).$ This gives\
\begin{multline*}
{\mathbb{P}}(H_{\ell+1}=\lambda,H_{k}=\mu^{(k)},\forall k=1,\ldots,\ell
)=\frac{\tau^{-\lambda}S_{\lambda}(\tau)}{\Sigma_{M}(\tau)^{\ell+1}}%
\sum_{b^{h}\in{\mathcal{S}}(\mu^{(1)},\ldots\mu^{(\ell)},\lambda)}a_{b^{h}}\\
=\frac{\tau^{-\lambda}S_{\lambda}(\tau)}{\Sigma_{M}(\tau)^{\ell+1}}\prod
_{k=1}^{\ell-1}m_{\mu^{(k)},M}^{\mu^{(k+1)}}\times m_{\mu,M}^{\lambda}%
\end{multline*}
also using extension of Assertion 10 in Theorem \ref{Th_Littel}.\ Similarly
\[
{\mathbb{P}}(H_{k}=\mu^{(k)},\forall k=1,\ldots,\ell)=\frac{\tau^{-\mu}S_{\mu
}(\tau)}{\Sigma_{M}(\tau)^{\ell}}\prod_{k=1}^{\ell-1}m_{\mu^{(k)},M}%
^{\mu^{(k+1)}}
\]
which implies
\[
{\mathbb{P}}(H_{\ell+1}=\lambda\mid H_{k}=\mu^{(k)},\forall k=1,\ldots
,\ell)=\frac{S_{\lambda}(\tau)}{S_{\mu}(\tau)\Sigma_{M}(\tau)}m_{M,\mu
}^{\lambda}\tau^{\mu-\lambda}.
\]

\end{proof}

\subsection{Example: random walk to the height closest neighbors}

We now study in detail the case of the random walk in the plane with
transitions $0$ and the height closest neighbors. The underlying
representation is not irreducible and does not decompose as a sum of minuscule
representations. So the conditioning of this walk cannot be obtained by the
methods of \cite{LLP}.\footnote{The results of \cite{LLP} permit nevertheless
to study the random walk \emph{in the space} ${\mathbb{R}}^{3}$ with
transitions $\pm\varepsilon_{1}\pm\varepsilon_{2}\pm\varepsilon_{3}$
corresponding to the weights of the spin representation of ${\mathfrak{g}%
}={\mathfrak{so}}_{9}$.}

The root system of type $C_{2}$ is realized in ${\mathbb{R}}^{2}%
{\mathbb{=R\varepsilon}}_{1}\oplus{\mathbb{R\varepsilon}}_{2}$. The Cartan
matrix is
\[
A=\left(
\begin{array}
[c]{cc}%
2 & -1\\
-2 & 2
\end{array}
\right)
\]
The simple roots are then $\alpha_{1}=\varepsilon_{1}-\varepsilon_{2}$ and
$\alpha_{2}=2\varepsilon_{2}$. We have $P={\mathbb{Z}}^{2}$.\ The fundamental
weights are $\omega_{1}=\varepsilon_{1}$ and $\omega_{2}=\varepsilon
_{1}+\varepsilon_{2}.$ We have ${\mathcal{C}}=\{(x,y)\in{\mathbb{R}}^{2}\mid
x\geq y\geq0\}$ and $P_{+}=\{\lambda=(\lambda_{1},\lambda_{2})\mid\lambda
_{1}\geq\lambda_{2}\geq0\},$ the set of partitions with two parts . Choose
$\tau_{1}\in]0,1[,\tau_{2}\in]0,1[.\ $For $\lambda=(\lambda_{1},\lambda
_{2})\in P_{+}$, we have $\lambda=\lambda_{1}\alpha_{1}+\frac{\lambda
_{1}+\lambda_{2}}{2}\alpha_{2}$. Thus $\tau^{\lambda}=\tau_{1}^{\lambda_{1}%
}(\sqrt{t_{2}})^{\lambda_{1}+\lambda_{2}}$.\ 

Consider the ${\mathfrak{sp}}_{4}({\mathbb{C}})$-module $M=V(1)^{\oplus a_{1}%
}\oplus V(1,1)^{\oplus a_{2}}$. The elementary paths in $B(\varkappa)$ can be
easily described from the highest weight paths
\[
\pi_{1}:t\mapsto t\varepsilon_{1}{\text{ and }}\gamma_{12}:\left\{
\begin{array}
[c]{l}%
2t\varepsilon_{1},t\in\lbrack0,\frac{1}{2}]\\
\varepsilon_{1}+2(t-\frac{1}{2})\varepsilon_{2},t\in]\frac{1}{2},1]
\end{array}
\right.  {\text{ in }}{\mathcal{C}}{\text{.}}
\]
We obtain $B(\varkappa)=B(\pi_{1})\oplus B(\gamma_{12})$ where

\begin{enumerate}
\item $B(\pi_{1}):\pi_{1}:t\mapsto t\varepsilon_{1},\pi_{2}:t\mapsto
t\varepsilon_{2},\pi_{\overline{1}}:t\mapsto-t\varepsilon_{1}$ and
$\pi_{\overline{2}}:t\mapsto-t\varepsilon_{2}$ with $t\in\lbrack0,1]$

\item $B(\gamma_{12}):$
\begin{align*}
\gamma_{12}  &  :\left\{
\begin{array}
[c]{l}%
2t\varepsilon_{1},t\in\lbrack0,\frac{1}{2}]\\
\varepsilon_{1}+2(t-\frac{1}{2})\varepsilon_{2},t\in]\frac{1}{2},1]
\end{array}
\right.  \quad\gamma_{1\overline{2}}:\left\{
\begin{array}
[c]{l}%
2t\varepsilon_{1},t\in\lbrack0,\frac{1}{2}]\\
\varepsilon_{1}-2(t-\frac{1}{2})\varepsilon_{2},t\in]\frac{1}{2},1]
\end{array}
\right.  ,\\
\gamma_{2\overline{2}}  &  :\left\{
\begin{array}
[c]{l}%
2t\varepsilon_{2},t\in\lbrack0,\frac{1}{2}]\\
\varepsilon_{2}-2(t-\frac{1}{2})\varepsilon_{2},t\in]\frac{1}{2},1]
\end{array}
\right. \\
\gamma_{2\overline{1}}  &  :\left\{
\begin{array}
[c]{l}%
2t\varepsilon_{2},t\in\lbrack0,\frac{1}{2}]\\
\varepsilon_{2}-2(t-\frac{1}{2})\varepsilon_{1},t\in]\frac{1}{2},1]
\end{array}
\right.  {\text{ and }}\quad\gamma_{\overline{2}\overline{1}}:\left\{
\begin{array}
[c]{l}%
-2t\varepsilon_{2},t\in\lbrack0,\frac{1}{2}]\\
-\varepsilon_{2}-2(t-\frac{1}{2})\varepsilon_{1},t\in]\frac{1}{2},1]
\end{array}
\right.  .
\end{align*}

\end{enumerate}

The crystal $B(\varkappa)$ is the union of the two following crystals
\begin{gather*}
\pi_{1}\overset{1}{\rightarrow}\pi_{2}\overset{2}{\rightarrow}\pi
_{\overline{2}}\overset{1}{\rightarrow}\pi_{\overline{1}}\\
\gamma_{12}\overset{2}{\rightarrow}\gamma_{1\overline{2}}\overset
{1}{\rightarrow}\gamma_{2\overline{2}}\overset{1}{\rightarrow}\gamma
_{2\overline{1}}\overset{2}{\rightarrow}\gamma_{\overline{2}\overline{1}}%
\end{gather*}
Observe that for the path $\gamma_{2\overline{2}}$, we have $\gamma
_{2\overline{2}}(0)=\gamma_{2\overline{2}}(1)=0$. The other transitions
correspond to the 8 closest neighbors in the lattice ${\mathbb{Z}}^{2}$.
\bigskip

We now define the probability distribution $p$ on the set $B(\pi_{1})^{\oplus
m_{1}}\oplus B(\gamma_{12})^{\oplus m_{2}}$. We have
\[
\Sigma_{M}(\tau)=a_{1}\frac{1+\tau_{1}+\tau_{1}\tau_{2}+\tau_{1}^{2}\tau_{2}%
}{\tau_{1}\sqrt{\tau_{2}}}+a_{2}\frac{1+\tau_{2}+\tau_{1}\tau_{2}+\tau_{1}%
^{2}\tau_{2}+\tau_{1}^{2}\tau_{2}^{2}}{\tau_{1}\tau_{2}}.
\]
The probability $p$ is defined by
\begin{align*}
p_{1}  &  =\frac{a_{1}}{\Sigma_{M}(\tau)\tau_{1}\sqrt{\tau_{2}}},\quad
p_{2}=\frac{a_{1}}{\Sigma_{M}(\tau)\sqrt{\tau_{2}}},\quad p_{\overline{2}%
}=\frac{a_{1}\sqrt{\tau_{2}}}{\Sigma_{M}(\tau)},\quad p_{\overline{1}}%
=\frac{a_{1}\tau_{1}\sqrt{\tau_{2}}}{\Sigma_{M}(\tau)}\\
p_{12}  &  =\frac{a_{2}}{\Sigma_{M}(\tau)\tau_{1}\tau_{2}},\quad
p_{1\overline{2}}=\frac{a_{2}}{\Sigma_{M}(\tau)\tau_{1}},\quad p_{2\overline
{2}}=\frac{a_{2}}{\Sigma_{M}(\tau)},\quad p_{2\overline{1}}=\frac{a_{2}%
\tau_{1}}{\Sigma_{M}(\tau)},\quad p_{\overline{2}\overline{1}}=\frac{a_{2}%
\tau_{1}\tau_{2}}{\Sigma_{M}(\tau)}.
\end{align*}
The set of positive roots is
\[
R_{+}=\{\varepsilon_{1}\pm\varepsilon_{2},2\varepsilon_{1},2\varepsilon
_{2}\}{\text{ and }}\rho=(2,1).
\]
The action of the Weyl group on ${\mathbb{Z}}^{2}$ yields the 8
transformations which preserves the square of vertices $(\pm1,\pm1)$. For any
partition $\mu=(\mu_{1},\mu_{2})\in P_{+}$, we obtain by the Weyl character
formula and Theorem \ref{Th_PSi}
\begin{multline*}
\psi(\mu)={\mathbb{P}}_{\mu}({\mathcal{W}}(t)\in{\mathcal{C}},t\geq
0)=(1-\tau_{1})(1-\tau_{2})(1-\tau_{1}\tau_{2})(1-\tau_{1}^{2}\tau_{2})S_{\mu
}(\tau_{1},\tau_{2})=\\
\sum_{w\in W}\varepsilon(w)\tau^{w(\mu+\rho)-(\mu+\rho)}=\\
1+\tau_{1}^{\mu_{1}-\mu_{2}+1}\tau_{2}^{\mu_{1}+2}+\tau_{1}^{2\mu_{1}+4}%
\tau_{2}^{\mu_{1}+\mu_{2}+3}+\tau_{1}^{\mu_{1}+\mu_{2}+3}\tau_{2}^{\mu_{2}%
+1}\\
-\tau_{1}^{\mu_{1}-\mu_{2}+1}-\tau_{2}^{\mu_{2}+1}-\tau_{1}^{2\mu_{1}+4}%
\tau_{2}^{\mu_{1}+2}-\tau_{1}^{\mu_{1}+\mu_{2}+3}\tau_{2}^{\mu_{1}+\mu_{2}+3}%
\end{multline*}
Moreover, the law of the random walk $W$ conditioned by the event
\[
E:=({\mathcal{W}}(t)\in{\mathcal{C}}\ {\text{ for any }}t\geq0)
\]
is the same as the law of the Markov chain $H$ defined as the generalized
Pitman transform of $W$ (see Theorem \ref{Th_LawH}). To compute the associated
transition matrix $M,$ we need the tensor product multiplicities $m_{\mu
,M}^{\lambda}=a_{1}m_{(1,0),\mu}^{\lambda}+a_{2}m_{(1,1),\mu}^{\lambda}$. We
have for any partitions $\lambda$ and $\mu$ with two parts
\[
m_{(1,0),\mu}^{\lambda}=\left\{
\begin{array}
[c]{l}%
1{\text{ if }}\lambda{\text{ and }}\mu{\text{ are equal or differ by only one
box}}\\
0{\text{ otherwise}}%
\end{array}
\right.
\]
and
\[
m_{(1,1),\mu}^{\lambda}=\left\{
\begin{array}
[c]{l}%
1{\text{ if }}\lambda{\text{ and }}\mu{\text{ are equal or differ by two boxes
in different rows}}\\
0{\text{ otherwise.}}%
\end{array}
\right.
\]
We thus have for any $\lambda,\mu\in P_{+}$
\[
\Pi^{E}(\mu,\lambda)=\frac{\psi(\lambda)}{\psi(\mu)\Sigma_{M}(\tau)}\left(
a_{1}m_{(1,0),\mu}^{\lambda}+a_{2}m_{(1,1),\mu}^{\lambda}\right)  \tau
_{1}^{\mu_{1}-\lambda_{1}}\sqrt{\tau_{2}}^{(\mu_{1}+\mu_{2}-\lambda
_{1}-\lambda_{2})}.
\]

\section{Some consequences}

{In the remaining of the paper, \emph{we assume that }$g$\emph{ is of finite
type} and ${\mathcal{W}}$ is constructed from an irreducible ${\mathfrak{g}}%
$-module $V(\kappa)$ in the category ${\mathcal{O}}_{int}$. Then the crystal
}$B(\pi_{\kappa})$ has a finite number of paths which all have the same length
as $\pi_{\kappa}$ since $W$ contains only isometries.

\subsection{Asymptotics for the multiplicities $f_{\lambda/\mu}^{\ell}$}

We will use later a quotient version of a local limit theorem for these random
paths; following \cite{LLP}, we may state the

\begin{proposition}
\label{quotientLLT} Let $(g_{\ell}),(h_{\ell})$ be two sequences in $P$ such
that the events $(W_{\ell}=g_{\ell})$ and $(W_{\ell}=g_{\ell}+h_{\ell})$ have
non zero probability for $\ell>0$ large enough. Assume there exists
$\alpha<2/3$ such that $\lim{\ell}^{-\alpha}\|g_{\ell}-{\ell}m\|=0$ and
$\lim{\ell}^{-1/2}\|h_{\ell}\|=0$. Then, when ${\ell}$ tends to infinity, one
gets
\[
{\mathbb{P}}_{0}(W_{\ell}=g_{\ell}+h_{\ell},{\mathcal{W}}(t)\in{\mathcal{C}%
}\ {\text{ for any }}t\in\lbrack0,\ell])\sim{\mathbb{P}}_{0}(W_{\ell}=g_{\ell
},{\mathcal{W}}(t)\in{\mathcal{C}}\ {\text{ for any }}t\in\lbrack0,\ell]).
\]

\end{proposition}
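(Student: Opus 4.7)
The plan is to derive this continuous-time quotient local limit theorem from the analogous discrete-time result in \cite{LLP}, exploiting the finite-type hypothesis on $\mathfrak{g}$ (which makes $B(\pi_\kappa)$ a finite set of elementary paths of uniformly bounded length).

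First I would reduce the continuous cone-confinement event to its discrete counterpart. Since $B(\pi_\kappa)$ is finite, there is $R > 0$ such that every $\pi \in B(\pi_\kappa)$ satisfies $\|\pi(t)\| \le R$ for all $t \in [0,1]$. Writing $\mathcal{C}_R = \{x \in \mathcal{C} \mid d(x,\partial\mathcal{C}) \ge R\}$, the continuous event $\{\mathcal{W}(t) \in \mathcal{C},\ t \in [0,\ell]\}$ is implied by $\{W_k \in \mathcal{C}_R,\ 0 \le k \le \ell\}$ and implies $\{W_k \in \mathcal{C},\ 0 \le k \le \ell\}$. For walks with drift $m(1) \in \mathring{\mathcal{C}}$ and endpoint $g_\ell$ at macroscopic distance from $\partial\mathcal{C}$, classical exponential tilting estimates show that these three probabilities are asymptotically equivalent, the contribution of paths that hit the shell $\mathcal{C}\setminus\mathcal{C}_R$ being exponentially negligible compared to those staying well inside. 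The same reasoning applies to the endpoint $g_\ell+h_\ell$, so it suffices to prove the quotient asymptotic for the purely discrete events $\{W_\ell = g,\ W_k \in \mathcal{C}\ \forall k \le \ell\}$.

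Next I would invoke the discrete-time quotient local limit theorem of \cite{LLP}. Its proof proceeds by a time-splitting at an intermediate scale $k_\ell$ with $k_\ell \to \infty$ and $k_\ell = o(\ell)$: using the Markov property at time $\ell - k_\ell$,
\[
\mathbb{P}_0(W_\ell = g,\ W_k \in \mathcal{C}) = \sum_y \mathbb{P}_0(W_{\ell-k_\ell} = y,\ W \in \mathcal{C})\,\mathbb{P}_y(W_{k_\ell} = g,\ W \in \mathcal{C}),
\]
one observes that the dominant contribution comes from $y$ lying within Gaussian fluctuations of $(\ell - k_\ell)\,m(1)$, which are deep inside $\mathcal{C}$; for such $y$ the confinement on the last $k_\ell$ steps has probability $1-o(1)$, and $\mathbb{P}_y(W_{k_\ell} = g)$ is controlled by the standard (unconstrained) local limit theorem. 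A careful choice of $k_\ell$, balancing $\|g_\ell - \ell m\| = o(\ell^\alpha)$ with $\alpha < 2/3$ against $\|h_\ell\| = o(\sqrt{\ell})$, then forces the free Gaussian density ratio $\mathbb{P}_y(W_{k_\ell} = g_\ell + h_\ell)/\mathbb{P}_y(W_{k_\ell} = g_\ell)$ to tend to $1$ uniformly on the bulk of relevant $y$'s; since the left-hand factor in the decomposition is the same for both targets, the quotient asymptotic follows.

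The main obstacle is the uniformity required in Step 2: one must simultaneously control the cost of pinning $W_{k_\ell}$ at $g_\ell + h_\ell$ rather than $g_\ell$ and the cost of keeping the path inside $\mathcal{C}$, and these two constraints interact. The exponent $\alpha < 2/3$ appears precisely as the threshold at which the large-deviation cost $\|g_\ell - \ell m\|^2/\ell$ can be kept negligible in the Gaussian density ratio while $k_\ell$ is chosen of an intermediate order (say $k_\ell = \ell^\beta$ with $\alpha < \beta < 1$) large enough to wash out the $h_\ell$ perturbation, yet small enough that the time-$(\ell-k_\ell)$ walk has already entered the bulk of its Gaussian concentration region. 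Once the discrete version is granted, as the authors indicate is the case via \cite{LLP}, the continuous-time adjustment of Step 1 is a routine concentration estimate.
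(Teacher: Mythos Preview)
Your time-splitting is in the wrong direction, and this is a genuine obstruction rather than a cosmetic difference from the paper.

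You split at time $\ell - k_\ell$ with $k_\ell = \ell^\beta$, $\beta<1$, keep the confinement on the long initial segment $[0,\ell-k_\ell]$, and apply the unconstrained local limit theorem on the short tail of $k_\ell$ steps. For the Gaussian density ratio
\[
\frac{\mathbb{P}_y(W_{k_\ell}=g_\ell+h_\ell)}{\mathbb{P}_y(W_{k_\ell}=g_\ell)}
\]
to tend to $1$, you need (at the very least) $\|h_\ell\|=o(\sqrt{k_\ell})$, since the quadratic term $Q(h_\ell)/k_\ell$ in the exponent must vanish. But the hypothesis only gives $\|h_\ell\|=o(\sqrt{\ell})$, and with $k_\ell=o(\ell)$ this is strictly weaker: take for instance $\|h_\ell\|\sim\sqrt{\ell}/\log\ell$, which satisfies the hypothesis yet has $\|h_\ell\|^2/k_\ell\sim \ell^{1-\beta}/(\log\ell)^2\to\infty$. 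The cross term $\langle h_\ell,\,g_\ell-y-k_\ell m\rangle/k_\ell$ fails for the same reason once you remember that the dominant $y$ fluctuate on scale $\sqrt{\ell}$ around $(\ell-k_\ell)m$.

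The paper (following \cite{LLP}) does the opposite: it splits at $b_\ell=\ell^\beta$ with $\tfrac12<\beta<\tfrac23$ near the \emph{beginning}, keeps the continuous confinement only on the short segment $[0,b_\ell]$ (this factor being identical for both targets $g_\ell$ and $g_\ell+h_\ell$), and applies the local limit theorem on the long free segment of $\ell-b_\ell\sim\ell$ steps. There $\|h_\ell\|=o(\sqrt{\ell})$ is exactly the right scale, and the ratio $\mathbb{P}_0(W_{\ell-b_\ell}=g_\ell+h_\ell-x)/\mathbb{P}_0(W_{\ell-b_\ell}=g_\ell-x)\to 1$ uniformly in $x$. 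The constraint $\beta<2/3$ is what keeps $g_\ell-x$ in the validity window of the local limit theorem.

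A secondary point: your Step~1 reduction from the continuous event $\{\mathcal{W}(t)\in\mathcal{C},\,t\in[0,\ell]\}$ to the discrete event $\{W_k\in\mathcal{C},\,k\le\ell\}$ is not needed and is not what the paper does. Because the confinement factor in the paper's decomposition is common to both endpoints, it cancels in the ratio regardless of whether it is continuous or discrete; there is no need to compare the two.
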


\begin{proof}
The proof of this statement follows line by line the one of Theorem 4.3 in
\cite{LLP}. Without loss of generality, we may assume that the law of the
$X_{\ell}$ is aperiodic in $P$, which means that its support generates $P$ and
is not included in a coset of a proper subgroup of $P$: this readily implies
that ${\mathbb{P}}_{0}(W_{\ell}=g_{\ell})>0$ and ${\mathbb{P}}_{0}(W_{\ell
}=g_{\ell}+h_{\ell})>0$ for when $(g_{\ell})_{\ell}$ and $(h_{\ell})_{\ell}$
satisfy the conditions of the proposition and $\ell$ large enough. When the
law of the $X_{\ell}$ is not aperiodic, the random walk $({\mathcal{W}}_{\ell
})_{\ell}$ has a finite number $p$ of periodic classes and the condition
${\mathbb{P}}_{0}(W_{\ell}=g_{\ell})>0$ and ${\mathbb{P}}_{0}(W_{\ell}%
=g_{\ell}+h_{\ell})>0$ corresponds to the fact that $g_{\ell}$ and $g_{\ell
}+h_{\ell}$ belong to the same periodic class indexed by the value of $\ell$
modulo $p$; the statement in this case follows from the one in the aperiodic
one, by induction of the random walk on each periodic class.

We fix a real number $\beta$ such that $\frac{1}{2}<\alpha<\beta<\frac{2}{3}$,
set $b_{\ell}=[{\ell}^{\beta}]$ and choose $\delta>0$ be such that $B_{{\ell}%
}=:=B(m,\delta)\subset{\mathcal{C}}$.

As in \cite{LLP}, we first check that $\displaystyle\frac{{\mathbb{P}}%
_{0}(W_{\ell}=g_{\ell},{\mathcal{W}}(t)\in{\mathcal{C}}\ {\text{ for any }%
}t\in\lbrack0,\ell])}{{\mathbb{P}}_{0}(W_{\ell}=g_{\ell},W_{b_{\ell}}\in
B_{b_{\ell}},{\mathcal{W}}(t)\in{\mathcal{C}}\ {\text{ for any }}t\in
\lbrack0,b_{\ell}])}\rightarrow1;$ in others words, one may \textquotedblleft
forget\textquotedblright\ the conditioning $({\mathcal{W}}(t)\in{\mathcal{C}%
}\ {\text{ for any }}t\in\lbrack b_{\ell},l])$ in the event $(W_{\ell}%
=g_{\ell},{\mathcal{W}}(t)\in{\mathcal{C}}\ {\text{ for any }}t\in
\lbrack0,\ell])$. The same result holds if one replaces $g_{\ell}$ by
$g_{\ell}+h_{\ell}$ for $\lim{\ell}^{-\alpha}\Vert g_{\ell}+h_{\ell}-{\ell
}m\Vert=0$.

To achieve the proof of the proposition, it now suffices to establish that
\[
\frac{{\mathbb{P}}_{0}(W_{\ell}=g_{\ell}+h_{\ell},W_{b_{\ell}}\in B_{b_{\ell}%
},{\mathcal{W}}(t)\in{\mathcal{C}}\ {\text{ for any }}t\in\lbrack0,b_{\ell}%
])}{{\mathbb{P}}_{0}(W_{\ell}=g_{\ell},W_{b_{\ell}}\in B_{b_{\ell}%
},{\mathcal{W}}(t)\in{\mathcal{C}}\ {\text{ for any }}t\in\lbrack0,b_{\ell}%
])}\ \rightarrow1.
\]
Since the increments of the random walk $({\mathcal{W}}_{\ell})_{\ell}$ are
independent with the same law, we may write
\begin{multline*}
{\mathbb{P}}_{0}(W_{\ell}=g_{\ell},W_{b_{\ell}}\in B_{b_{\ell}},{\mathcal{W}%
}(t)\in{\mathcal{C}}\ {\text{ for any }}t\in\lbrack0,b_{\ell}])\\
=\sum_{x\in B_{b_{\ell}}\cap P_{+}}{\mathbb{P}}_{0}({\mathcal{W}}_{{\ell
}-b_{\ell}}=g_{\ell}-x)\times{\mathbb{P}}_{0}({\mathcal{W}}_{\ell
}=x,{\mathcal{W}}(t)\in{\mathcal{C}}\ {\text{ for any }}t\in\lbrack0,b_{\ell
}]).
\end{multline*}
This leads to the proposition since $\displaystyle{\mathbb{P}}_{0}%
({\mathcal{W}}_{{\ell}-b_{\ell}}=g_{\ell}-x)\sim{\mathbb{P}}_{0}({\mathcal{W}%
}_{{\ell}-b_{\ell}}=g_{\ell}+h_{\ell}-x)$ uniformly in $x\in B_{b_{\ell}}$.
\end{proof}

\bigskip

Consider $\lambda,\mu\in P_{+}$ and $\ell\geq1$ such that $f_{\lambda/\mu
}^{\ell}>0$ and $f_{\lambda}^{\ell}>0$. Then, we must have $\ell\kappa
+\mu-\lambda\in Q_{+}$ and $\ell\kappa-\lambda\in Q_{+}$. Therefore $\mu\in Q$
and it decomposes as a sum of simple roots. In the sequel, we will assume the
condition $\mu\in Q\cap P_{+}$ is satisfied.

We assume the notation and hypotheses of Theorem \ref{Th_PSi}. Consider a
sequence $\lambda^{(\ell)}$ of dominant weights such that $\lambda^{(\ell
)}=\ell m(1)+o(\ell)$. Following Proposition 5.3 in \cite{LLP}, one gets the
following decomposition
\begin{equation}
\frac{f_{\lambda^{(\ell)}/\mu}^{\ell}}{f_{\lambda^{(\ell)}}^{\ell}}%
=\sum_{\gamma\in P}K_{\mu,\gamma}\frac{f_{\lambda^{(\ell)}-\gamma}^{\ell}%
}{f_{\lambda^{(\ell)}}^{\ell}}=\tau^{-\mu}\sum_{\gamma\in P}K_{\mu,\gamma
}\frac{f_{\lambda^{(\ell)}-\gamma}^{\ell}\tau^{-\lambda^{(\ell)}+\gamma}%
}{f_{\lambda^{(\ell)}}^{\ell}\tau^{-\lambda^{(\ell)}}}\tau^{\mu-\gamma}
\label{quo-f}%
\end{equation}
where the sums are finite since the set of weights in $V(\mu)$ is finite. By
Assertion 2 of Proposition \ref{Prop_util}, we have, for any $\gamma\in P$ and
$\ell$ large enough
\[
\frac{f_{\lambda^{(\ell)}-\gamma}^{\ell}\tau^{-\lambda^{(\ell)}+\gamma}%
}{f_{\lambda^{(\ell)}}^{\ell}\tau^{-\lambda^{(\ell)}}}=\frac{f_{\lambda
^{(\ell)}-\gamma}^{\ell}\tau^{\ell\kappa-\lambda^{(\ell)}+\gamma}}%
{f_{\lambda^{(\ell)}}^{\ell}\tau^{\ell\kappa-\lambda^{(\ell)}}}=\frac
{{\mathbb{P}}({\mathcal{W}}_{\ell}=\lambda^{(\ell)}-\gamma,{\mathcal{W}}%
_{t}\in{\mathcal{C}}{\text{ for any }}t\in\lbrack0,\ell])}{{\mathbb{P}%
}({\mathcal{W}}_{\ell}=\lambda^{(\ell)},{\mathcal{W}}_{t}\in{\mathcal{C}%
}{\text{ for any }}t\in\lbrack0,\ell])}%
\]
this last quotient tending to $1$ when $\ell$ tends to infinity, by
Proposition \ref{quotientLLT}. This implies
\[
\lim_{\ell\rightarrow+\infty}\frac{f_{\lambda^{(\ell)}/\mu}^{\ell}}%
{f_{\lambda^{(\ell)}}^{\ell}}=\tau^{-\mu}\sum_{\gamma\in P}K_{\mu,\gamma}%
\tau^{\mu-\gamma}=\tau^{-\mu}S_{\mu}(\tau).
\]
We have thus proved the following consequence of Theorem \ref{Th_PSi}

\begin{corollary}
For any $\mu\in Q\cap P_{+}$, and any sequence of dominant weights of the form
$\lambda^{(\ell)}=\ell m(1)+o(\ell)$, we have $\lim_{\ell\rightarrow+\infty
}\frac{f_{\lambda^{(\ell)}/\mu}^{\ell}}{f_{\lambda}^{\ell}}=\tau^{-\mu}S_{\mu
}(\tau)$.
\end{corollary}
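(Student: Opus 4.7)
The plan is to decompose $f_{\lambda^{(\ell)}/\mu}^{\ell}/f_{\lambda^{(\ell)}}^{\ell}$ into a finite combination of ratios $f_{\lambda^{(\ell)}-\gamma}^{\ell}/f_{\lambda^{(\ell)}}^{\ell}$ indexed by weights $\gamma$ of $V(\mu)$, to control each such ratio by the quotient local limit theorem (Proposition~\ref{quotientLLT}), and then to recognize the limiting sum as $\tau^{-\mu}S_\mu(\tau)$ using the definition of $S_\mu$.

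The key combinatorial step is to prove that, whenever $\lambda$ lies deep enough inside $\mathcal{C}$ that $\lambda-\gamma$ remains dominant for every weight $\gamma$ of $V(\mu)$, one has
\[
f_{\lambda/\mu}^{\ell}=\sum_{\gamma\in P}K_{\mu,\gamma}\,f_{\lambda-\gamma}^{\ell}.
\]
I would obtain this from the Brauer--Klimyk/Racah--Speiser rule applied to $s_\mu\cdot s_\kappa^{\ell}=\sum_\nu f_\nu^{\ell}\,s_\mu s_\nu$: the possible Weyl-symmetric sign corrections in each $s_\mu s_\nu$ collapse to the single contribution of the identity element precisely when all shifts $\nu+\gamma+\rho$ stay strictly dominant. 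Since $\lambda^{(\ell)}=\ell m(1)+o(\ell)$ with $m(1)\in\mathring{\mathcal{C}}$ and only finitely many $\gamma$ with $K_{\mu,\gamma}>0$ appear, this depth condition is automatic for $\ell$ large, justifying equation~(\ref{quo-f}).

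Next, Assertion~2 of Proposition~\ref{Prop_util} specialized to $\mu=0$ gives $\mathbb{P}_0(W_\ell=\beta,\,\mathcal{W}(t)\in\mathcal{C}\text{ for }t\in[0,\ell])=f_\beta^{\ell}\tau^{\ell\kappa-\beta}/S_\kappa(\tau)^{\ell}$, so
\[
\frac{f_{\lambda^{(\ell)}-\gamma}^{\ell}}{f_{\lambda^{(\ell)}}^{\ell}}=\tau^{-\gamma}\,\frac{\mathbb{P}_0\bigl(W_\ell=\lambda^{(\ell)}-\gamma,\,\mathcal{W}\text{ stays in }\mathcal{C}\bigr)}{\mathbb{P}_0\bigl(W_\ell=\lambda^{(\ell)},\,\mathcal{W}\text{ stays in }\mathcal{C}\bigr)}.
\]
Applying Proposition~\ref{quotientLLT} with $g_\ell=\lambda^{(\ell)}$ and the bounded shift $h_\ell=-\gamma$ drives the probability quotient to $1$, giving $f_{\lambda^{(\ell)}-\gamma}^{\ell}/f_{\lambda^{(\ell)}}^{\ell}\to\tau^{-\gamma}$ for each fixed $\gamma$.

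Interchanging the limit with the finite sum over $\gamma$ and using $S_\mu(\tau)=\sum_{\gamma}K_{\mu,\gamma}\tau^{\mu-\gamma}$ yields
\[
\lim_{\ell\to\infty}\frac{f_{\lambda^{(\ell)}/\mu}^{\ell}}{f_{\lambda^{(\ell)}}^{\ell}}=\sum_{\gamma\in P}K_{\mu,\gamma}\tau^{-\gamma}=\tau^{-\mu}S_\mu(\tau),
\]
which is the claim. The main obstacle I anticipate is the first step: one must argue that the Brauer--Klimyk sign corrections genuinely disappear for the relevant $\lambda^{(\ell)}$, which is exactly where the hypothesis $m(1)\in\mathring{\mathcal{C}}$ is essential, since it guarantees that $\lambda^{(\ell)}-\gamma$ eventually avoids every Weyl wall. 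Once this identity is secured, the remainder is a direct translation of the probabilistic expressions provided by Proposition~\ref{Prop_util} together with the quotient local limit theorem.
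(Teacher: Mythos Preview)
Your proposal is correct and follows essentially the same route as the paper: the decomposition $f_{\lambda/\mu}^{\ell}=\sum_{\gamma}K_{\mu,\gamma}f_{\lambda-\gamma}^{\ell}$, the translation into conditioned-path probabilities via Proposition~\ref{Prop_util}, and the application of Proposition~\ref{quotientLLT} are exactly the steps the authors use. The only difference is in how the decomposition is justified: the paper simply cites Proposition~5.3 of \cite{LLP}, whereas you supply a self-contained argument via the Brauer--Klimyk rule, observing that $m(1)\in\mathring{{\mathcal{C}}}$ forces $\lambda^{(\ell)}-\gamma$ to be strictly dominant for all relevant $\gamma$ once $\ell$ is large; this is a perfectly legitimate (and arguably more transparent) way to obtain the same identity.
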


\noindent\textbf{Remark: }One can regard this corollary as an analogue of the
asymptotic behavior of the number of paths in the Young lattice obtained by
Kerov and Vershik (see \cite{Ker} and the references therein).

\subsection{Probability that $W$ stay in ${\mathcal{C}}$}

By Theorem \ref{Th_PSi}, we can compute ${\mathbb{P}}_{\mu}({\mathcal{W}%
}(t)\in{\mathcal{C}}\ {\text{ for any }}t\in\lbrack0,\ell])$.\ Unfortunately,
this does not permit do make explicit ${\mathbb{P}}_{\mu}(W_{\ell}%
\in{\mathcal{C}}$ $\forall\ell\geq1)$. Nevertheless, we have the immediate
inequality
\[
{\mathbb{P}}_{\mu}({\mathcal{W}}(t)\in{\mathcal{C}}\ {\text{ for any }}%
t\geq0)\leq{\mathbb{P}}_{\mu}(W_{\ell}\in{\mathcal{C}}\ {\text{ for any }}%
\ell\geq1).
\]
Since we have assumed that ${\mathfrak{g}}$ is of finite type, each crystal
$B(\pi_{\kappa})$ is finite. For any $i=1,\ldots,n$, write $m_{0}(i)\geq1$ for
the maximal length of the $i$-chains appearing in $B(\pi_{\kappa})$.\ Set
$\kappa_{0}=\sum_{i=1}^{n}(m_{0}(i)-1)\omega_{i}$.\ Observe that $\kappa
_{0}=0$ if and only if $\kappa$ is a minuscule weight.

\begin{lemma}
Assume $W_{k}\in{\mathcal{C}}$ for any $k=1,\ldots,\ell$. Then $\kappa
_{0}+{\mathcal{W}}(t)\in{\mathcal{C}}$ for any $t\in\lbrack0,\ell].$
\end{lemma}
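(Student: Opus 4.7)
The strategy is to analyse $\mathcal{W}$ piece by piece on each interval $[k-1,k]$ and to show that on each such piece the excursion below the walls of $\mathcal{C}$ is at most $\kappa_0$. Fix $i\in\{1,\ldots,n\}$ and $t\in[0,\ell]$, and write $t=k-1+s$ with $k\in\{1,\ldots,\ell\}$ and $s\in[0,1]$. Unwinding the definition of $\mathcal{W}$ (cf.\ (\ref{tens_path}) and the formula for $\mathcal{W}$ in \S\ref{Subsec-RPW}) gives $\mathcal{W}(t)=W_{k-1}+X_k(s)$, whence
\[
(\kappa_0+\mathcal{W}(t))(h_i)=(m_0(i)-1)+W_{k-1}(h_i)+X_k(s)(h_i),
\]
since $\kappa_0(h_i)=m_0(i)-1$ by definition of $\kappa_0$. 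The term $W_{k-1}(h_i)$ is nonnegative: for $k-1\geq 1$ this is the standing hypothesis $W_{k-1}\in\mathcal{C}$, and for $k=1$ the walk starts at $\mu\in P_+\subset\mathcal{C}$. Thus everything reduces to the key \emph{depth estimate}
\[
X_k(s)(h_i)\geq-(m_0(i)-1)\qquad\forall s\in[0,1],\ \forall X_k\in B(\pi_\kappa).\qquad(\star)
\]

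To prove $(\star)$ I would use the Littelmann path structure of $B(\pi_\kappa)$. By assertion~(2) of Theorem~\ref{Th_Littel}, $B(\pi_\kappa)\subset\mathcal{P}_{\min\mathbb{Z}}$, so for any $\pi\in B(\pi_\kappa)$ the quantity $m_\pi:=\min_{s\in[0,1]}h_\pi(s)=\min_{s\in[0,1]}\langle\pi(s),\alpha_i^\vee\rangle$ is a nonpositive integer. By the very construction of $\tilde{e}_i$ recalled in the Littelmann-model paragraph, each application of $\tilde{e}_i$ raises this minimum by exactly one unit until the path is killed, so $\varepsilon_i(\pi)=-m_\pi$. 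But $\pi$ lies in some $i$-chain of $B(\pi_\kappa)$ whose length is, by definition of $m_0(i)$, at most $m_0(i)-1$; hence $\varepsilon_i(\pi)\leq m_0(i)-1$. Combining these two facts gives $m_\pi\geq-(m_0(i)-1)$, which, after identifying $h_\pi(s)$ with the evaluation $\pi(s)(h_i)$ (up to the normalization of $\alpha_i^\vee$ fixed at the start of the Littelmann subsection), is exactly $(\star)$.

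Substituting $(\star)$ into the decomposition displayed above yields
\[
(\kappa_0+\mathcal{W}(t))(h_i)\geq(m_0(i)-1)+0-(m_0(i)-1)=0
\]
for every $i=1,\ldots,n$ and every $t\in[0,\ell]$, which is precisely $\kappa_0+\mathcal{W}(t)\in\mathcal{C}$. The only nontrivial ingredient is the depth estimate $(\star)$; everything else is bookkeeping. Accordingly the main (mild) obstacle is the passage from the combinatorial datum \textquotedblleft maximal $i$-chain length of $B(\pi_\kappa)$\textquotedblright\ to the analytic statement controlling the minimum of the piecewise linear function $h_\pi$ on $[0,1]$; this passage is exactly what the integrality $B(\pi_\kappa)\subset\mathcal{P}_{\min\mathbb{Z}}$ is designed to provide, and is used here in its simplest form.
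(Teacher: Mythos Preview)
Your depth estimate $(\star)$ is off by one, and this is a genuine gap. By the paper's definition, $m_0(i)$ is the maximal \emph{length} (number of arrows) of an $i$-chain in $B(\pi_\kappa)$, so a path $\pi$ can sit at the very bottom of a maximal chain and have $\varepsilon_i(\pi)=m_0(i)$, not merely $m_0(i)-1$. Concretely, take $\kappa$ minuscule (so $m_0(i)=1$ and $\kappa_0=0$): the path $\pi=\tilde f_i(\pi_\kappa)$ has $\varepsilon_i(\pi)=1$, i.e.\ $\min_s h_\pi(s)=-1$, violating your claimed bound $\geq -(m_0(i)-1)=0$. In that same example, starting from $W_{k-1}=0\in\mathcal{C}$ and taking $X_k=\pi$, your inequality would assert $\mathcal{W}(t)(h_i)\geq 0$ on $[k-1,k]$, which is false. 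Notice that your argument never invokes the hypothesis $W_k\in\mathcal{C}$; that is the tell-tale sign.

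The repair is short and stays within your framework. One has only $\varepsilon_i(X_k)\leq m_0(i)$. If $\varepsilon_i(X_k)\leq m_0(i)-1$ your computation goes through verbatim. If $\varepsilon_i(X_k)=m_0(i)$, then $X_k$ is at the end of its $i$-chain, so $\varphi_i(X_k)=0$ and hence $\mathrm{wt}(X_k)(h_i)=\varphi_i(X_k)-\varepsilon_i(X_k)=-m_0(i)$; now the hypothesis $W_k\in\mathcal{C}$ gives $W_{k-1}(h_i)=W_k(h_i)+m_0(i)\geq m_0(i)$, and plugging into your decomposition yields
\[
(\kappa_0+\mathcal{W}(t))(h_i)\geq (m_0(i)-1)+m_0(i)-m_0(i)=m_0(i)-1\geq 0.
\]
The paper's own proof reaches the same case split via crystal tensor-product rules (Proposition~\ref{Prop_HP}) and an induction on $\ell$, showing $\tilde e_i(\pi_{\kappa_0}\otimes\pi_1\otimes\cdots\otimes\pi_\ell)=0$; your direct minimum-of-$h_\pi$ analysis is more elementary once this extremal case is handled.
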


\begin{proof}
Since $\kappa_{0}$ is a dominant weight, we can consider $\pi_{\kappa_{0}}$
any path from $0$ to $\kappa_{0}$ which remains in ${\mathcal{C}}$.\ First
observe that the hypothesis $W_{k}\in{\mathcal{C}}$ for any $k=1,\ldots,\ell$
is equivalent to $\kappa_{0}+{\mathcal{W}}(k)\in\kappa_{0}+{\mathcal{C}}$ for
any $k=1,\ldots,\ell.$ We also know by Assertion 8 of Theorem \ref{Th_Littel}
that $B(\pi_{\kappa_{0}})\otimes B(\pi_{\kappa})^{\otimes\ell}$ is contained
in ${\mathcal{P}}_{\min{\mathbb{Z}}}$ for any $\ell\geq1$. Set ${\mathcal{W}%
}(\ell)=\pi_{1}\otimes\cdots\otimes\pi_{\ell}$. By Assertion 3 of Proposition
\ref{prop-ac_etil}, we have to prove that $\tilde{e}_{i}(\pi_{\kappa_{0}%
}\otimes\pi_{1}\otimes\cdots\otimes\pi_{\ell})=0$ for any $i=1,\ldots,n$
providing ${\mathrm{wt}}(\pi_{\kappa_{0}}\otimes\pi_{1}\otimes\cdots\otimes
\pi_{k})=\pi_{\kappa_{0}}\otimes\pi_{1}\otimes\cdots\otimes\pi_{k}(1)\in
\kappa_{0}+P_{+}$ for any $k=1,\ldots,\ell$. Fix $i=1,\ldots,n$. Set
$\kappa_{0}(i)=m_{0}(i)-1.\ $We proceed by induction.

Assume $\ell=1$.\ Since we have $\tilde{e}_{i}(\pi_{\kappa_{0}})=0$, it
suffices to prove by using Assertion 2 of Proposition \ref{Prop_HP} that
$\varepsilon_{i}(\pi_{1})\leq\varphi_{i}(\pi_{\kappa_{0}})$. By definition of
the dominant weight $\pi_{\kappa_{0}}$, we have $\varphi_{i}(\pi_{\kappa_{0}%
})=\kappa_{0}(i)$. So we have to prove that $\varepsilon_{i}(\pi_{1}%
)\leq\kappa_{0}(i)$.\ Assertion 7 of Theorem \ref{Th_Littel} and the
hypothesis ${\mathrm{wt}}(\pi_{\kappa_{0}}\otimes\pi_{1})\in\kappa_{0}+P_{+}$
permits to write
\begin{equation}
{\mathrm{wt}}(\pi_{\kappa_{0}})_{i}+{\mathrm{wt}}(\pi_{1})_{i}={\mathrm{wt}%
}(\pi_{\kappa_{0}}\otimes\pi_{1})_{i}\geq\kappa_{0}(i). \label{weights}%
\end{equation}
Recall that $\pi_{1}$ belongs to $B(\pi_{\kappa})$. So $\varepsilon_{i}%
(\pi_{1})\leq\kappa_{0}(i)+1$ because $\varepsilon_{i}(\pi_{1})$ gives the
distance of $\pi_{1}$ from the source vertex of its $i$-chain. When
$\varepsilon_{i}(\pi_{1})<\kappa_{0}(i)+1$ we are done. So assume
$\varepsilon_{i}(\pi_{1})=\kappa_{0}(i)+1$. This means that $\pi_{1}$
satisfies $\varphi_{i}(\pi_{1})=0$. Therefore, ${\mathrm{wt}}(\pi_{1}%
)_{i}=-\kappa_{0}(i)-1$. But in this case, we get by (\ref{weights})
\[
{\mathrm{wt}}(\pi_{\kappa_{0}}\otimes\pi_{1})_{i}=\kappa_{0}(i)-(\kappa
_{0}(i)+1)=-1\geq\kappa_{0}(i)
\]
hence a contradiction.

Now assume $\tilde{e}_{i}(\pi_{\kappa_{0}}\otimes\pi_{1}\otimes\cdots
\otimes\pi_{\ell-1})=0$ for any $k=1,\ldots,\ell-1$. Observe that
${\mathrm{wt}}(\pi_{\kappa_{0}}\otimes\pi_{1}\otimes\cdots\otimes\pi_{\ell
-1})_{i}=\varphi_{i}(\pi_{\kappa_{0}}\otimes\pi_{1}\otimes\cdots\otimes
\pi_{\ell-1})\geq\kappa_{0}(i)$ since $\pi_{\kappa_{0}}\otimes\pi_{1}%
\otimes\cdots\otimes\pi_{\ell-1}\in\kappa_{0}+P_{+}$ and $\tilde{e}_{i}%
(\pi_{\kappa_{0}}\otimes\pi_{1}\otimes\cdots\otimes\pi_{\ell-1})=0$. We also
have
\begin{equation}
{\mathrm{wt}}(\pi_{\kappa_{0}}\otimes\pi_{1}\otimes\cdots\otimes\pi_{\ell
-1}\otimes\pi_{\ell})_{i}={\mathrm{wt}}(\pi_{\kappa_{0}}\otimes\pi_{1}%
\otimes\cdots\otimes\pi_{\ell-1})_{i}+{\mathrm{wt}}(\pi_{\ell})_{i}\geq
\kappa_{0}(i). \label{weighs2}%
\end{equation}

We proceed as in the case $\ell=1.$ Assume first $\varepsilon_{i}(\pi_{\ell
})\leq\varphi_{i}(\pi_{\kappa_{0}}\otimes\pi_{1}\otimes\cdots\otimes\pi
_{\ell-1})$. Then by Proposition \ref{Prop_HP} and the induction equality
$\tilde{e}_{i}(\pi_{\kappa_{0}}\otimes\pi_{1}\otimes\cdots\otimes\pi_{\ell
-1})=0$, we will have $\tilde{e}_{i}(\pi_{\kappa_{0}}\otimes\pi_{1}%
\otimes\cdots\otimes\pi_{\ell})=0$.

Now assume $\varepsilon_{i}(\pi_{\ell})>\varphi_{i}(\pi_{\kappa_{0}}\otimes
\pi_{1}\otimes\cdots\otimes\pi_{\ell-1})$. Since $\varphi_{i}(\pi_{\kappa_{0}%
}\otimes\pi_{1}\otimes\cdots\otimes\pi_{\ell-1})\geq\kappa_{0}(i)$ and
$\pi_{\ell}\in B(\pi_{\kappa_{0}})$, we must have $\varepsilon_{i}(\pi_{\ell
})=\kappa_{0}(i)+1,\varphi_{i}(\pi_{\ell})=0$ and $\varphi_{i}(\pi_{\kappa
_{0}}\otimes\pi_{1}\otimes\cdots\otimes\pi_{\ell-1})=\kappa_{0}(i)$.
Therefore, we get ${\mathrm{wt}}(\pi_{\ell})_{i}=-\kappa_{0}(i)-1$ and
${\mathrm{wt}}(\pi_{\kappa_{0}}\otimes\pi_{1}\otimes\cdots\otimes\pi_{\ell
-1})_{i}=\kappa_{0}(i)$.\ Then (\ref{weighs2}) yields yet the contradiction
\[
-1\geq\kappa_{0}(i).
\]

\end{proof}

\bigskip

\noindent\textbf{Remark:} In general the assertion $W_{k}\in{\mathcal{C}}$ for
any $k=1,\ldots,\ell$ is not equivalent to the assertion $\kappa
_{0}+{\mathcal{W}}(t)\in\kappa_{0}+{\mathcal{C}}$ for any $t\in\lbrack
0,\ell].$ This is nevertheless true when $\kappa$ is a minuscule weight since
$\kappa_{0}=0$ in this case and the paths in $B(\pi_{\kappa})$ are lines.
\bigskip

We deduce from the previous lemma the inequality
\[
{\mathbb{P}}_{\mu}(W_{k}\in{\mathcal{C}}\ {\text{ for any }}k=0,\ldots
,\ell)\leq{\mathbb{P}}_{\mu+\kappa_{0}}({\mathcal{W}}(t)\in{\mathcal{C}%
}\ {\text{ for any }}t\in\lbrack0,\ell]).
\]
When $\ell$ tends to infinity, this yields
\[
{\mathbb{P}}_{\mu}(W_{\ell}\in{\mathcal{C}}\ {\text{ for any }}\ell\geq
1)\leq{\mathbb{P}}_{\mu+\kappa_{0}}({\mathcal{W}}(t)\in{\mathcal{C}}\ {\text{
for any }}t\geq0.
\]
By using Theorem \ref{Th_PSi}, this implies the

\begin{theorem}
Assume ${\mathfrak{g}}$ is of finite type (then $m_{\alpha}=1$ for any
$\alpha\in R_{+}$). Then, for any $\mu\in P_{+}$ we have
\[
\prod_{\alpha\in R_{+}}(1-\tau^{\alpha})S_{\mu}(\tau)\leq{\mathbb{P}}_{\mu
}(W_{\ell}\in{\mathcal{C}}\ {\text{ for any }}\ell\geq1)\leq\prod_{\alpha\in
R_{+}}(1-\tau^{\alpha})S_{\mu+\kappa_{0}}(\tau).
\]
In particular, we recover the result of Corollary 7.4.3 in \cite{LLP} :
\[
{\mathbb{P}}_{\mu}(W_{\ell}\in{\mathcal{C}}\ {\text{ for any }}\ell
\geq1)=\prod_{\alpha\in R_{+}}(1-\tau^{\alpha})S_{\mu}(\tau)
\]
when $\kappa$ is minuscule.
\end{theorem}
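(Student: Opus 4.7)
The proof essentially assembles the lemma preceding the theorem with Theorem~\ref{Th_PSi}, once we recall that $m_{\alpha}=1$ for all $\alpha\in R_{+}$ in the finite type case.

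\textbf{Lower bound.} For every trajectory, the event
$E:=(\mathcal{W}(t)\in\mathcal{C}\text{ for any }t\geq0)$ is contained in the event
$E':=(W_{\ell}\in\mathcal{C}\text{ for any }\ell\geq0)$, simply because $W_{\ell}=\mathcal{W}(\ell)$. Hence
$\mathbb{P}_{\mu}(E)\leq\mathbb{P}_{\mu}(E')$, and Theorem~\ref{Th_PSi} yields the lower bound
$\prod_{\alpha\in R_{+}}(1-\tau^{\alpha})\,S_{\mu}(\tau)\leq\mathbb{P}_{\mu}(W_{\ell}\in\mathcal{C}\ \text{for any }\ell\geq1).$

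\textbf{Upper bound.} The lemma preceding the statement shows that, under $\mathbb{P}_{\mu}$, the inclusion of events
\[
\{W_{k}\in\mathcal{C}\ \text{for }k=0,\dots,\ell\}\ \subseteq\ \{\kappa_{0}+\mathcal{W}(t)\in\mathcal{C}\ \text{for any }t\in[0,\ell]\}
\]
holds. Translating the starting point by $\kappa_{0}$ (so that the shifted walk starts at $\mu+\kappa_{0}\in P_{+}$ since $\kappa_{0}\in P_{+}$), this gives
\[
\mathbb{P}_{\mu}(W_{k}\in\mathcal{C}\ \text{for }k=0,\dots,\ell)\ \leq\ \mathbb{P}_{\mu+\kappa_{0}}(\mathcal{W}(t)\in\mathcal{C}\ \text{for any }t\in[0,\ell]).
\]
Letting $\ell\rightarrow+\infty$ and applying Theorem~\ref{Th_PSi} to the dominant weight $\mu+\kappa_{0}$ (with $m_{\alpha}=1$) produces the upper bound.

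\textbf{Minuscule case.} By definition of $m_{0}(i)$ as the maximal length of an $i$-chain in $B(\pi_{\kappa})$, the weight $\kappa$ is minuscule precisely when $m_{0}(i)=1$ for every $i$, i.e.\ $\kappa_{0}=\sum_{i}(m_{0}(i)-1)\omega_{i}=0$. In that situation the upper and lower bounds coincide, so
$\mathbb{P}_{\mu}(W_{\ell}\in\mathcal{C}\ \text{for any }\ell\geq1)=\prod_{\alpha\in R_{+}}(1-\tau^{\alpha})\,S_{\mu}(\tau),$
which is exactly Corollary~7.4.3 of \cite{LLP}.

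\textbf{Main point.} There is no serious obstacle: the real work has already been done in the lemma, whose combinatorial content is precisely the tension between the discrete conditioning on the integer times and the continuous conditioning on the whole trajectory. The role of $\kappa_{0}$ is to absorb exactly the largest possible excursion out of $\mathcal{C}$ that a single elementary path of $B(\pi_{\kappa})$ can produce along an $i$-chain, which is why the minuscule case (where the elementary paths are straight segments $t\mapsto tw(\kappa)$ and no such excursion exists) is precisely the case where the two probabilities coincide.
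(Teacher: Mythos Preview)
Your proof is correct and follows the paper's own argument essentially verbatim: the lower bound is the trivial inclusion $E\subseteq E'$ combined with Theorem~\ref{Th_PSi}, the upper bound is the lemma (which provides exactly the inequality $\mathbb{P}_{\mu}(W_{k}\in\mathcal{C},\,k=0,\dots,\ell)\leq\mathbb{P}_{\mu+\kappa_{0}}(\mathcal{W}(t)\in\mathcal{C},\,t\in[0,\ell])$) followed by $\ell\to\infty$ and Theorem~\ref{Th_PSi}, and the minuscule case is the observation $\kappa_{0}=0$. Your closing paragraph on the role of $\kappa_{0}$ is a nice gloss but not part of the formal argument.
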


\noindent\textbf{Remark:} The inequality obtained in the previous theorem can
also be rewritten
\[
1\leq\frac{{\mathbb{P}}_{\mu}(W_{\ell}\in{\mathcal{C}}\ {\text{ for any }}%
\ell\geq1)}{{\mathbb{P}}_{\mu}({\mathcal{W}}(t)\in{\mathcal{C}}\ {\text{ for
any }}t\in\lbrack0,+\infty\lbrack)}\leq\frac{S_{\mu+\kappa_{0}}(\tau)}{S_{\mu
}(\tau)}.
\]
When $\mu$ tends to infinity, we thus have ${\mathbb{P}}_{\mu}(W_{\ell}%
\in{\mathcal{C}}$ $\forall\ell\geq1)\sim{\mathbb{P}}_{\mu}({\mathcal{W}}%
(t)\in{\mathcal{C}}{\text{ for any }}t\geq0)$ as expected.

\section{Appendix (proof of Proposition \ref{Prop_Q})}

By definition of the probability ${\mathbb{Q}}$, for any $\ell\geq1$ and any
$\mu_{0},\cdots,\mu_{\ell},\lambda\in{\mathcal{C}}$, one gets
\begin{align*}
{\mathbb{Q}}(Y_{\ell+1}=\lambda\mid Y_{\ell}=\mu_{\ell},\cdots,Y_{0}=\mu_{0})
&  =\frac{{\mathbb{Q}}(Y_{\ell+1}=\lambda,Y_{\ell}=\mu_{\ell},\ldots,Y_{0}%
=\mu_{0})}{{\mathbb{Q}}(Y_{\ell}=\mu_{\ell},\ldots,Y_{0}=\mu_{0})}\\
&  =\frac{{\mathbb{P}}(E,Y_{\ell+1}=\lambda,Y_{\ell}=\mu_{\ell},\ldots
,Y_{0}=\mu_{0})}{{\mathbb{P}}(E,Y_{\ell}=\mu_{\ell},\ldots,Y_{0}=\mu_{0}%
)}=:{\frac{N_{\ell}}{D_{\ell}}}.
\end{align*}
We first have, using the Markov property
\begin{align*}
N_{\ell}  &  ={\mathbb{P}}({\mathcal{Y}}(t)\in{\mathcal{C}}{\text{ for }}%
t\geq1,Y_{\ell+1}=\lambda,Y_{\ell}=\mu_{\ell},\ldots,Y_{0}=\mu_{0})\\
&  ={\mathbb{P}}({\mathcal{Y}}(t)\in{\mathcal{C}}{\text{ for }}t\geq\ell+1\mid
Y_{\ell+1}=\lambda,{\mathcal{Y}}(t)\in{\mathcal{C}}{\text{ for }}t\in
\lbrack0,\ell+1],Y_{\ell}=\mu_{\ell},\ldots,Y_{0}=\mu_{0})\\
&  \ \qquad\qquad\qquad\times\quad{\mathbb{P}}(Y_{\ell+1}=\lambda
,{\mathcal{Y}}(t)\in{\mathcal{C}}{\text{ for }}t\in\lbrack0,\ell+1],Y_{\ell
}=\mu_{\ell},\ldots,Y_{0}=\mu_{0})\\
&  ={\mathbb{P}}({\mathcal{Y}}(t)\in{\mathcal{C}}{\text{ for }}t\geq\ell+1\mid
Y_{\ell+1}=\lambda)\\
&  \ \qquad\qquad\qquad\times\quad{\mathbb{P}}(Y_{\ell+1}=\lambda
,{\mathcal{Y}}(t)\in{\mathcal{C}}{\text{ for }}t\in\lbrack0,\ell+1[,Y_{\ell
}=\mu_{\ell},\ldots,Y_{0}=\mu_{0})\\
&  ={\mathbb{P}}({\mathcal{Y}}(t)\in{\mathcal{C}}{\text{ for }}t\geq0\mid
Y_{0}=\lambda)\\
&  \ \qquad\qquad\qquad\times\quad{\mathbb{P}}(Y_{\ell+1}=\lambda
,{\mathcal{Y}}(t)\in{\mathcal{C}}{\text{ for }}t\in\lbrack0,\ell+1[,Y_{\ell
}=\mu_{\ell},\ldots,Y_{0}=\mu_{0})
\end{align*}
with \newline${\mathbb{P}}(Y_{\ell+1}=\lambda,{\mathcal{Y}}(t)\in{\mathcal{C}%
}{\text{ for }}t\in\lbrack0,\ell+1[,Y_{\ell}=\mu_{\ell},\ldots,Y_{0}=\mu
_{0})$
\begin{align*}
\  &  ={\mathbb{P}}(Y_{\ell+1}=\lambda,{\mathcal{Y}}(t)\in{\mathcal{C}}{\text{
for }}t\in\lbrack\ell,\ell+1[\mid{\mathcal{Y}}(t)\in{\mathcal{C}}{\text{ for
}}t\in\lbrack0,\ell\lbrack,Y_{\ell}=\mu_{\ell},\ldots,Y_{0}=\mu_{0})\\
&  \ \qquad\qquad\times{\mathbb{P}}({\mathcal{Y}}(t)\in{\mathcal{C}}{\text{
for }}t\in\lbrack0,\ell\lbrack,Y_{\ell}=\mu_{\ell},\ldots,Y_{0}=\mu_{0})\\
&  ={\mathbb{P}}(Y_{\ell+1}=\lambda,{\mathcal{Y}}(t)\in{\mathcal{C}}{\text{
for }}t\in\lbrack\ell,\ell+1[\mid Y_{\ell}=\mu_{\ell})\times{\mathbb{P}%
}({\mathcal{Y}}(t)\in{\mathcal{C}}{\text{ for }}t\in\lbrack0,\ell
\lbrack,Y_{\ell}=\mu_{\ell},\ldots,Y_{0}=\mu_{0}).
\end{align*}
We therefore obtain
\begin{align*}
N_{\ell}  &  ={\mathbb{P}}(E\mid Y_{0}=\lambda)\times{\mathbb{P}}(Y_{\ell
+1}=\lambda,{\mathcal{Y}}(t)\in{\mathcal{C}}{\text{ for }}t\in\lbrack\ell
,\ell+1]\mid Y_{\ell}=\mu_{\ell})\\
&  \ \qquad\qquad\qquad\qquad\qquad\times{\mathbb{P}}({\mathcal{Y}}%
(t)\in{\mathcal{C}}{\text{ for }}t\in\lbrack0,\ell\lbrack,Y_{\ell}=\mu_{\ell
},\ldots,Y_{0}=\mu_{0}).
\end{align*}
A similar computation yields
\[
D_{\ell}={\mathbb{P}}(E\mid Y_{\ell}=\mu_{\ell}]\times{\mathbb{P}%
}[{\mathcal{Y}}(t)\in{\mathcal{C}}{\text{ for }}t\in\lbrack0,\ell
\lbrack,Y_{\ell}=\mu_{\ell},\ldots,Y_{0}=\mu_{0}).
\]
Finally, we get
\begin{align*}
{\mathbb{Q}}(Y_{\ell+1}=\lambda\mid Y_{\ell}=\mu_{\ell},\cdots,Y_{0}=\mu_{0})
&  =\\
\qquad\qquad{\mathbb{P}}(Y_{\ell+1}=\lambda,{\mathcal{Y}}(t)\in{\mathcal{C}}
&  {\text{ for }}t\in\lbrack\ell,\ell+1]\mid Y_{\ell}=\mu_{\ell})\times
\frac{{\mathbb{P}}(E\mid Y_{0}=\lambda)}{{\mathbb{P}}(E\mid Y_{0}=\mu_{\ell}%
)}.
\end{align*}

\bigskip

\noindent Laboratoire de Math\'{e}matiques et Physique Th\'{e}orique (UMR CNRS
7350)\newline Universit\'{e} Fran\c{c}ois-Rabelais, Tours \newline
F\'{e}d\'{e}ration de Recherche Denis Poisson - CNRS\newline Parc de
Grandmont, 37200 Tours, France. \newline

\noindent{cedric.lecouvey@lmpt.univ-tours.fr}\newline%
{emmanuel.lesigne@lmpt.univ-tours.fr}\newline{marc.peigne@lmpt.univ-tours.fr}

\end{document}